\theoremstyle{plain}
\newtheorem{assumption}{Assumption}
\numberwithin{equation}{section}
\newcommand{\esssup}{\operatorname{esssup}}
\newcommand{\essinf}{\operatorname{essinf}}
\renewcommand{\Cap}{\operatorname{Cap}}
\newcommand{\loc}{\mathrm{loc}}
\newcommand{\grad}{\nabla}
\newcommand{\inner}[2]{\left( #1 , #2 \right)}
\begin{document}


\title{Widder's representation theorem for symmetric local Dirichlet
spaces} 
\author{Nathaniel Eldredge and Laurent Saloff-Coste}
\maketitle

\begin{abstract}
In classical PDE theory, Widder's theorem gives a representation for
nonnegative solutions of the heat equation on $\R^n$.  We show that an
analogous theorem holds for local weak solutions of the canonical
``heat equation'' on a symmetric local Dirichlet space satisfying a
local parabolic Harnack inequality.
\end{abstract}

\tableofcontents

\section{Introduction}

The purpose of this article is to give an extension of Widder's
theorem \cite{widder44}, which gives a representation for nonnegative
solutions of the heat equation, to the general setting of symmetric,
strictly local, regular Dirichlet spaces.

To motivate the theorem, consider the Cauchy problem for the classical
heat equation on $\R^n$:
\begin{equation}\label{classical-cauchy}
  \begin{split}
    \partial_t u(t,x) - \frac{1}{2} \Delta u(t,x) &=
    0, \quad x \in \R^n,\, t > 0 \\
    u(0,x) &= f(x), \quad x \in \R^n.
  \end{split}
\end{equation}
When (\ref{classical-cauchy}) is introduced in an elementary PDE course, an
immediate question is whether solutions of (\ref{classical-cauchy})
are unique.  The answer, of course, is no: explicit examples of
nonzero functions $u$ satisfying (\ref{classical-cauchy}) with $f=0$
are known, and can be found in \cite{rosenbloom-widder} and references
therein.  However, such counterexamples generally have some sort of
bad behavior.  For instance, they grow rapidly as $\abs{x} \to
\infty$; if one requires certain growth conditions, such as the
requirement that 
\begin{equation} \label{growth}
\abs{u(t,x)} \le C e^{c \abs{x}^2},
\end{equation}
then it is well known that there is a unique solution of
(\ref{classical-cauchy}) which satisfies (\ref{growth}).  See,
e.g. \cite[Theorem 2.3.6]{evans-pde-book}.

Another sort of bad behavior that these counterexamples exhibit is
that they are unbounded below.  If we think of the heat equation as a
model for heat flow, then such solutions are non-physical, since
temperatures cannot be less than absolute zero.  Thus, it is natural
to restrict our attention to \emph{nonnegative} solutions of
(\ref{classical-cauchy}), and ask whether this is sufficient to
ensure uniqueness.  In 1944, D.~Widder showed in \cite{widder44} that
the answer is affirmative.  This is a satisfying result, since the
hypothesis of nonnegativity seems more natural and less arbitrary than
growth conditions such as (\ref{growth}).

Widder also showed that every nonnegative solution $u$ of the
classical heat equation in $\R^n$ for times $t > 0$ is of the form
\begin{equation}\label{classical-rep}
  u(t,x) = P_t \nu(x) := \int_{\R^n} p(t,x,y) \nu(dy)
\end{equation}
for some unique positive Radon measure $\nu$, where $p(t,x,y) =
\frac{1}{(2 \pi t)^{n/2}} e^{-|x-y|^2/2t}$ is the classical heat
kernel.  One can interpret this result as saying that any nonnegative
solution of the heat equation for times $t > 0$ must have evolved from
some initial temperature distribution $\nu$ (which may be singular).
This result was later extended to nonnegative classical solutions
\cite{krzyzanski64} and weak solutions \cite{aronson68,aronson68add}
of more general second-order parabolic equations on $\R^n$.

To extend Widder's result to more general spaces than $\R^n$, one must
first notice that uniqueness of nonnegative solutions of the Cauchy
problem may fail.  For example, if $\Omega$ is a bounded open subset
of $\R^n$, then there are many nonnegative classical solutions of
\begin{equation}\label{classical-cauchy-omega}
  \begin{split}
    \partial_t u(t,x) - \frac{1}{2} \Delta u(t,x) &=
    0, \quad x \in \Omega,\, t > 0 \\
    u(0,x) &= f(x), \quad x \in \R^n.
  \end{split}
\end{equation}
For instance, we might take
\begin{equation*}
  u(t,x) = 
  \begin{cases}
    p(t-t_0, x, x_0), & t > t_0 \ge 0 \\
    0, & t \le t_0
  \end{cases}
\end{equation*}
for some $x_0 \notin \Omega$.  The problem is that
(\ref{classical-cauchy-omega}) does nothing to exclude the possibility
of heat entering through the boundary of $\Omega$.  Thus we certainly
cannot hope to represent a solution $u$ as $u(t,x) = P_t \nu(x)$.  A
more appropriate representation for $u$ might be
\begin{equation}\label{rep}
  u(t,x) = P_t \nu(x) + h(t,x)
\end{equation}
where $h$ is another nonnegative solution of the heat equation which
vanishes at time $0$.  This is the type of result we shall prove in
the present paper.  We shall return to the question of uniqueness in
Section \ref{unique-sec}.

In 1992, Ancona and Taylor \cite{ancona-taylor} proved a
representation result of the form (\ref{rep}) in the language of
abstract potential theory, where solutions of a parabolic equation on
a manifold are considered as a sheaf of functions satisfying certain
properties.  A key ingredient in their proof is the assumption that
solutions satisfy a parabolic Harnack inequality.  It is worth noting
that their results are able to include solutions to equations of the
form $\partial_t u(t,x) - L u(t,x) = 0$ where the
second-order operator $L$ need not be elliptic but can be
hypoelliptic.

In recent years, attention has focused on the notion of Dirichlet
spaces (see section \ref{basic-def-sec}) as a setting for the study of
potential theory.  In this setting, one takes as the underlying space
a metric measure space or similar object; in particular, no
differentiable structure is assumed.  However, the space carries
enough structure to allow one to define a notion of a solution to a
canonical ``Laplace equation'' or ``heat equation,'' and in particular to
study functional inequalities for such solutions, such as Harnack and
Poincar\'e inequalities.  Since a Dirichlet space also carries a
canonical stochastic process, one is also able to exploit tools from
probability and probabilistic potential theory.  

The main result of this paper is to prove that so-called local weak
solutions of the heat equation on a Dirichlet space, under certain
assumptions, are given by a representation of the form (\ref{rep}).
The proof is along similar lines to that of \cite{ancona-taylor}, but
the details are quite different.  Along the way, we shall obtain
several useful properties of local weak solutions.

For related results in a variety of other settings, see
\cite{mair-taylor,
saloff-coste-uniformly-elliptic, 
mendez-murata,
murata-2007}.

\section{Definition of local weak solutions}\label{lws-def-sec}\label{basic-def-sec}

Let $(X,d,\mu)$ be a \textbf{metric measure space}: $(X,d)$ is a
metric space, and $\mu$ is a positive Radon measure on $X$.  We
further assume that $X$ is separable and locally compact; it follows
that $X$ is Polish and that every finite Borel measure is
automatically Radon.

Let $(\mathcal{E},\mathbb{D})$ be a \textbf{Dirichlet form} on
$L^2(X,\mu)$: a closed, Markovian, positive, bilinear form.  (We refer
the reader to \cite{fukushima-et-al} for the definition of a Dirichlet
form and of the following properties.)  The quintuple
$(X,d,\mu,\mathcal{E},\mathbb{D})$ will be called a \textbf{Dirichlet
space}.  We assume that $(\mathcal{E},\mathbb{D})$ is
\textbf{symmetric}, \textbf{regular}, and \textbf{strictly local}.  We
shall write $\mathcal{E}(f)$ for $\mathcal{E}(f,f)$.

To each $f \in \mathbb{D} \cap L^\infty$ there is associated a Radon
measure $\Gamma(f)$, called the \textbf{energy measure} of $f$,
defined by
\begin{equation}
  \int_X \phi \,d\Gamma(f) = 2 \mathcal{E}(\phi f, f) -
  \mathcal{E}(f^2, \phi)
\end{equation}
for $\phi \in C_c(X) \cap \mathbb{D}$, where $C_c(X)$ denotes the
space of continuous, compactly supported functions on $X$.  (It is
worth recalling that $\mathbb{D} \cap L^\infty$ is an algebra; see
\cite[Theorem 1.4.2 (ii)]{fukushima-et-al}.)  One may then define the
signed measure $\Gamma(f,g) = \frac{1}{2}(\Gamma(f+g) - \Gamma(f) -
\Gamma(g))$ by polarization.  For the classical Dirichlet form on
$\R^n$, we have $d\Gamma(f,g) = \grad f \cdot \grad g \,dm$.  We have
collected some useful properties of $\Gamma$ in Appendix
\ref{app-energy-measure}; further information can be found in
\cite{fukushima-classic}.

Let $L$ denote the self-adjoint generator of
$(\mathcal{E},\mathbb{D})$, with its domain $D(L)$.  We will take the
sign convention that $L$ is a \emph{positive} semidefinite operator,
so that $\mathcal{E}(f,g) = \inner{f}{Lg}_{L^2(X,\mu)}$ for $f \in
\mathbb{D}$, $g \in D(L)$.  Let $P_t$ denote the associated strongly
continuous, symmetric, Markovian semigroup.

Note that since $(\mathcal{E},\mathbb{D})$ is closed, $\mathbb{D}$ is
a Hilbert space under the inner product $\mathcal{E}_1(f,g) =
\mathcal{E}(f,g) + \int_X fg\,d\mu$.  (As before, $\mathcal{E}_1(f)$
is short for $\mathcal{E}_1(f,f)$.)  The inclusion $\mathbb{D}
\hookrightarrow L^2(X,\mu)$ is obviously continuous, 1-1, and has
dense image, so taking its adjoint gives another inclusion $L^2(X,\mu)
\hookrightarrow \mathbb{D}^*$ which is also continuous, 1-1, and has
dense image.  (So $(\mathbb{D}, L^2(X,\mu), \mathbb{D}^*)$ is a
Gelfand triple.)  Under these identifications, the pairing $(\ell,
f)_{\mathbb{D}^*, \mathbb{D}}$ is given by $\int_X \ell(x) f(x)
\mu(dx)$ when $\ell \in L^2(X,\mu) \subset \mathbb{D}^*$.  We will try
to keep denoting this pairing by $(\ell, f)_{\mathbb{D}^*,
  \mathbb{D}}$, but it would be a permissible abuse of notation to
just write $\int_X \ell(x) f(x) \mu(dx)$ in all cases.

In the classical case when $X = \R^n$, $\mu=m$ is Lebesgue measure and
$\mathcal{E}(f,g) = \frac{1}{2} \int_{\R^n} \grad f \cdot \grad g\,dm$
on the domain $\mathbb{D} = H^1(\R^n)$ (so $L = -\frac{1}{2}\Delta$)
we have $\mathbb{D}^* = H^{-1}(\R^n)$.  So it is helpful to think of
$\mathbb{D}^*$ as some space of distributions on $X$.

We note that the assumption that $(\mathcal{E},\mathbb{D})$ is regular
may impose some implicit ``boundary conditions'' on functions $f \in
\mathbb{D}$.  For example, if $X = \Omega$ is a bounded open subset of
$\R^n$ and $\mathcal{E}$ is the classical Dirichlet form, then
$\mathbb{D} = H^1_0(\Omega)$, so functions in the domain $\mathbb{D}$
must satisfy Dirichlet boundary conditions on $\partial \Omega$.





Intuitively, we want to consider solutions $u : (0,T) \times X \to \R$
of the heat equation
\begin{equation}
  \label{heat-eqn}
  \partial_t u + L u = 0.
\end{equation}
However, the implicit assumption in (\ref{heat-eqn}) that $u(t, \cdot)
\in D(L)$ for each $t$ is much too strong.  In particular, it is a
global condition: it requires that $u(t,\cdot)$ is in $L^2(X,\mu)$ and
satisfies certain boundary conditions.  We want something more
analogous to the classical heat equation
\begin{equation}
  \label{classical-heat-eqn}
  \partial_t u - \frac{1}{2} \Delta u = 0
\end{equation}
on an open set $\Omega \subset \R^n$, which is only a local equation.
It makes sense for any $u \in C^2$, for instance, and imposes no
global conditions such as integrability or behavior at the boundary.

The resulting notion of ``local weak solution'' has a more complicated
definition than (\ref{heat-eqn}), but comes closer to the intuition of
(\ref{classical-heat-eqn}) in that it avoids global conditions.

\begin{notation}
  Let $L^2([a,b]; \mathbb{D})$ denote the Hilbert space of (strongly
   measurable) square-integrable vector-valued functions $u : [a,b]
   \to \mathbb{D}$ under the norm
  \begin{equation*}
    \norm{u}_{L^2([a,b];\mathbb{D})}^2 := \int_a^b \mathcal{E}_1(u(t), u(t))\,dt.
  \end{equation*}
    $m$ will denote Lebesgue measure on the
  time interval $[a,b]$.  
\end{notation}

\begin{notation}
  Let $W^{1,2}([a,b]; \mathbb{D}, \mathbb{D}^*)$ denote the
  vector-valued Sobolev space of functions $u \in L^2([a,b];
  \mathbb{D})$ with one time derivative $u' \in L^2([a,b];
  \mathbb{D}^*)$.  We will write $W^{1,2}$ for short.  $W^{1,2}$ is a
  Hilbert space under the norm
  \begin{equation*}
    \norm{u}_{W^{1,2}([a,b]; \mathbb{D},\mathbb{D}^*)}^2 := \int_a^b
    (\mathcal{E}_1(u(t), u(t)) + \norm{u'(t)}_{\mathbb{D}^*}^2)\,dt.
  \end{equation*}
\end{notation}

A standard ``Sobolev embedding theorem'' gives that $W^{1,2}([a,b];
\mathbb{D}, \mathbb{D}^*) \subset C([a,b]; L^2(X,\mu))$, so that
$u(t)$ is well-defined as an element of $L^2(X,\mu)$ for every $t \in
[a,b]$, and $u$ is also well-defined as an element of $L^2([a,b]
\times X)$.  Also, we recall the following ``product rule'': if $u \in W^{1,2}$, then $t
\mapsto \norm{u(t)}_{L^2(X,\mu)}^2$ is an absolutely continuous
function on $[a,b]$, and
\begin{equation}\label{product-rule}
  \frac{d}{dt} \norm{u(t)}_{L^2(X,\mu)}^2 = \inner{u'(t)}{u(t)}_{\mathbb{D}^*,\mathbb{D}}.
\end{equation}
We refer the reader to \cite[\S 25]{wloka-book} for proofs.

One can show that any function $u :[a,b] \to \mathbb{D}$ which is in
$W^{1,2}$ is represented by an (almost everywhere defined) function
from $[a,b] \times X$ to $\R$, so we may write either $u(t)$ or
$u(t,x)$ depending on whether we prefer to think of $u$ as a curve in
a function space or a real-valued function of time and space.

\begin{definition}[See, e.g., \cite{sturm-dirichlet-ii,sturm-dirichlet-iii}]
\label{lws-def}
  Let $U \subset X$ be open.  A function $u : (0,T) \times U \to \R$
  is said to be a \textbf{local weak solution} (of the heat equation
  \ref{heat-eqn}) if for every compact $K \subset (0,T) \times U$,
  there exists a function $u_K \in W^{1,2}([0,T]; \mathbb{D},
  \mathbb{D}^*)$ such that $u = u_K$ ($m \times \mu$-a.e.) on $K$, and such that for every
  $\phi \in W^{1,2}([0,T]; \mathbb{D}, \mathbb{D}^*)$ with compact
  support inside $K$, we have
  \begin{equation}\label{lws-eq}
    \int_0^T (u_K'(t), \phi(t))_{\mathbb{D}^*, \mathbb{D}}\,dt +
    \int_0^T \mathcal{E}(u_K(t), \phi(t))\,dt = 0.
  \end{equation}
\end{definition}

Note that, because the Dirichlet form $(\mathcal{E},\mathbb{D})$ is
local, the expression in (\ref{lws-eq}) does not depend on the choice
of $u_K$.  Therefore, for test functions $\phi$ which are compactly
supported inside $(0,T) \times U$, we can interpret an expression like
$\int_0^T \mathcal{E}(u(t), \phi(t))\,dt$ as shorthand for $\int_0^T
\mathcal{E}(u_K(t), \phi(t))\,dt$ where $K \subset (0,T) \times U$ is
any compact set containing the support of $\phi$.  We shall henceforth
commit this abuse of notation.

We also note that the left side of (\ref{lws-eq}) is continuous in
$\phi$ with respect to the $L^2([0,T]; \mathbb{D})$ topology.
Therefore, since $W^{1,2}([0,T]; \mathbb{D},\mathbb{D}^*)$ is dense in
$L^2([0,T]; \mathbb{D})$, (\ref{lws-eq}) holds for all $\phi \in
L^2([0,T]; \mathbb{D})$ which have compact support inside $K$.

Observe that we do not assume that $u(t) \in \mathbb{D}$ or even $u(t)
\in L^2(X,\mu)$.  This definition gives no control on the behavior of
$u$ ``at infinity,'' i.e. away from compact sets, and $u$ need not
satisfy any implicit Dirichlet boundary conditions.  Each $u_K$ does
need to satisfy them, but $u$ can be totally different from $u_K$
outside the compact set $K$.  
There is also no assumption as to what happens near $t=0$ and $t=T$.

\section{Assumptions}\label{assumptions-sec}

In this section, we collect, and discuss, the hypotheses under which
we shall prove our results.

\begin{assumption}\label{dirichlet-space-ass}
  $(X,d)$ is a separable, locally compact, connected, locally connected metric
  space.  $\mu$ is a positive Radon measure on $X$ with full support.
  $(\mathcal{E},\mathbb{D})$ is a symmetric, regular, strictly local
  Dirichlet form on $L^2(X,\mu)$.
\end{assumption}


We shall now assume that local weak solutions satisfy a parabolic
Harnack inequality.  This inequality was developed by Moser, extending
previous work of Hadamard and Pini; see
\cite{moser64,moser64-correction} and references therein for further
history.


\begin{assumption}\label{harnack}
  Nonnegative local weak solutions satisfy the following
  \textbf{parabolic Harnack inequality}.  Let $U \subset X$ be open
  and connected, let $K \subset U$ be compact, and fix $0 < a < b
  < c < d < T$.  There exists a finite constant $C = C(U,K,a,b,c,d)$
  such that for every nonnegative local weak solution $u$ on $(0,T)
  \times U$,
  \begin{equation}
    \esssup_{[a,b] \times K} u \le C \essinf_{[c,d] \times K} u.
  \end{equation}
\end{assumption}

We make no assumption as to the exact dependence of $C$ on
$U,K,a,b,c,d$, so this Harnack inequality is local and in no way
scale-invariant.

In the context of local Dirichlet spaces, various forms of the
parabolic Harnack inequality are known to be related to other
functional inequalities, such as Poincar\'e inequalities and heat
kernel estimates.  See, for example,
\cite{sturm-dirichlet-ii,sturm-dirichlet-iii,bbk06}.  We shall discuss
this further in Section \ref{example-sec}.

\begin{assumption}\label{cts-heat-kernel}
  The semigroup $P_t$ admits a continuous heat kernel.  That is, there is a
  continuous $p : (0,\infty) \times X \times X \to \R$ such that, for
  all $f \in L^2(X,\mu)$,
  \begin{equation*}
    P_t f(x) = \int_X p(t,x,y) f(y) \mu(dy) \quad \text{for $\mu$-a.e
    $x \in X$}.
  \end{equation*}
\end{assumption}

Since the semigroup $P_t$ is Markovian, it follows that $p \ge 0$.  In
fact, since $u(t,x) = p(t, x_0, x)$ is a local weak solution, the
Harnack inequality (Assumption \ref{harnack}) implies that $p > 0$.
We will make use of this later.


\begin{notation}
  For any positive Radon measure $\nu$ on $X$, set
  \begin{equation}\label{Pt-nu-def}
    P_t \nu (x) = \int_X p(t,x,y) \nu(dy).
  \end{equation}
\end{notation}

Note that the continuity of $p$ is needed to ensure that $P_t \nu$ is
well-defined for measures $\nu$ which are not necessarily absolutely
continuous to $\mu$.

\begin{assumption}\label{Pt-cts-pointwise}
  For $f \in C_c(X)$ and every $x \in X$, we have $P_t f(x) \to f(x)$
  pointwise as $t \to 0$.  (Note that since the heat kernel $p$ is
  assumed to be continuous, $P_t f$ is continuous and hence $P_t f(x)$
  is well-defined for every $x \in X$.)
\end{assumption}

Assumption \ref{Pt-cts-pointwise} is very mild.  Indeed, we are not
aware of any examples which satisfy our previous assumptions but not
Assumption \ref{Pt-cts-pointwise}; however, we have not been able to
show that Assumption \ref{Pt-cts-pointwise} follows from them.

We emphasize that in Assumption \ref{Pt-cts-pointwise}, we require
that the convergence hold for \emph{every} $x \in X$.  Under our other
assumptions, the convergence will certainly hold for
\textbf{quasi-every} $x \in X$ (i.e. except on a set which, in the
terminology of \cite{fukushima-et-al}, is \textbf{zero capacity}, or
equivalently is \textbf{exceptional}).  The Dirichlet form
$(\mathcal{E},\mathbb{D})$ is associated with a continuous Hunt
process $X_t$ which is \textbf{normal} in the sense that
$\mathbb{P}_x(X_0 = x) = 1$.  So by dominated convergence, we have
$\mathbb{E}_x [f(X_t)] \to f(x)$ for every $x \in X$.  On the other
hand, it also holds that for quasi-every $x$, $P_t f(x) = \mathbb{E}_x
[f(X_t)]$ for every $t > 0$ and every $f \in C_c(X)$.

The Hunt process $X_t$ is not unique (one can change its behavior on
an exceptional set of initial points), and in many cases, there is a
canonical choice of $X_t$ for which $P_t f(x) = \mathbb{E}_x [f(X_t)]$
holds everywhere.  One could circumvent Assumption
\ref{Pt-cts-pointwise} by taking as the starting point a particular
continuous Hunt process $X_t$ with a continuous transition density
$p(t,x,y)$ and taking $(\mathcal{E},\mathbb{D})$ to be the
corresponding Dirichlet form.


It is also worth noting that Assumption \ref{Pt-cts-pointwise} holds
automatically in the common case that $P_t$ is a Feller semigroup,
i.e. a strongly continuous semigroup on $C_0(X)$.  In this case the
convergence holds not only pointwise but uniformly.  Another situation
where the assumption holds is when the state space $X$ is a
homogeneous space such as a Lie group, and the Dirichlet form is
translation invariant.

In Definition \ref{lws-def}, the existence of the ``localized''
functions $u_K$ is merely asserted, and we have little control over
them.  To produce them more constructively, we would like to use some
``smooth'' cutoff functions, as follows.

\begin{definition}\label{cutoff-def}
  Let $U$ be open and $K \subset U$ compact.  We will say a function
  $\psi : X \to [0,1]$ is a \textbf{nice cutoff function for $K$
  inside $U$} if:
  \begin{enumerate}
    \item $\psi = 1$ on $K$;\label{cutoff-1-on-K}
    \item $\psi$ is compactly supported inside $U$;\label{cutoff-inside-U}
    \item For every $f \in \mathbb{D}$, $\psi f \in \mathbb{D}$.  (It
    follows from the closed graph theorem that there is a constant
    $C_\psi$ such that $\mathcal{E}_1(\psi f) \le
    C_\psi \mathcal{E}_1(f)$.  And in particular, $\psi \in \mathbb{D}$.)
    \label{cutoff-preserve-domain}
  \end{enumerate}
\end{definition}

\begin{definition}\label{nice-def}
  We shall say an open set $U \subset X$ is \textbf{nice} if for every
  compact $K \subset U$ there exists a nice cutoff function for $K$
  inside $U$.
\end{definition}



\begin{assumption}\label{union-of-nice}
  There is a sequence of open, precompact, connected, nice sets $U_n \uparrow X$.
\end{assumption}

Some sufficient conditions for this assumption to hold are as follows:

\begin{enumerate}
\item If $X$ is compact, then $X$ itself is nice, since $\psi = 1$ is
  a nice cutoff function of any closed set inside $X$.  So we can take $U_n = X$.
  Actually, when $X$ is compact, several of the arguments in this
  paper become trivial.

\item If the intrinsic pseudo-distance defined by (\ref{intrinsic-d})
  below is a genuine metric and generates the topology of $X$, then
  every open set is nice: we can use the metric to construct cutoff
  functions that are Lipschitz.  This is a common assumption in the
  theory of Dirichlet spaces.  See Section \ref{sec-id} for more on
  this notion.

  This reduces Assumption \ref{union-of-nice} to the statement that
  we can exhaust $X$ by open, precompact, connected sets (which we
  shall abbreviate OPC for this paragraph).  But this follows from our
  topological assumptions on $X$.  Indeed, if we write $x \sim y$
  whenever there is an OPC set containing $x,y$, then by local
  compactness and local connectedness $\sim$ is an equivalence
  relation on $X$, and every equivalence class is open.  But then by
  connectedness of $X$ there is only one equivalence class.  Thus any
  pair of points, and moreover any finite set of points, is contained
  in an OPC set.  The same holds for any compact set $K$: we can cover
  $K$ by a finite number of OPC sets $U_n$.  Picking one $x_n$ from
  each $U_n$, we can find an additional OPC set $U_0$ which contains all the
  $x_n$.  Now the union of $U_0$ and the $U_n$ is an OPC set which
  contains $K$.  Since $X$ is $\sigma$-compact, this suffices.

\item If the Dirichlet space satisfies a local cutoff Sobolev
  inequality $\mathrm{CS}(\beta)_{\mathrm{loc}}$ in the sense of
  \cite{bbk06}, then all open sets are nice, and so Assumption
  \ref{union-of-nice} holds.  See Appendix \ref{cutoff-sobolev}.  Note
  that $\mathrm{CS}(\beta)_{\mathrm{loc}}$ is shown in \cite{bbk06} to
  follow from a uniform parabolic Harnack inequality.  The authors
  discuss examples of spaces, such as certain fractals, that are known
  to satisfy this condition, although they do not have a well-behaved
  intrinsic distance.  See also Section \ref{sec-fractal}.
\end{enumerate}

The useful consequence of Assumption \ref{union-of-nice} is that if
$U$ is nice and $u$ is a local weak solution on $(0,T) \times U$, then
for compact $K \subset (0,T) \times U$, we can produce the function
$u_K$ explicitly as follows.  We may enclose $K$ inside some set
$[a,b] \times K_1$ for a compact $K_1 \subset U$.  Let $\psi$ be a
nice cutoff function for $K_1$ inside $U$, and let $\chi \in
C^\infty_c((0,T))$ be a cutoff function which equals $1$ on $[a,b]$.
Set $K' = \supp \chi \times \supp \psi$ and let $u_K(t,x) = \chi(t)
\psi(x) u_{K'}(t,x)$.  This gives the same function $u_K$ no matter
which of the many possible choices for $u_{K'}$ is used, and it is not
hard to see that this $u_K$ is in $W^{1,2}([0,T]; \mathbb{D},
\mathbb{D}^*)$.

\section{Examples}\label{example-sec}

\if 
Examples of spaces satisfying the previous hypotheses.

General class of examples: volume doubling and local Poincar\'e
inequality.

Any non-examples?

We make no assumptions as to how $C$ depends on $U,K, a,b,c,d$.  A
stronger assumption would be that $(X, \mathcal{E},\mathbb{D})$ is a
\textbf{Harnack-type Dirichlet space} as considered in
\cite{gyrya-saloff-coste}, \cite{sturm-dirichlet-i}, in which a
\emph{uniform} parabolic Harnack inequality holds.  This property is
equivalent to having upper and lower heat kernel estimates, and to
having volume doubling and a Poincar\'e inequality.

Also sufficient to satisfy Assumption \ref{harnack} is the parabolic
Harnack inequality $\mathrm{PHI}(\Psi)$ considered in \cite{bbk06}, or
its local version.  Again, it is known that this inequality holds for
certain types of fractals.

\fi

This section discusses examples of spaces satisfying the assumptions listed in 
Section \ref{assumptions-sec}. As long 
as one is interested in local self-adjoint Markov operators defined over a 
locally compact metric space, it should be clear that these assumptions 
are quite mild and are satisfied for a great many 
interesting examples. We will work under the very basic Assumption 
\ref{dirichlet-space-ass} and focus on various contexts where the 
other assumptions listed in Section \ref{assumptions-sec} are satisfied. 

\subsection{Intrinsic distance} \label{sec-id}
Under Assumption \ref{dirichlet-space-ass},
the {\bf intrinsic distance} $d_{\mathcal E}$ is defined by
\begin{equation}\label{intrinsic-d}
d_{\mathcal E}(x,y)=\inf\{ f(y)-f(x): f\in  C_c(X)\cap
\mathbb D, d\Gamma(f)\le d\mu\},\;\;x,y\in X.\end{equation} Here
$d\Gamma(f)\le d\mu$ means that the Radon measure $\Gamma(f)$ is
absolutely continuous with respect to $\mu$ with Radon--Nikodym
derivative bounded by $1$.  The function $(x,y)\mapsto d_{\mathcal
E}(x,y)$ may be $0$ even if $x\neq y$ or may be infinity for some
$x,y$ but is symmetric and satisfies the triangle inequality. Loosely
speaking, the condition $d\Gamma(f)\le d\mu$ can be understood as
requiring that $f$ is ``Lipschitz'' with constant $1$. On a complete
Riemannian manifold equipped with its natural Dirichlet space
structure, $d_{\mathcal E}$ equals the Riemannian distance. 
See \cite{sturm-dirichlet-i, sturm-1998, biroli-mosco-saint-venant}.

Fractals such as the Sierpinski 
gasket and carpet provide examples of Dirichlet spaces with very 
interesting properties but where the intrinsic distance $d_\mathcal E$ is 
identically $0$ (the only functions with ``bounded gradient'' are the constant 
functions).  The infinite dimensional torus $\mathbb T^\infty$, 
the countable product of circles with its normalized Haar measure $\mu$,
equipped with  the Dirichlet form $\mathcal E_A $ associated with an infinite 
symmetric positive definite matrix $A=(a_{i,j})$,  
$$\mathcal E_A(f)=\int_{\mathbb T^\infty}
\sum_{i,j} a_{i,j}\partial_if \partial_jf d\mu,$$ 
provide examples where, depending on $A$,
the intrinsic distance may be finite and continuous or 
infinite except on a dense set of measure $0$. 
See \cite{bendikov-saloff-coste-diagonal}.

We define the balls $B(x,r)$ and the \textbf{volume growth function}
$V(x,r)$ by setting
$$B(x,r)=\{y: d_\mathcal E(x,y)<r\} \mbox{ and } V(x,r)=\mu (B(x,r)).$$
Please note that these balls 
are relative to the intrinsic distance $d_\mathcal E$.

Consider the following properties that may or may not be satisfied:
\begin{itemize}
\item[\textbf{(ID1)}] The intrinsic distance $d_\mathcal E$ is
  continuous and 
defines the topology of $X$.
\item[\textbf{(ID2)}] Property (ID1) is satisfied and for any compact set $K\subset X$
there are constants $r_K\in (0,\infty]$ and  $D_K,P_K\in (0,\infty)$ 
such that:
\begin{itemize}
\item (Compact balls) For all $x \in K$ and $r < r_K$, the closed ball
  $\overline{B(x, 2 r)}$ is compact.
\item (Doubling) For all any $x\in K$ and $r\in (0,r_K)$, we have $V(x,2r)\le D_KV(x,r)$.
\item (Poincar\'e inequality) For all any $x\in K$ and $r\in (0,r_K)$, 
$$\forall f\in \mathbb D,\; \int_{B(x,r)}|f-f_{B(x,r)}|^2d\mu\le P_K r^2
\int_{B(x,2r)}d\Gamma(f)$$
where $f_{B}$ denotes the mean of $f$ over $B$.
\end{itemize}
\end{itemize}
We say that \textbf{(ID2) holds locally uniformly} if there exists 
$r_0,D_0, P_0\in (0,\infty)$ such that (ID2) holds true with 
$r_{\{x\}}\ge r_0$, $D_{\{x\}}\le D_0 $ and $P_{\{x\}}\le P_0$ for all
$x \in X$.  Note that this implies that $\overline{B(x, r_0)}$ is
compact for every $x$, and in particular $d_\mathcal{E}$ is a complete metric on $X$.

We say that \textbf{(ID2) holds uniformly at all scales and positions} if there 
are constants $D_0$ and $P_0$ such that (ID2) holds with 
$r_K=\infty$, $D_K\le D_0$ and $P_K\le P_0$, for any compact $K\subset
X$.   Spaces where this holds are sometimes said to be of
\textbf{Harnack type}.  

As explained in \cite{sturm-dirichlet-i}, property (ID1) implies the
existence of an abundance of cutoff functions. In particular, under
(ID1), every open set is nice, and so Assumption \ref{union-of-nice}
is satisfied as argued in the previous section.

By the work of K.T. Sturm \cite{sturm-dirichlet-iii} extending to
Dirichlet spaces earlier work by Grigoryan \cite{grigoryan91} and
Saloff-Coste \cite{saloff-coste-psh}, the following theorem holds.

\begin{theorem}\label{th-H2}
Assuming {\em (ID1)}, property {\em (ID2)} holds if and only if
for any compact $K$ there exists a constant $H_K$ such that any nonnegative 
local weak solution $u$ of the heat equation {\em (\ref{heat-eqn})} in 
$Q= (0,r^2)\times B(x,r)$, $x\in K$, $r\in (0,r_K)$, satisfies
\begin{equation}
\esssup_{Q_-}\{u\}\le H_K \essinf_{Q_+}\{u\},
\end{equation}
where $Q_-=(r^2/4,r^2/2)\times B(x,r/2)$ and $Q_+=(3r^2/4,r^2)\times B(x,r/2)$.

Further, if {\em (ID2)} holds locally uniformly and $r_0$ is as
described above, then $r_{\{x\}}\ge r_0>0$ and 
there is a constant $H_0$  such that $H_{\{x\}}\le H_0$ for all $x\in X$. If 
{\em (ID2)} holds uniformly at all scales and locations then $r_K=\infty$
and there is a constant $H_0$ such that $H_K\le H_0$ for all 
compact $K\subset X$.
\end{theorem}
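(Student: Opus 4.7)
The plan is to reduce the statement to the known equivalence, due to Sturm \cite{sturm-dirichlet-iii} (extending Grigoryan \cite{grigoryan91} and Saloff-Coste \cite{saloff-coste-psh}), between (volume doubling $+$ $L^2$ Poincar\'e inequality) and the scale-invariant parabolic Harnack inequality for local Dirichlet spaces. The substance of the theorem lies in that prior work; what remains is to verify the purely local formulation stated here and to track the dependence of constants on the compact set $K$.

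For the forward direction (ID2) $\Rightarrow$ PHI, I would fix a compact $K \subset X$, and pass to a slightly enlarged compact set $\widetilde K$ containing (say) the $r_K$-neighborhood of $K$, shrinking $r_K$ by a harmless factor. The hypotheses then give uniform doubling and Poincar\'e on all balls $B(x,r)$ with $x \in K$, $r < r_K$, and on all their subballs. Because (ID1) is in force, $\widetilde K$ carries an abundance of Lipschitz cutoff functions, which is the only further ingredient Moser iteration requires. I would then follow Sturm's chain of implications: (a) a Caccioppoli-type energy estimate for subsolutions; (b) a parabolic mean-value inequality via $L^p$--$L^\infty$ Moser iteration; (c) a weak Harnack inequality for nonnegative supersolutions via a second Moser iteration combined with a Bombieri--Giusti or John--Nirenberg-type lemma; (d) combination of (b) and (c) on the cylinders $Q_-$ and $Q_+$ to produce the full PHI. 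The resulting constant $H_K$ depends only on $D_K$ and $P_K$.

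For the converse direction PHI $\Rightarrow$ (ID2), I would exploit Assumption \ref{cts-heat-kernel}: the function $p(\cdot,x_0,\cdot)$ is a nonnegative local weak solution on $(0,T)\times U$ for any $x_0\in U$. Applied iteratively, the scale-invariant PHI yields two-sided Gaussian-type bounds on $p(t,x,y)$ for $x,y\in K$, $t \lesssim r_K^2$. From the on-diagonal lower bound $p(r^2,x,x)\gtrsim 1/V(x,r)$ combined with the off-diagonal upper bound, one extracts volume doubling on $K$; from a standard argument (as in \cite{saloff-coste-psh}) testing heat-kernel quantities against local functions, one recovers the local Poincar\'e inequality. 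This pins down $D_K$ and $P_K$ in terms of $H_K$.

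Finally, the uniformity statements follow by inspecting the dependence of constants in both directions: since the Moser iteration produces $H_K$ as a function only of $D_K$, $P_K$, the assumption that (ID2) holds locally uniformly with constants $r_0, D_0, P_0$ yields a single constant $H_0$ valid for $r<r_0$ at every point; if (ID2) holds uniformly at all scales, one is back in the ``Harnack-type'' setting treated globally in \cite{sturm-dirichlet-i}. The main obstacle I anticipate is not conceptual but bookkeeping: one must ensure that every cutoff-function manipulation in the Moser machinery stays inside $\widetilde K$ where (ID1) supplies the needed Lipschitz cutoffs, and that no auxiliary constants (from Whitney-type coverings, chaining lemmas, telescoping sums, etc.) tacitly depend on geometry of $X$ outside $\widetilde K$ beyond the quantities $D_K$, $P_K$, $r_K$.
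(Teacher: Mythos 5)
The paper does not prove Theorem \ref{th-H2} at all: it is quoted directly from Sturm \cite{sturm-dirichlet-iii} (extending Grigoryan \cite{grigoryan91} and Saloff-Coste \cite{saloff-coste-psh}), which is exactly the equivalence you reduce to, so your proposal takes the same route as the paper, your Moser-iteration and heat-kernel sketch being the standard proof of the cited result together with the localization/constant-tracking the paper leaves implicit. The one point to adjust is that in the converse direction you invoke Assumption \ref{cts-heat-kernel}, which is not a hypothesis of this theorem; the local existence, positivity and continuity of the kernel should instead be extracted from the parabolic Harnack inequality itself (via the mean-value inequality and H\"older continuity it implies), as in the cited works, so that the equivalence stands on (ID1) alone.
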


The parabolic Harnack inequality supplied by Theorem \ref{th-H2} can
be shown to imply that of Assumption \ref{harnack}, by covering the
set $K$ that appears in Assumption \ref{harnack} with finitely many
sufficiently small overlapping balls.  Moreover, it is argued in
\cite{sturm-dirichlet-ii} that under (ID1)--(ID2) a measurable heat
kernel exists.  Another very important consequence of (ID1)--(ID2), as
shown in \cite[Corollary 3.3]{sturm-dirichlet-iii}, is that local weak
solutions are locally H\"older continuous.  In particular the heat
kernel is continuous, so Assumption \ref{cts-heat-kernel} is also
satisfied.

Using the continuity of local weak solutions, we can also show that
Assumption \ref{Pt-cts-pointwise} is satisfied.  Fix $f \in C_c(X)$,
$x_0 \in X$ and $\epsilon > 0$.  Using regularity and the Markovian
properties of $(\mathcal{E},\mathbb{D})$, we may choose $g \in C_c(X)
\cap \mathbb{D}$ with $\norm{f-g}_\infty < \epsilon$ and such that $g$
is constant on a neighborhood $U$ of $x_0$.  Since $P_t f, P_t g$ are
continuous on $(0, \infty) \times X$ and $P_t$ is Markovian, we also
have $\norm{P_t f - P_t g}_\infty < \epsilon$.  Now it is shown in
\cite[Lemma 2.28]{gyrya-saloff-coste} that if we set
\begin{equation*}
  u(t,x) = 
  \begin{cases}
    P_t g(x), & t > 0 \\
    g(x_0), & t \le 0
  \end{cases}
\end{equation*}
then $u$ is a local weak solution on $(-\infty, \infty) \times U$.
(This also follows from our extension principle, Lemma
\ref{extension}, below.)  Therefore $u$ is continuous on $(-\infty,
\infty) \times U$, and in particular $P_t g \to g(x_0)$ uniformly on
compact subsets of $U$.  Letting $\epsilon \to 0$, it follows that
$P_t f \to f$ uniformly on compact subsets of $U$, and in particular
$P_t f(x_0) \to f(x_0)$.

To summarize:

\begin{corollary} 
Let $(X,d)$ be a separable, locally compact, connected, locally connected metric
space equipped  with a positive Radon measure $\mu$  with full support
and a symmetric, regular, strictly local
Dirichlet form   $(\mathcal{E},\mathbb{D})$ on $L^2(X,\mu)$. If 
$(X,d,\mu,\mathcal{E},\mathbb{D})$ satisfies {\em (ID1)--(ID2)} then
the assumptions 
{\em \ref{harnack}, \ref{cts-heat-kernel}, \ref{Pt-cts-pointwise}}
and {\em \ref{union-of-nice}} of {\em Section \ref{assumptions-sec}}  
are all satisfied.
\end{corollary}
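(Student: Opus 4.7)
The proof is a synthesis of the discussion already carried out in Section \ref{sec-id}: I would verify each of the four assumptions in turn, relying on (ID1)--(ID2) and on Theorem \ref{th-H2}.

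First, for Assumption \ref{union-of-nice}, the plan is to use (ID1) to produce Lipschitz cutoff functions directly. Given a compact $K$ inside an open set $U$, pick $\epsilon > 0$ small enough that the $\epsilon$-neighborhood of $K$ (in $d_{\mathcal{E}}$) is relatively compact in $U$, and set $\psi(y) = \max\{0, 1 - d_{\mathcal{E}}(y,K)/\epsilon\}$. The defining property $d\Gamma(f) \le d\mu$ of the intrinsic distance, combined with a standard Leibniz-type estimate for energy measures (Appendix \ref{app-energy-measure}), shows $\psi \in \mathbb{D}$ and that $f \mapsto \psi f$ maps $\mathbb{D}$ into itself. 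Hence every open set is nice, and the topological argument spelled out in item (2) after Assumption \ref{union-of-nice} (using local compactness, local connectedness, connectedness, and $\sigma$-compactness) yields an exhaustion by open, precompact, connected, nice sets.

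Second, for Assumption \ref{harnack}, I would invoke Theorem \ref{th-H2} to obtain the scale-adapted Harnack inequality on cylinders $(0,r^2) \times B(x,r)$ with $x \in K$, $r < r_K$, and chain it. Given $U$, $K \subset U$ and $0 < a < b < c < d < T$, cover $K$ by finitely many balls $B(x_i, r_i)$ with $r_i$ small enough that $B(x_i, 2r_i) \subset U$ and that the time shift $r_i^2/4$ used in a single application of Theorem \ref{th-H2} is much smaller than $c-b$. Subdivide the time interval $[a,d]$ into finitely many sub-intervals of comparable length, and compose the local Harnack estimates along the finite chain of overlapping balls, using connectedness of $U$ to link any two components. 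This yields a constant $C(U,K,a,b,c,d)$ as required.

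Third, Assumption \ref{cts-heat-kernel} follows because (ID1)--(ID2) guarantees, via \cite[Corollary 3.3]{sturm-dirichlet-iii}, that all local weak solutions are locally H\"older continuous; applying this to $(t,x) \mapsto p(t, x_0, x)$ (and using joint regularity results of \cite{sturm-dirichlet-ii}) gives a continuous heat kernel. Finally, for Assumption \ref{Pt-cts-pointwise}, I would follow the argument already indicated in the text: given $f \in C_c(X)$, $x_0 \in X$ and $\epsilon > 0$, use regularity of $(\mathcal{E},\mathbb{D})$ and the Markovian truncation trick (normal contraction flattening $g_1$ near $g_1(x_0)$) to find $g \in C_c(X) \cap \mathbb{D}$ that is constant on a neighborhood $U$ of $x_0$ and $\epsilon$-close to $f$ in sup-norm. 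Then \cite[Lemma 2.28]{gyrya-saloff-coste} shows that extending $P_t g$ by the constant value $g(x_0)$ for $t \le 0$ produces a local weak solution on $(-\infty, \infty) \times U$; by the H\"older continuity just established, $P_t g(x_0) \to g(x_0)$, and letting $\epsilon \to 0$ (using that $P_t$ is Markovian, hence sup-norm contractive) completes the argument. The only mildly subtle step is the cutoff/chaining needed in the Harnack step; everything else is essentially bookkeeping of the results quoted from Sturm and Gyrya--Saloff-Coste.
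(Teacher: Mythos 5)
Your proposal is correct and follows essentially the same route as the paper: Lipschitz cutoff functions built from the intrinsic distance under (ID1) (plus the topological exhaustion argument) for Assumption \ref{union-of-nice}, chaining the scale-adapted Harnack inequality of Theorem \ref{th-H2} over finitely many small overlapping balls for Assumption \ref{harnack}, Sturm's local H\"older continuity of local weak solutions for Assumption \ref{cts-heat-kernel}, and the approximation-by-a-locally-constant $g \in C_c(X)\cap\mathbb{D}$ argument via \cite[Lemma 2.28]{gyrya-saloff-coste} for Assumption \ref{Pt-cts-pointwise}. The only difference is cosmetic: you construct the cutoff $\psi$ explicitly from $d_{\mathcal{E}}$, whereas the paper simply cites the abundance of cutoff functions under (ID1) from \cite{sturm-dirichlet-i}.
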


Among the great many concrete examples of Dirichlet spaces satisfying
(ID1)--(ID2), let us mention inner uniform Euclidean domains equipped
with the canonical Neumann-type Dirichlet form
\cite{gyrya-saloff-coste}, complete Riemannian manifolds, connected
Lie groups equipped with an invariant sub-Riemannian structure given
by a generating family of left-invariant vector fields
\cite{purplebook}, the natural Dirichlet form on a polytopal complex
(under mild structural assumptions) \cite{eells-fuglede,pivarski-saloff-coste-complex}, and
Alexandrov spaces with sectional curvature bounded below
\cite{kuwae-et-al-alexandrov}.  Another example is the harmonic
Sierpi\'nski gasket described in \cite{kigami-measurable-riemannian},
which is something of a bridge to the fractal-like spaces of the next
subsection.

\subsection{Fractal-like spaces}\label{sec-fractal}
For the purposes of this paper, we call a Dirichlet space 
satisfying Assumption \ref{dirichlet-space-ass} \textbf{fractal-like of type $\beta$}
if the following local parabolic  
Harnack inequality relative to the metric balls $B_d(x,r)$ of $(X,d)$ 
is satisfied:
\begin{itemize}
\item[(H-$\beta$)]
For any compact $K$ there exist constants $r_K, H_K$ such that for any
$x \in K$, we have $\overline{B(x, 2 r_K)}$ is compact, and any nonnegative 
local weak solution $u$ of the heat equation (\ref{heat-eqn}) in 
$Q= (0,r^\beta)\times B(x,r)$, $x\in K$, $r\in (0,r_K)$, satisfies
\begin{equation}\label{Hbeta}
\esssup_{Q_-}\{u\}\le H_K \essinf_{Q_+}\{u\},
\end{equation}
where $Q_-=(r^\beta/4,r^\beta/2)\times B(x,r/2)$ and $Q_+=(3r^\beta/4,r^\beta)
\times B(x,r/2)$.
\end{itemize}
As in Section \ref{sec-id}, we can also say that
  {\boldmath{$(\text{\bf H-}\beta)$}} 
\textbf{holds locally uniformly}
if $H_{\{x\}}$, $r_{\{x\}}$ may be chosen independent of
$x$, and \textbf{uniformly at all scales and positions} if moreover
we can take $r_{\{x\}} \equiv \infty$.  

The parameter $\beta$ is known as the \textbf{walk dimension} and
describes the space-time scaling in the Dirichlet space.  
Thanks to the work of Barlow, Bass, Kumagai, and their collaborators
(see \cite{bbk06} and references therein), property (H-$\beta$) can be
characterized in a way that is similar in spirit to the statement
provided by Theorem \ref{th-H2}.  

The parabolic Harnack inequality (\ref{Hbeta}) is certainly stronger
than that of Assumption \ref{harnack}, and continuity of the heat
kernel (and other local weak solutions) follows as well, thus
verifying Assumptions \ref{cts-heat-kernel} and \ref{Pt-cts-pointwise}
as in Section \ref{sec-id}.  Condition (ID1) often fails to hold in
fractal-like spaces, but a replacement is supplied by so-called
CS($\beta$) cutoff Sobolev inequalities as described in \cite{bbk06}.
The latter condition is implied by (\ref{Hbeta}) \cite[Theorem
2.16]{bbk06}, and guarantees the existence of reasonable cutoff
functions; in particular, as we show in Appendix \ref{cutoff-sobolev},
it implies Assumption \ref{union-of-nice}.  Thus, a fractal-like space
in the sense introduced above satisfies all the assumptions introduced
in Section \ref{assumptions-sec}.

Examples of fractal-like spaces in this sense include the Sierpi\'nski
gasket \cite{bp88}, generalized Sierpi\'nski carpets \cite{bbk06}, and
Laakso spaces \cite{steinhurst-laakso-unique}.

\subsection{Locally compact but infinite dimensional examples}\label{sec-torus}
An interesting classes of examples comes from a symmetric 
Gaussian semigroup on the infinite 
dimensional torus $\mathbb T^\infty$ (the countable product of circles) and
$\mathbb R^k\times \mathbb T^\infty$. Each of these spaces is equipped with its 
Haar measure  $\mu$. Non-degenerate symmetric Gaussian convolution semigroups of 
measures, $(\mu^A_t)_{t\ge 0}$, 
are in one-to-one correspondence with symmetric positive definite matrices 
$A=(a_{i,j})$ in such a way that for any smooth function $\phi$ depending only on 
finitely many coordinates, we have
$$\lim_{t\rightarrow 0} \frac{1}{t}(\mu^A_t(\phi)-\phi(0))=
\sum_{i,j}a_{i,j}\partial_i\partial_j\phi(0).$$
Here the partial derivatives refer to the natural coordinate system
in the corresponding product space. The sum on the right-hand side has only 
finitely many non-zero terms because $\phi$ depends only on finitely many 
coordinates. To say that $A$ is positive definite is to say that for any vector
$\xi=(\xi_i)$ with finitely many non-zero coordinates,
$\langle A\xi,\xi\rangle=\sum a_{i,j}\xi_i\xi_j\ge 0$ and 
$\langle A\xi,\xi\rangle= 0$ if and only if $\xi=0$.

The family of measures $(\mu^A_t)_{t\ge 0}$ defines a symmetric Markov 
semigroup  $f\mapsto f*\mu^A_t$ with associated Dirichlet form
$$\mathcal{E}_A(f,g)=\int \sum_{i,j}a_{i,j}\partial_i f\partial_j g \,d\mu.$$
The domain of this form is the closure of the smooth compactly supported functions 
depending only on finitely many coordinates in the norm 
$\|f\|_2+\mathcal E_A (f,f)^{1/2}$ and can be described more explicitly, 
see \cite{bendikov-saloff-coste-diagonal}.

As examples of Dirichlet spaces on locally compact spaces, these examples are 
interesting because of the great variety of very different behaviors. 
For instance, depending on the matrix $A$, the measures $\mu^A_t$ may or may 
not have a density with respect to the Haar measure $\mu$ and, if it exists, 
this density may or may not be continuous. Further, depending on $A$,
the intrinsic distance defined by (\ref{intrinsic-d}) may or may not 
have property (ID1) whereas property (ID2) is never satisfied.

The following theorem addresses the question of whether or not the assumptions of 
Section \ref{assumptions-sec} hold. We note that $\mathbb R^k\times 
\mathbb T^\infty$ is locally compact, metrizable, path connected 
and locally path connected. The Dirichlet forms $\mathcal{E}_A$ 
described above are regular and strictly local, so Assumption 
\ref{dirichlet-space-ass} is always satisfied. 
Assumption \ref{Pt-cts-pointwise} is also always satisfied (indeed, in this
case, if $f$ is continuous and compactly supported, $f*\mu^A_t(x)=\mathbb{E}_x[f(X_t)]$
everywhere, by invariance). Assumption \ref{union-of-nice} is also always satisfied (use 
smooth cutoff functions that depend only on finitely many coordinates).

Define 
$$W_A(s)=\#\{\theta\in \mathbb Z^{(\infty)}: 
\langle A \theta,\theta\rangle \le s\}$$
where $\mathbb Z^{(\infty)}$ is the set of integer valued sequences with 
finitely many non-zero entries. The function $W_A$ may be infinite for some $s$.
\begin{theorem}[\cite{bendikov-saloff-coste-diagonal}] Referring to the above setting and notation, we have:
\begin{itemize}
\item {\em Assumption \ref{cts-heat-kernel}} is satisfied if and only if
$$\lim_{s\rightarrow \infty}\frac{1}{s}\log W_A(s)=0.$$
\item {\em Assumption \ref{harnack}} is satisfied if and only if
$$\lim_{s\rightarrow \infty}\frac{1}{\sqrt{s}}\log W_A(s)=0.$$
This is also equivalent to the statement that $\mu^A_t$ is absolutely continuous
with respect to the Haar measure $\mu$ and admits a continuous density 
$x\mapsto \mu^A_t(x)$ such that 
$$\lim_{t\rightarrow 0} t\log \mu^A_t(0)=0.$$  
\end{itemize}
\end{theorem}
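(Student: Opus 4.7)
The plan is to translate everything into Fourier analysis on the locally compact abelian group $X = \mathbb{R}^k \times \mathbb{T}^\infty$, with dual $\widehat{X} = \mathbb{R}^k \times \mathbb{Z}^{(\infty)}$, on which $\mu^A_t$ is diagonalized with Fourier symbol $e^{-t\langle A\theta,\theta\rangle}$. Whenever a density exists, it is given by
\[
  \mu^A_t(x) = \int_{\widehat{X}} e^{-t\langle A\theta,\theta\rangle}\chi_\theta(x)\, d\hat\mu(\theta), \qquad \mu^A_t(0) = \int_{\widehat{X}} e^{-t\langle A\theta,\theta\rangle}\,d\hat\mu(\theta),
\]
and since $|\chi_\theta|\le 1$ one has $|\mu^A_t(x)|\le \mu^A_t(0)$. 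In the pure torus case the integral is the sum $\sum_{\theta\in\mathbb{Z}^{(\infty)}} e^{-t\langle A\theta,\theta\rangle}$, which by Stieltjes decomposition equals $t\int_0^\infty e^{-ts}W_A(s)\,ds$. This exponential Laplace representation is the hinge of everything: it couples the tail behavior of $W_A$ directly to the small-$t$ behavior of the on-diagonal value $\mu^A_t(0)$.

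The continuous heat kernel characterization I would handle first. A standard Laplace-transform computation shows that $t\int_0^\infty e^{-ts}W_A(s)\,ds$ is finite for every $t>0$ if and only if $\limsup_{s\to\infty} s^{-1}\log W_A(s) = 0$. Finiteness of the sum is exactly summability of the Fourier series for $\mu^A_t$; combined with uniform summability on compact time-intervals bounded away from $0$, this gives joint continuity of the density in $(t,x)$. Conversely, if $\mu^A_t$ is represented by a continuous density, evaluating its Fourier coefficients at $0$ recovers the sum, forcing its finiteness for every $t>0$.

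For the Harnack bullet I would proceed in two steps. The analytic step is a de Bruijn-type exponential Tauberian theorem applied to the Laplace representation: the saddle-point heuristic that $\log W_A(s)\sim c\sqrt{s}$ yields optimizer $s\sim c^2/(4t^2)$ and value $\log \mu^A_t(0)\sim c^2/(4t)$, translating rigorously into the equivalence $\lim_{t\to 0} t\log\mu^A_t(0) = 0 \iff \lim_{s\to\infty} s^{-1/2}\log W_A(s) = 0$. The remaining task is to identify this on-diagonal decay with Assumption \ref{harnack}. The direction ``Harnack $\Rightarrow$ decay'' is the softer one: apply Assumption \ref{harnack} to the local weak solution $(s,x)\mapsto \mu^A_s(x)$ on a fixed precompact $K\ni 0$, bounding $\esssup_{[a,b]\times K}\mu^A_s$ by $\essinf_{[c,d]\times K}\mu^A_s \le \frac{1}{\mu(K)}\int_K \mu^A_s\,d\mu \le 1/\mu(K)$; translation invariance upgrades this to a bound on $\mu^A_s(0)$ on the chosen window, and iterating the convolution semigroup identity $\mu^A_{2t}(0) = \|\mu^A_t\|_2^2$ together with the monotonicity $\mu^A_{t+s}(0)\le \mu^A_s(0)$ extends it, with the correct asymptotic rate, to the $t\to 0$ regime.

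The main obstacle is the converse direction, ``decay $\Rightarrow$ Harnack.'' Here one starts only from the sub-Gaussian on-diagonal bound $\mu^A_t(0) = e^{o(1/t)}$ and the convolution-semigroup structure. My plan would be to use a Davies/Carne-Varopoulos perturbation argument to extract off-diagonal heat kernel upper bounds, pair them with a Nash-type lower bound to obtain two-sided estimates, and then derive a local Poincar\'e-type inequality on small cylinders adapted to $\mu^A_t$, from which Moser iteration produces Assumption \ref{harnack}. The delicate point is that, because the intrinsic pseudo-distance need not generate the topology and property (ID2) never holds in this infinite-dimensional setting, none of the standard Sturm / Saloff-Coste machinery applies directly; one must exploit the translation-invariant convolution structure of $\mu^A_t$ at every step, and this is precisely the technical heart of \cite{bendikov-saloff-coste-diagonal}.
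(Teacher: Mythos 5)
This theorem is not proved in the paper at all: it is imported verbatim from \cite{bendikov-saloff-coste-diagonal}, so there is no internal argument to compare against, and the standard your proposal must meet is whether it constitutes a proof on its own. It does not. Your Fourier/Laplace setup and the first bullet are essentially sound (the identity $\sum_\theta e^{-t\langle A\theta,\theta\rangle}=t\int_0^\infty e^{-ts}W_A(s)\,ds$, finiteness for all $t>0$ iff $s^{-1}\log W_A(s)\to 0$, uniform summability giving joint continuity, Parseval for the converse), and the exponential Tauberian step correctly identifies $t\log\mu^A_t(0)\to 0$ with $s^{-1/2}\log W_A(s)\to 0$. The gaps are in the second bullet, which is the substance of the theorem.

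First, your ``Harnack $\Rightarrow$ decay'' step does not deliver the rate. Assumption \ref{harnack} is qualitative and non-scale-invariant: the constant $C(U,K,a,b,c,d)$ has uncontrolled dependence on the time window, so applying it on a fixed window only bounds $\mu^A_s(0)$ for $s$ in an interval bounded away from $0$, which says nothing about $t\to 0$. The tools you invoke to bridge this — the monotonicity $\mu^A_{t+s}(0)\le\mu^A_s(0)$ and the identity $\mu^A_{2t}(0)=\lVert\mu^A_t\rVert_{L^2}^2$ — both propagate bounds from small times to larger times, i.e.\ in the wrong direction for controlling the blow-up of $\mu^A_t(0)$ as $t\to 0$; as written this step fails, and obtaining $t\log\mu^A_t(0)\to 0$ from the local Harnack hypothesis requires a genuinely different argument. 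Second, the converse ``decay $\Rightarrow$ Harnack'' is only announced as a plan (Davies/Carne--Varopoulos off-diagonal bounds, Nash lower bounds, Moser iteration), and you yourself note that none of the standard Sturm/Saloff-Coste machinery applies here because (ID2) never holds and the intrinsic distance is degenerate; you then defer exactly this step to \cite{bendikov-saloff-coste-diagonal}. Deferring the technical heart of the statement to the reference being proved is circular, so the proposal should be regarded as a correct framework plus an accurate record of what must be cited, not as a proof of the theorem.
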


Computing the functions $W_A$ is a very difficult task. However, 
the results become much more explicit in the case when $A$ is a diagonal matrix 
with diagonal entries $a_{i,i}=a_i>0$. In this case, set
$$N_A(s)=\#\{ i: a_i\le s\}.$$ 
Then, Assumption \ref{cts-heat-kernel} is satisfied if and only if 
$$\lim_{s\rightarrow \infty} \frac{1}{s}\log N_A(s)=0.$$
Assumption \ref{harnack} is satisfied if and only if
$$\lim_{s\rightarrow \infty} \frac{1}{s} N_A(s)=0.$$

\section{Properties of local weak solutions}

In this section, we collect a number of facts about local weak
solutions that we will use in the proof of the main theorem.

We can integrate by parts in (\ref{lws-eq}) and put the time
derivative on the  test function $\phi$.  This results in a statement
that makes sense for $u$ which are not \emph{a priori} assumed to have
an $L^2$ time derivative.  By the following lemma, this new statement
is equivalent to Definition \ref{lws-def}.

\begin{lemma}\label{weaker-solution}
  Let $U \subset X$ be open and nice.  A function $u \in L^2([0,T];
  \mathbb{D})$ is a local weak solution on $(0,T) \times U$ if and
  only if it satisfies
  \begin{equation}\label{weaker-solution-eqn}
    -\int_0^T (\phi'(t), u(t))_{\mathbb{D}^*, \mathbb{D}}\,dt +
     \int_0^T \mathcal{E}(\phi(t), u(t))\,dt = 0
  \end{equation}
  for all $\phi \in W^{1,2}([0,T]; \mathbb{D}, \mathbb{D}^*)$ which
  are compactly supported inside $(0,T) \times U$.
\end{lemma}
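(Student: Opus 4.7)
The plan is to prove both implications by integration by parts in time, using the niceness of $U$ to localize $u$ via explicit multiplicative cutoffs. The key tool is the polarized product rule coming from (\ref{product-rule}): for $f, g \in W^{1,2}([0,T]; \mathbb{D}, \mathbb{D}^*)$, the scalar $t \mapsto (f(t), g(t))_{L^2}$ is absolutely continuous with
$$\frac{d}{dt}(f(t), g(t))_{L^2} = (f'(t), g(t))_{\mathbb{D}^*, \mathbb{D}} + (g'(t), f(t))_{\mathbb{D}^*, \mathbb{D}},$$
so integrated integration by parts is available whenever one of $f, g$ vanishes in $L^2(X, \mu)$ at $t = 0, T$.

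For the forward direction, given $\phi \in W^{1,2}$ compactly supported inside $(0,T) \times U$, I would enclose $\mathrm{supp}\,\phi$ in a compact $K \subset (0,T) \times U$ and produce $u_K \in W^{1,2}$ with $u = u_K$ on $K$ satisfying (\ref{lws-eq}). Since $\phi$ vanishes at the time endpoints, the product rule converts $\int_0^T (u_K'(t), \phi(t))\,dt$ into $-\int_0^T (\phi'(t), u_K(t))\,dt$. It remains to replace $u_K$ with $u$ in both of the surviving integrals. In the energy integral this is immediate from strict locality of $\mathcal{E}$ (exactly the abuse of notation already made after Definition \ref{lws-def}). In the time-pairing integral, I would reduce to showing that $\int_0^T (\phi'(t), w(t))_{\mathbb{D}^*,\mathbb{D}}\,dt = 0$ for $w = u - u_K$ vanishing on an open set containing $\mathrm{supp}\,\phi$, first handling tensor-product test functions $\phi(t,x) = \chi(t)\psi(x)$ (where $\phi'(t) = \chi'(t)\psi \in L^2 \hookrightarrow \mathbb{D}^*$ and the pairing collapses to $\chi'(t) \int_X \psi w\,d\mu = 0$ by the support conditions) and extending to general $\phi$ by density in $W^{1,2}$ with matching support constraints.

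For the reverse direction, given a compact $K \subset (0,T) \times U$, I would construct $u_K$ by the recipe written down after Assumption \ref{union-of-nice}: choose $[a,b] \times K_1 \supset K$ with $K_1 \subset U$ compact, let $\psi$ be a nice cutoff for $K_1$ inside $U$ and $\chi \in C^\infty_c((0,T))$ equal to $1$ on $[a,b]$, and set $u_K(t,x) = \chi(t)\psi(x)u(t,x)$. Property (\ref{cutoff-preserve-domain}) of nice cutoffs plus $u \in L^2([0,T]; \mathbb{D})$ give $u_K \in L^2([0,T]; \mathbb{D})$, and $u_K = u$ on $K$. To obtain $u_K \in W^{1,2}$ and verify (\ref{lws-eq}), I would substitute the tensored test function $\tilde\phi = \chi\psi\phi$ into the assumed identity (\ref{weaker-solution-eqn}) for arbitrary $\phi \in W^{1,2}$ supported in $(0,T) \times U$; distributing $\tilde\phi' = \chi'\psi\phi + \chi\psi\phi'$ and rearranging identifies the candidate distributional time derivative of $u_K$ as the functional $\phi \mapsto -\chi(t)\mathcal{E}(\psi\phi, u(t)) + \chi'(t)(\phi, \psi u(t))_{L^2}$, whose $\mathbb{D}^*$-norm is bounded by $C_\psi \|u(t)\|_\mathbb{D} \cdot (|\chi(t)| + |\chi'(t)|)$ and hence $L^2$ in $t$. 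Once $u_K \in W^{1,2}$ is established, (\ref{lws-eq}) follows by reversing the integration by parts: on the support of any admissible $\phi$ one has $\chi \equiv 1$ and $\psi \equiv 1$, so strict locality collapses the cutoffs and returns the original equation.

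The main obstacle is the reverse direction, specifically manufacturing an $L^2([0,T]; \mathbb{D}^*)$-valued time derivative for $u_K$. A priori $u$ has no time regularity whatsoever, so the derivative must be extracted entirely from (\ref{weaker-solution-eqn}); niceness of $\psi$ is essential because it ensures $\psi \cdot : \mathbb{D} \to \mathbb{D}$ is bounded (with constant $C_\psi$), which by duality lets us interpret products like $\chi(t)\psi\phi'(t)$ as bona fide elements of $\mathbb{D}^*$ and thereby legitimizes the above bookkeeping. Beyond this one step, both directions are routine consequences of integration by parts and strict locality.
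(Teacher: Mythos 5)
Your argument is correct and rests on the same two pillars as the paper's proof --- testing (\ref{weaker-solution-eqn}) against tensored test functions built from a nice cutoff $\psi$ and a time cutoff, and collapsing the cutoffs at the end by strict locality --- but the key technical step is carried out by a different device. In the reverse direction the paper localizes to $\psi u$ (no time cutoff) and produces its time derivative abstractly: the functional $\phi \mapsto -\int_0^T \mathcal{E}(u(t), \psi\phi(t))\,dt$ is bounded on $L^2([0,T];\mathbb{D})$, hence represented by some $w \in L^2([0,T];\mathbb{D}^*)$ via the duality $L^2([0,T];\mathbb{D})^* = L^2([0,T];\mathbb{D}^*)$, and $w$ is then identified as $(\psi u)'$ by testing with $\chi(t)\psi f$. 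You instead take the canonical localization $u_K = \chi\psi u$ and write its derivative down explicitly, pointwise in $t$, with the direct bound $C_\psi \norm{u(t)}_{\mathbb{D}}(|\chi(t)|+|\chi'(t)|)$; this buys independence from the vector-valued duality theorem (the paper cites Dinculeanu), at the cost of checking strong measurability of your candidate derivative (weak measurability plus separability of $\mathbb{D}^*$ and Pettis' theorem suffices) and of one small slip in the write-up: to conclude $u_K' = w$ you must verify $\int_0^T \chi_0(t)(w(t),g)_{\mathbb{D}^*,\mathbb{D}}\,dt = -\int_0^T \chi_0'(t)(u_K(t),g)_{L^2}\,dt$ for \emph{every} $g \in \mathbb{D}$, not only those supported in $U$ --- which your substitution handles automatically, since $\chi\chi_0\psi g$ is compactly supported inside $(0,T)\times U$ for arbitrary $g$, so simply drop the support restriction you imposed on $\phi$. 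In the forward direction you are in fact more careful than the paper (which dismisses it as trivial): you correctly flag that $\phi'(t) \in \mathbb{D}^*$ carries no spatial localization, so the replacement $\int(\phi', u_K)\,dt = \int(\phi', u)\,dt$ genuinely needs justification; your reduction to tensor test functions plus $W^{1,2}$-density is sound, though the density step deserves a sentence (mollify $\phi$ in time, then interpolate piecewise linearly in time; both operations preserve a slightly fattened product support inside $K$ and converge in the $W^{1,2}$ norm, against which your pairing with $u-u_K$ is continuous).
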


\begin{proof}
  The forward implication is trivial, using integration by parts.  For
  the converse, let $V$ be open and precompact with $\closure{V}
  \subset U$.  Let $\psi$ be a nice cutoff function for $\closure{V}$
  inside $U$.  Then $\phi
  \mapsto - \int_0^T \mathcal{E}(u(t), \psi \phi(t))\,dt$ is a bounded
  linear functional on $L^2([0,T]; \mathbb{D})$.  But $L^2([0,T];
  \mathbb{D})^* = L^2([0,T]; \mathbb{D}^*)$ \cite[Theorem
  II.13.5.8, Corollary 1]{dinculeanu}, so there exists $w \in L^2([0,T];
  \mathbb{D}^*)$ such that for all $\phi \in L^2([0,T]; \mathbb{D})$,
  \begin{equation*}
    \int_0^T (w(t), \phi(t))_{\mathbb{D}^*, \mathbb{D}}\,dt =
    -\int_0^T \mathcal{E}(u(t), \psi \phi(t))\,dt.
  \end{equation*}

  For any $\chi \in C_c^\infty((0,T))$ and any $f \in \mathbb{D}$, we
  note that $\chi(t)\psi f \in W^{1,2}([0,T]; \mathbb{D},
  \mathbb{D}^*)$ with compact support inside $(0,T) \times U$.  Thus
  we have
  \begin{align*}
    \left(\int_0^T w(t) \chi(t)\,dt, f\right)_{\mathbb{D}^*,
    \mathbb{D}}
    &= \int_0^T (w(t), \chi(t) f)_{\mathbb{D}^*, \mathbb{D}}\,dt \\ 
    &= -\int_0^T \mathcal{E}(u(t), \chi(t) \psi f)\,dt \\
    &= -\int_0^T (\chi'(t) \psi f, u(t))_{L^2}\,dt
    \\
    &= -\left(\int_0^T \chi'(t) \psi u(t)\,dt, f\right)_{L^2}.
  \end{align*}
  Thus $w$ is the weak derivative of $\psi u$, and so we have $\psi u
  \in W^{1,2}$.

  Now if $\phi \in W^{1,2}$ with compact support inside $(0,T) \times
  V$, we have
  \begin{align*}
    \int_0^T (\psi u'(t), \phi(t))_{\mathbb{D}^*, \mathbb{D}}\,dt &=
    -\int_0^T \mathcal{E}(u(t), \psi \phi(t))\,dt \\
    &= -\int_0^T \mathcal{E}(\psi u(t), \phi(t))\,dt
  \end{align*}
  since $\mathcal{E}$ is local and $\psi = 1$ on the support of
  $\phi$.  Thus we have produced a function (namely $\psi u$) which equals
  $u$ a.e. on $(0,T) \times V$ and satisfies the necessary equation.
  Since $V$ was arbitrary, $u$ is a local weak solution on $(0,T)
  \times U$.
\end{proof}

We now give an ``extension principle'' giving conditions for a local
weak solution to have an extension backwards in time.  In the setting
of Euclidean space, a similar result was given in \cite{aronson68}.

\begin{lemma}[Extension principle]  \label{extension}
  Let $U$ be open and nice, and let $u$ be a local weak solution on
  $(0,T) \times U$.  Suppose that for any nice cutoff
  function $\psi$ compactly supported inside $U$, we have $\psi u \in
  L^2([0,T]; \mathbb{D})$, and $\psi u(t) \to 0$ weakly in
  $L^2(X,\mu)$ as $t \downarrow 0$.  Extend $u$ by setting $u(t) = 0$
  for $t \le 0$. 
  Then $u$ is a local weak solution on $(-\infty, T) \times U$.  
\end{lemma}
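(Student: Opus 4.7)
The plan is to reduce to checking the weak form of the equation, then to exploit a time cutoff together with the boundary hypothesis to pass from the known equation on $(0,T) \times U$ to the new equation on $(-\infty, T) \times U$. Concretely, by Lemma \ref{weaker-solution} (applied locally in time after multiplying by a nice spatial cutoff), it suffices to show that the extended $u$ satisfies
$$-\int_{-\infty}^T (\phi'(t), u(t))_{\mathbb{D}^*,\mathbb{D}}\,dt + \int_{-\infty}^T \mathcal{E}(\phi(t), u(t))\,dt = 0$$
for every $\phi \in W^{1,2}([a,T]; \mathbb{D}, \mathbb{D}^*)$ (with $a < 0$) compactly supported inside $(-\infty, T) \times U$. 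Since $u \equiv 0$ on $(-\infty, 0]$, both integrals reduce to integrals over $[0,T]$.

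Let $\eta_\epsilon$ be the piecewise-linear time cutoff equal to $0$ on $(-\infty, \epsilon]$, equal to $1$ on $[2\epsilon, T]$, and linearly interpolated on $[\epsilon, 2\epsilon]$, so that $\eta_\epsilon'(t) = \epsilon^{-1}\mathbf{1}_{[\epsilon, 2\epsilon]}(t)$. Then $\eta_\epsilon \phi \in W^{1,2}([0,T]; \mathbb{D}, \mathbb{D}^*)$ is compactly supported in $(0, T) \times U$, so applying Definition \ref{lws-def} to $u$ on $(0, T) \times U$ with test function $\eta_\epsilon \phi$, followed by integration by parts in time (justified since $\eta_\epsilon \phi$ vanishes at the endpoints of $[0,T]$), yields
$$-\int_0^T ((\eta_\epsilon \phi)'(t), u(t))_{\mathbb{D}^*, \mathbb{D}}\,dt + \int_0^T \mathcal{E}(\eta_\epsilon \phi(t), u(t))\,dt = 0.$$
Expand $(\eta_\epsilon \phi)' = \eta_\epsilon' \phi + \eta_\epsilon \phi'$ and choose a nice cutoff $\psi$ equal to $1$ on the spatial support of $\phi$, so that $\phi = \psi \phi$ and, by hypothesis, $\psi u \in L^2([0,T]; \mathbb{D})$. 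By locality of $\mathcal{E}$, the integrands $\eta_\epsilon (\phi'(t), u(t))_{\mathbb{D}^*,\mathbb{D}}$ and $\eta_\epsilon \mathcal{E}(\phi(t), u(t))$ are dominated (uniformly in $\epsilon$) by the $L^1([0,T])$ functions $\|\phi'(t)\|_{\mathbb{D}^*}\|\psi u(t)\|_{\mathbb{D}}$ and $\mathcal{E}(\phi(t))^{1/2}\mathcal{E}(\psi u(t))^{1/2}$ respectively. Dominated convergence therefore sends these two terms to $-\int_0^T (\phi', u)_{\mathbb{D}^*, \mathbb{D}}$ and $\int_0^T \mathcal{E}(\phi, u)$, which are exactly the contributions we want.

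The remaining term, which after using $\phi = \psi \phi$ reads
$$-\frac{1}{\epsilon}\int_\epsilon^{2\epsilon}(\phi(t), \psi u(t))_{L^2}\,dt,$$
must be shown to vanish as $\epsilon \to 0$, and this is the main obstacle. I would split $\phi(t) = \phi(0) + (\phi(t) - \phi(0))$. The piece $(\phi(0), \psi u(t))_{L^2}$ tends to $0$ directly by the weak convergence hypothesis $\psi u(t) \rightharpoonup 0$ in $L^2$. For the remainder $(\phi(t) - \phi(0), \psi u(t))_{L^2}$, the Sobolev embedding $W^{1,2}([a,T]; \mathbb{D}, \mathbb{D}^*) \hookrightarrow C([a, T]; L^2(X,\mu))$ gives $\|\phi(t) - \phi(0)\|_{L^2} \to 0$, while the uniform boundedness principle applied to the weakly null family $\{\psi u(t)\}_{t \in (0,\delta)}$ bounds $\|\psi u(t)\|_{L^2}$ uniformly for small $t$. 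Thus $(\phi(t), \psi u(t))_{L^2} \to 0$ as $t \to 0^+$, and its time-average over $[\epsilon, 2\epsilon]$ inherits this limit. This marriage of weak convergence with a time-varying test vector, handled via continuity of $\phi$ into $L^2$, is really the only nontrivial step; everything else is a routine cutoff-and-limit manipulation.
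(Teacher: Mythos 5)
Your argument is correct and is essentially the paper's own proof: both reduce matters to the weak formulation of Lemma \ref{weaker-solution} applied to $\psi u$, both use the hypothesis $\psi u \in L^2([0,T];\mathbb{D})$ to pass to the limit in the energy and $(\phi',\cdot)_{\mathbb{D}^*,\mathbb{D}}$ terms by dominated convergence, and both kill the contribution near $t=0$ by pairing the weakly null family $\psi u(t)$ against the strongly $L^2$-continuous $\phi$; the only difference is that your ramp cutoff $\eta_\epsilon$ produces the averaged term $\epsilon^{-1}\int_\epsilon^{2\epsilon}(\phi(t),\psi u(t))_{L^2}\,dt$, whereas the paper truncates sharply at $t=\epsilon$ and disposes of the pointwise boundary term $(\phi(\epsilon),\psi u(\epsilon))_{L^2(X,\mu)}$ by exactly the weak-times-strong argument you spell out. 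One small point of hygiene: replacing $u_K$ by $\psi u$ inside the pairing $(\phi'(t),\cdot)_{\mathbb{D}^*,\mathbb{D}}$ is not justified by locality of $\mathcal{E}$ (which only governs the energy term, since $\phi'(t)$ is an abstract element of $\mathbb{D}^*$); it should instead be arranged by taking the localized function itself to be $u_K=\chi\psi u$, as the construction at the end of Section \ref{assumptions-sec} allows for nice $U$ --- which is in effect what the paper does by working with $\psi u$ throughout.
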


\begin{proof}
  Let $V$ be open and precompact with $\closure{V} \subset U$, and let
  $\psi$ be a nice cutoff function of $\closure{V}$ inside $U$.  Let $\phi \in
  W^{1,2}((-\infty; T]; \mathbb{D}, \mathbb{D}^*)$ be compactly supported
  inside $(-\infty, T) \times V$.  Fix any $\epsilon > 0$, and let
  $\chi_n \in C_c((\epsilon, T))$ be a bounded sequence of cutoff functions
  with $\chi_n \to 1_{[\epsilon, T)}$ pointwise.  
  Integrating by parts,
  we have
  \begin{align*}
    \int_{\epsilon}^T (\phi'(t), \psi u(t))_{\mathbb{D}^*, \mathbb{D}}\,dt
    &= - (\phi(\epsilon), \psi u(\epsilon))_{L^2(X,\mu)} -
    \int_{\epsilon}^T (\psi u'(t), \phi(t))_{\mathbb{D}^*, \mathbb{D}}\,dt \\
    &= - (\phi(\epsilon), \psi u(\epsilon))_{L^2(X,\mu)} -
    \lim_{n \to \infty} \int_{\epsilon}^T 
    (\psi u'(t), \chi_n(t) \phi(t))_{\mathbb{D}^*, \mathbb{D}}\,dt \\
    &= - (\phi(\epsilon), \psi u(\epsilon))_{L^2(X,\mu)} +
    \lim_{n \to \infty} \int_{\epsilon}^T \mathcal{E}(\psi u(t),
    \chi_n(t) \phi(t))\,dt \\
    &= - (\phi(\epsilon), \psi u(\epsilon))_{L^2(X,\mu)} +
    \int_{\epsilon}^T \mathcal{E}(\psi u(t),
    \phi(t))\,dt,
  \end{align*}
  since $u$, and hence $\psi u$, is a local weak solution on $(0,T)
  \times V$.  The limits involving $\chi_n$ converge as desired by
  dominated convergence.

  Now let $\epsilon \to 0$.  Since $\phi \in W^{1,2}([0,T];
  \mathbb{D}, \mathbb{D}^*)$ and $\psi u \in L^2([0,T]; \mathbb{D})$,
  we have by dominated convergence that
  \begin{align*}
    \int_{\epsilon}^T (\phi'(t), \psi u(t))_{\mathbb{D}^*, \mathbb{D}}\,dt 
    &\to
    \int_0^T (\phi'(t), \psi u(t))_{\mathbb{D}^*, \mathbb{D}}\,dt 
    = 
    \int_{-\infty}^T (\phi'(t), \psi u(t))_{\mathbb{D}^*, \mathbb{D}}\,dt \\
    \int_{\epsilon}^T \mathcal{E}(\psi u(t), \phi(t))\,dt 
    &\to
    \int_{0}^T \mathcal{E}(\psi u(t), \phi(t))\,dt 
    = 
    \int_{-\infty}^T \mathcal{E}(\psi u(t), \phi(t))\,dt.
  \end{align*}
  Also, since $\phi \in C((-\infty, T]; L^2(X,\mu))$ we have
  $\phi(\epsilon) \to \phi(0)$ in $L^2(X,\mu)$, and $\psi u(\epsilon)
  \to 0$ weakly in $L^2(X,\mu)$, so $(\phi(\epsilon), \psi
  u(\epsilon))_{L^2(X,\mu)} \to 0$.  Thus we have shown
  \begin{equation*}
    \int_{-\infty}^T (\phi'(t), \psi u(t))_{\mathbb{D}^*,
    \mathbb{D}}\,dt = \int_{-\infty}^T \mathcal{E}(\phi(t), \psi u(t))\,dt
  \end{equation*}
  so by Lemma \ref{weaker-solution} we have that $\psi u$ (hence
  $u$) is a local weak solution on $(-\infty, T) \times V$.  Since $V
  \subset U$ was arbitrary, we are done.
\end{proof}

An important property of a local weak solution is that, locally, the
size of the function controls its energy.  This is the content of the
following inequality.
A similar inequality was used in \cite[Section 5.2.2]{saloff-coste-sobolev}.

\begin{lemma}[Caccioppoli-type inequality]
  \label{caccioppoli}
  Let $U$ be open, nice, and precompact, and let $V$ be open and
  precompact with $\closure{V} \subset U$.  Let $\psi$ be a nice
  cutoff function of $\closure{V}$ inside $U$.  There is a constant
  $C$, depending on $U,V,\psi,T$, such that, for every nonnegative
  local weak solution $u$ on $(0,T) \times U$, we have
  \begin{equation}
    \norm{\psi u}_{L^2([0,T]; \mathbb{D})} \le C \esssup_{[0,T] \times
    U} u.
  \end{equation}
  (Recall that $\norm{v}_{L^2([0,T]; \mathbb{D})}^2 = \int_0^T
  \mathcal{E}_1(v(t), v(t))\,dt = \int_0^T \left[ \norm{v(t)}_{L^2(X,\mu)}^2 +
  \mathcal{E}(v(t))\right]\,dt$.)
\end{lemma}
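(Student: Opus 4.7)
The plan is to carry out a standard Caccioppoli-type energy estimate by testing the weak equation (\ref{lws-eq}) against the function $\phi := \chi^2 \psi^2 u$, where $\chi \in C_c^\infty((0,T))$ is a smooth time cutoff. Set $M := \esssup_{[0,T]\times U} u$. Rigorously, I would replace $u$ inside $\phi$ by a $W^{1,2}$ localization $u_{K'}$ on a compact set $K' \subset (0,T)\times U$ containing $\supp(\chi) \times \supp(\psi)$, which exists by Assumption \ref{union-of-nice} and the explicit construction described after it. The nice cutoff property of $\psi$ (applied twice) ensures that $\psi^2$ preserves $\mathbb{D}$, so that $\phi \in W^{1,2}([0,T]; \mathbb{D},\mathbb{D}^*)$ with compact support in $(0,T)\times U$.

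For the time-derivative term, since $\psi$ is time-independent, the product rule (\ref{product-rule}) and integration by parts in $t$ (using that $\chi$ vanishes at the endpoints) give
\begin{equation*}
\int_0^T \chi^2 (u'(t), \psi^2 u(t))_{\mathbb{D}^*,\mathbb{D}} \, dt = -\int_0^T \chi \chi' \|\psi u(t)\|_{L^2}^2 \, dt.
\end{equation*}
For the energy term, the Leibniz rule for $\Gamma$ (Appendix \ref{app-energy-measure}) expands
\begin{equation*}
\mathcal{E}(u, \chi^2 \psi^2 u) = \tfrac{\chi^2}{2}\int \psi^2 \, d\Gamma(u) + \chi^2 \int u\psi \, d\Gamma(u,\psi),
\end{equation*}
and a Cauchy--Schwarz bound on the cross term absorbs half of the main term into itself, yielding
\begin{equation*}
\tfrac{1}{4}\int_0^T \chi^2 \int \psi^2 \, d\Gamma(u)\,dt \le \int_0^T \chi|\chi'|\, \|\psi u\|_{L^2}^2 \, dt + \int_0^T \chi^2 \int u^2 \, d\Gamma(\psi)\,dt.
\end{equation*}

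I would then use $|u| \le M$ on $\supp(\psi)$ to bound $\|\psi u\|_{L^2}^2 \le M^2 \mu(\supp \psi)$ and $\int u^2 \, d\Gamma(\psi) \le M^2 \Gamma(\psi)(X) = 2M^2 \mathcal{E}(\psi) < \infty$ (finite because $\psi\in\mathbb{D}$ by the nice cutoff property). To get past the time cutoff, I would choose $\chi_n$ equal to $1$ on $[1/n,T-1/n]$ and linear on the complementary intervals, so that $\int_0^T \chi_n |\chi_n'|\,dt = 1$ uniformly in $n$, and then pass to the limit by monotone convergence to obtain an $n$-independent bound on $\int_0^T\int\psi^2\,d\Gamma(u)\,dt$. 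A second application of Leibniz and Cauchy--Schwarz gives $\mathcal{E}(\psi u(t)) \le \int \psi^2\,d\Gamma(u(t)) + \int u(t)^2\,d\Gamma(\psi)$, which combined with the trivial estimate $\int_0^T \|\psi u\|_{L^2}^2\,dt \le T M^2 \mu(\supp\psi)$ yields $\|\psi u\|_{L^2([0,T];\mathbb{D})}^2 \le C M^2$ with $C = C(U,V,\psi,T)$.

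The main subtlety is that (\ref{lws-eq}) only admits test functions compactly supported in $(0,T)$, but the inequality bounds an $L^2$-norm integrated over the closed interval $[0,T]$; the rescue is that with a well-chosen sequence of time cutoffs, the quantity $\int \chi_n|\chi_n'|\,dt$ remains bounded as $\chi_n \uparrow 1_{(0,T)}$, so monotone convergence delivers the needed estimate up to the endpoints.
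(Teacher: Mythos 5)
Your proof is correct and follows essentially the same route as the paper's: test the weak equation against a time-cutoff multiple of $\psi^2 u$ (the paper uses $\chi_n\psi^2 u$ rather than $\chi^2\psi^2 u$), expand the energy term with the Leibniz rule for $\Gamma$, absorb the cross term by Cauchy--Schwarz/AM--GM, convert the time-derivative term via the product rule (\ref{product-rule}) and integration by parts in $t$ into a term controlled by $\int_0^T |\chi_n'|\,\norm{\psi u(t)}_{L^2}^2\,dt \le C M^2\mu(U)$, and pass to the limit by monotone convergence using cutoffs with uniformly bounded $\int_0^T|\chi_n'|\,dt$. One cosmetic point: the piecewise-linear $\chi_n$ you describe vanish only at $t=0$ and $t=T$, so their support is all of $[0,T]$ and is not compactly contained in $(0,T)$; simply make them vanish on $[0,\tfrac{1}{2n}]\cup[T-\tfrac{1}{2n},T]$ with the linear ramps moved inward, which leaves $\int_0^T\chi_n|\chi_n'|\,dt\le 1$ unchanged and makes the test functions admissible.
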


\begin{proof}
  Set $M = \esssup_{[0,T] \times U} u$.  Since $\int_0^T \norm{\psi
  u(t)}_{L^2(X,\mu)}^2\,dt \le T M^2$, it will be enough to show $\int_0^T
  \mathcal{E}(\psi u(t))\,dt \le CM^2$.  

  By Proposition
  \ref{energy-product-bound}, we have
  \begin{equation}
    \mathcal{E}(\psi u(t), \psi u(t)) \le 2 \int_X u(t)^2
    \,d\Gamma(\psi) + 2\int_X \psi^2 \,d\Gamma(u(t)).
  \end{equation}
  The first term is bounded by $2 M^2 \mathcal{E}(\psi)$ so we
  work on the second term.  By repeated
  application of the product rule (Proposition \ref{energy-leibniz}),
  \begin{align*}
    \int_X \psi^2 \,d\Gamma(u(t)) &= \mathcal{E}(u(t), \psi^2 u(t)) - 2 \int_X u(t) \psi \,d\Gamma(u(t),
    \psi) \\
    &\le \mathcal{E}(u(t), \psi^2 u(t))  + \abs{ \int_X (2u(t)) \psi \,d\Gamma(u(t),
      \psi)} \\
    &\le \mathcal{E}(u(t), \psi^2 u(t)) + 2 \int_X u(t)^2 \,d\Gamma(\psi) +
    \frac{1}{2} \int_X \psi^2\,d\Gamma(u(t)) && \text{by (\ref{energy-amgm}).}
  \end{align*}
  Thus
  \begin{equation*}
    \int_X \psi^2 \,d\Gamma(u(t)) \le 2 \mathcal{E}(u(t), \psi^2 u(t)) + 4
    \int_X u(t)^2 \,d\Gamma(\psi) \le 2 \mathcal{E}(u(t), \psi^2 u(t)) + 4 M^2
    \mathcal{E}(\psi).
  \end{equation*}

  Now let $\chi_n \in C_c^\infty((0,T))$ with $\chi_n \uparrow 1$
  pointwise and $\int_0^T |\chi_n'(t)|\,dt \le 5$.  By monotone
  convergence, 
  \begin{align*}
    \int_0^T \int_X \psi^2 \,d\Gamma(u(t))\,dt &= \lim_{n \to
    \infty} \int_0^T \chi_n(t) \int_X \psi^2 \,d\Gamma(u(t))\,dt \\
    &\le 4 M^2 T
    \mathcal{E}(\psi) + \limsup_{n \to \infty} \int_0^T
    \mathcal{E}(u(t), \chi_n(t) \psi^2 u(t))\,dt.
  \end{align*}
  However, $\phi(t,x) = \chi_n(t) \psi(x)^2 u(t,x)$ is in
  $W^{1,2}([0,T]; \mathbb{D}, \mathbb{D}^*)$ and is compactly supported
  inside $(0,T) \times U$.  Thus, since $u$ is a local weak
  solution on $(0,T) \times U$, we have
  \begin{align*}
    \int_0^T
    \mathcal{E}(u(t), \chi_n(t) \psi^2 u(t))\,dt &= -\int_0^T
    (u'(t), \chi_n(t) \psi^2 u(t))_{\mathbb{D}^*, \mathbb{D}}\,dt \\
    &= -\int_0^T \chi_n(t) ((\psi u)'(t), \psi
    u(t))_{\mathbb{D}^*, \mathbb{D}}\,dt \\
    &= \int_0^T \chi_n'(t) ||\psi u(t)||_{L^2}^2\,dt \\
    & \le 5 M^2 \mu(U) 
  \end{align*}
  where in the next-to-last line we used (\ref{product-rule}) and integration by parts.  
This completes the proof.
\end{proof}

Combining Lemmas \ref{weaker-solution} and \ref{caccioppoli} lets us
show that a bounded limit of local weak solutions is another local
weak solution.

\begin{lemma}\label{weak-soln-limit}\label{convergence}
  Let $U$ be open and nice, and let $u_n$ be a sequence of nonnegative
  local weak solutions on $(0,T) \times U$ which are uniformly
  bounded, i.e. $0 \le u_n \le M$ on $(0,T) \times U$.  Suppose $u_n
  \to u$ pointwise.  Then $u$ is a local weak solution on $(0,T)
  \times U$.
\end{lemma}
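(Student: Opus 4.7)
The plan is to apply the weaker formulation of local weak solution supplied by Lemma \ref{weaker-solution}, after upgrading the pointwise convergence of $u_n$ to weak convergence of a suitable localization in $L^2([0,T];\mathbb{D})$. The Caccioppoli inequality (Lemma \ref{caccioppoli}) will give the uniform energy bound that drives the compactness argument.

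First, I would reduce to working on a precompact subset. Fix $V$ open and precompact with $\closure{V}\subset U$, and let $\psi$ be a nice cutoff function for $\closure{V}$ inside $U$ (which exists since $U$ is nice). Because the $u_n$ are uniformly bounded on $(0,T)\times U$, Lemma \ref{caccioppoli} yields
\[
\sup_n \norm{\psi u_n}_{L^2([0,T];\mathbb{D})} \le C M < \infty,
\]
where $C$ depends on $U,V,\psi,T$ but not on $n$. Thus $\{\psi u_n\}$ is bounded in the Hilbert space $L^2([0,T];\mathbb{D})$ and admits a weakly convergent subsequence $\psi u_{n_k}\rightharpoonup v$.

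Next I would identify the weak limit. Since $L^2([0,T];\mathbb{D})$ embeds continuously into $L^2([0,T]\times X,m\times\mu)$, weak convergence in the former passes to weak convergence in the latter. On the other hand, $u_n\to u$ pointwise, $0\le u_n\le M$, and $\psi$ is compactly supported, so dominated convergence gives $\psi u_{n_k}\to\psi u$ strongly in $L^2([0,T]\times X)$. By uniqueness of weak limits, $v=\psi u$. In particular $\psi u\in L^2([0,T];\mathbb{D})$, and since every subsequence of $\{\psi u_n\}$ has a further subsequence converging weakly to the same limit $\psi u$, the full sequence satisfies $\psi u_n\rightharpoonup \psi u$ weakly in $L^2([0,T];\mathbb{D})$.

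Now I would pass to the limit in the weak equation. Let $\phi\in W^{1,2}([0,T];\mathbb{D},\mathbb{D}^*)$ be compactly supported inside $(0,T)\times V$. By Lemma \ref{weaker-solution}, each $u_n$ satisfies
\[
-\int_0^T(\phi'(t),\psi u_n(t))_{\mathbb{D}^*,\mathbb{D}}\,dt+\int_0^T \mathcal{E}(\phi(t),\psi u_n(t))\,dt=0,
\]
where I have used locality of $\mathcal{E}$ and the fact that $\psi\equiv 1$ on the spatial support of $\phi$ (so replacing $u_n$ by $\psi u_n$ in both terms is justified under the shorthand adopted after Definition \ref{lws-def}). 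The first term is the duality pairing of $\phi'\in L^2([0,T];\mathbb{D}^*)$ with $\psi u_n$, which converges by weak convergence of $\psi u_n$. For the second term, Cauchy–Schwarz shows that $w\mapsto \int_0^T \mathcal{E}(\phi(t),w(t))\,dt$ is a bounded linear functional on $L^2([0,T];\mathbb{D})$, so it too converges. Taking limits yields the same identity with $\psi u$ (equivalently $u$) in place of $\psi u_n$. Hence by Lemma \ref{weaker-solution}, $u$ is a local weak solution on $(0,T)\times V$, and since $V$ was an arbitrary open precompact set with $\closure V\subset U$, $u$ is a local weak solution on $(0,T)\times U$.

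The main obstacle I anticipate is conceptual rather than technical: ensuring that the localization $\psi u_n$ is genuinely bounded in $L^2([0,T];\mathbb{D})$ (handled by Caccioppoli) and that the weak limit is correctly identified as $\psi u$ without having \emph{a priori} access to $u\in L^2([0,T];\mathbb{D})$. Both steps follow cleanly once one uses dominated convergence to upgrade pointwise convergence to $L^2([0,T]\times X)$ convergence for the cut-off functions.
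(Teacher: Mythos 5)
Your argument is correct and follows essentially the same route as the paper: Caccioppoli to get a uniform bound on $\psi u_n$ in $L^2([0,T];\mathbb{D})$, extraction of a weak limit identified with $\psi u$, and passage to the limit in the formulation of Lemma \ref{weaker-solution} by continuity of its left-hand side in the solution variable. Your explicit identification of the weak limit via the embedding into $L^2([0,T]\times X)$ and dominated convergence is a detail the paper leaves implicit, but the proof is the same.
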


\begin{proof}
  Let $V$ be open and precompact with $\closure{V} \subset U$, and let
  $\psi$ be a nice cutoff function of $\closure{V}$ inside $U$.  By
  Lemma \ref{caccioppoli} we have $\norm{\psi u_n}_{L^2([0,T];
  \mathbb{D})} \le C M$ for all $n$.  Passing to a subsequence, we may
  assume $\psi u_n$ converges weakly to $\psi u$ in the Hilbert space
  $L^2([0,T]; \mathbb{D})$.  By Lemma \ref{weaker-solution}, it
  immediately follows that $\psi u$ is a local weak solution on $(0,T)
  \times V$ (the left side of (\ref{weaker-solution-eqn}), as a
  function of $u$, is a continuous linear functional on $L^2([0,T];
  \mathbb{D})$).  Since $V \subset U$ was arbitrary, we are done.
\end{proof}

We now record some properties of local weak solutions produced by the
heat semigroup $P_t$.

\begin{lemma}\label{Pt-lws}
  Let $f \in L^2(X,\mu)$, and for each $t > 0$ let $u(t) = P_t f$.
  Then $u(t) \in W^{1,2}([0,T]; \mathbb{D}, \mathbb{D}^*)$ for any
  $T$; in fact, $u(t) \in D(L)$ for every $t > 0$, and $u'(t) =
  Lu(t)$.  In particular, $u$ is a local weak solution on $(0, \infty)
  \times X$.
\end{lemma}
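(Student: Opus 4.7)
The plan is to prove all assertions via the spectral theorem applied to the positive self-adjoint generator $L$. Write $L = \int_0^\infty \lambda\, dE_\lambda$ and $P_t = \int_0^\infty e^{-t\lambda}\, dE_\lambda$. Because $\lambda^k e^{-t\lambda}$ is bounded on $[0,\infty)$ for each fixed $t > 0$ and $k \ge 0$, $P_t f$ lies in $D(L^k)$ for every $t > 0$ and every $f \in L^2(X,\mu)$; in particular $P_t f \in D(L) \subset \mathbb{D}$. Standard semigroup theory then gives the strong derivative identity $\frac{d}{dt} P_t f = -L P_t f$ in $L^2(X,\mu)$ for $t > 0$.

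Next I would verify the two integrability conditions defining $W^{1,2}([0,T];\mathbb{D},\mathbb{D}^*)$. Using spectral theory and $\mathcal{E}(g) = \|L^{1/2} g\|_2^2$,
\begin{equation*}
  \int_0^T \mathcal{E}(P_t f)\,dt = \int_0^\infty \frac{1-e^{-2T\lambda}}{2}\, d\|E_\lambda f\|^2 \le \frac{1}{2}\|f\|_2^2,
\end{equation*}
while $\int_0^T \|P_t f\|_2^2\, dt \le T\|f\|_2^2$, so $u \in L^2([0,T];\mathbb{D})$ (strong measurability into the separable Hilbert space $\mathbb{D}$ follows from weak measurability, which is immediate from continuity of $t \mapsto (P_t f, g)_{L^2}$ for each $g \in \mathbb{D}$, together with $\mathcal{E}_1$-finiteness for $t > 0$). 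For the derivative, the $\mathbb{D}^*$-norm of $-L P_t f \in L^2 \subset \mathbb{D}^*$ is controlled by duality: for $\phi \in \mathbb{D}$,
\begin{equation*}
  |(-L P_t f, \phi)_{\mathbb{D}^*,\mathbb{D}}| = |\mathcal{E}(P_t f, \phi)| \le \mathcal{E}(P_t f)^{1/2}\, \mathcal{E}_1(\phi)^{1/2},
\end{equation*}
so $\|L P_t f\|_{\mathbb{D}^*}^2 \le \mathcal{E}(P_t f)$, and the same spectral computation gives $\int_0^T \|L P_t f\|_{\mathbb{D}^*}^2\, dt \le \frac{1}{2}\|f\|_2^2$.

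I would then confirm that $-L P_t f$ really is the distributional time derivative of $u$ on $(0,T)$ as a $\mathbb{D}^*$-valued function. For any $\chi \in C_c^\infty((0,T))$ and $\phi \in \mathbb{D}$, applying the strong $L^2$-derivative identity for $t > 0$ and integrating by parts in the scalar variable $t$,
\begin{equation*}
  \int_0^T \chi'(t)(u(t),\phi)_{L^2}\, dt = -\int_0^T \chi(t)(-L P_t f, \phi)_{\mathbb{D}^*,\mathbb{D}}\, dt,
\end{equation*}
which identifies $u' = -L P_t f$ in the weak sense. Combined with the previous step, this yields $u \in W^{1,2}([0,T];\mathbb{D},\mathbb{D}^*)$ for every $T > 0$.

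Finally, for the local weak solution property, since $u \in W^{1,2}([0,T];\mathbb{D},\mathbb{D}^*)$ globally, I can take $u_K = u$ for every compact $K \subset (0,T) \times X$. For any test function $\phi \in W^{1,2}$ with compact support inside such a $K$, using $u'(t) = -L P_t f$ and $\mathcal{E}(P_t f, \phi(t)) = (L P_t f, \phi(t))_{L^2}$ (valid because $P_t f \in D(L)$ for $t > 0$, which covers the support of $\phi$),
\begin{equation*}
  \int_0^T (u'(t),\phi(t))_{\mathbb{D}^*,\mathbb{D}}\, dt + \int_0^T \mathcal{E}(u(t),\phi(t))\, dt = 0,
\end{equation*}
which is exactly (\ref{lws-eq}). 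The main technical points to handle carefully are the potential singularity at $t=0$ when $f \notin \mathbb{D}$ (resolved by the spectral bound $\int_0^T \mathcal{E}(P_t f)\, dt \le \tfrac{1}{2}\|f\|_2^2$) and the identification of the $\mathbb{D}^*$-weak derivative with $-L P_t f$ across the interval $[0,T]$ rather than just on $(0,T)$; both are dispatched by the integrability estimates above.
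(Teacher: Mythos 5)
Your proposal is correct and is exactly the argument the paper has in mind: the paper's proof of Lemma \ref{Pt-lws} consists of the single remark that it is ``a simple consequence of the spectral theorem,'' and your spectral estimates $\int_0^T \mathcal{E}(P_t f)\,dt \le \tfrac12\|f\|_2^2$ and $\int_0^T \|LP_tf\|_{\mathbb{D}^*}^2\,dt \le \tfrac12\|f\|_2^2$, together with the identification of the weak derivative and the verification of (\ref{lws-eq}), are precisely the details being elided. One small point in your favor: your sign $u'(t) = -LP_tf$ is the one consistent with the positivity convention for $L$ and with (\ref{lws-eq}); the ``$u'(t) = Lu(t)$'' in the lemma statement is a harmless sign slip.
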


\begin{proof}
  This is a simple consequence of the spectral theorem.
\end{proof}

For $\nu_n$ a sequence of finite positive Radon measures on $X$,
recall that $\nu_n \to \nu$ \textbf{weakly} if $\int_X f\,d\nu_n \to
\int_X f\,d\nu$ for every $f \in C_0(X)$, where $C_0(X)$ is the space
of continuous functions on $X$ which vanish at infinity (i.e. the
uniform closure of $C_c(X)$).  (In other words, this is weak-*
convergence in $C_0(X)^*$.)  Equivalently, $\nu_n \to \nu$ weakly iff
$\{\nu_n\}$ is bounded in total variation (i.e. $\sup_n \nu_n(X) <
\infty$) and $\int_X f\,d\nu_n \to \int_X f\,d\nu$ for every $f \in
C_c(X)$.  For a nonnegative measurable function $f$, we identify $f$
with the measure $f \,d\mu$, and note that the total variation of this
measure is $\norm{f}_{L^1(X,\mu)}$.

\begin{lemma}\label{Pt-weak-lim-zero}
  If $\nu$ is a compactly supported positive Radon measure, then $P_t \nu \to
  \nu$ weakly as $t \to 0$.  ($P_t \nu$ is as defined in (\ref{Pt-nu-def}).)
\end{lemma}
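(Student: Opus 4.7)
The plan is to reduce the problem to pointwise convergence of $P_t f$ for $f \in C_c(X)$ via a Fubini/symmetry argument, and then conclude using the Markovian bound plus dominated convergence against the finite measure $\nu$.

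First, I would verify the uniform total variation bound. Since $p \geq 0$, Fubini gives
\begin{equation*}
  \int_X P_t \nu \,d\mu = \int_X \left[\int_X p(t,x,y) \,\mu(dx)\right] \nu(dy) \leq \nu(X),
\end{equation*}
where the last inequality uses that $P_t$ is Markovian (so $\int_X p(t,x,y)\,\mu(dx) = \int_X p(t,y,x)\,\mu(dx) = P_t \mathbf{1}(y) \leq 1$ after invoking symmetry of $p$ in its spatial arguments, which follows from symmetry of $\mathcal{E}$ and hence of $P_t$). In particular, $\{P_t \nu\,d\mu\}_{t>0}$ is bounded in total variation.

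Next, I would test against $f \in C_c(X)$ and swap the order of integration. Using Fubini and the symmetry $p(t,x,y)=p(t,y,x)$,
\begin{equation*}
  \int_X f(x) P_t \nu(x) \,\mu(dx)
  = \int_X \left[\int_X f(x) p(t,y,x) \,\mu(dx)\right] \nu(dy)
  = \int_X P_t f(y) \,\nu(dy).
\end{equation*}
By Assumption \ref{Pt-cts-pointwise}, $P_t f(y) \to f(y)$ for every $y \in X$. Since $P_t$ is Markovian, $|P_t f(y)| \leq \|f\|_\infty$, and $\nu$ is a finite measure (being compactly supported and Radon), so dominated convergence yields $\int_X P_t f \,d\nu \to \int_X f \,d\nu$. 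Combined with the total variation bound, this gives the equivalent characterization of weak convergence: $P_t \nu(X) \leq \nu(X)$ is uniformly bounded and $\int f \,P_t\nu \,d\mu \to \int f\,d\nu$ for every $f \in C_c(X)$.

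There is essentially no hard step here; the only point that deserves care is that weak convergence requires testing against $C_0(X)$, not merely $C_c(X)$. This is handled automatically by the equivalent formulation recalled just before the lemma (uniformly bounded total variation plus convergence on $C_c(X)$ implies weak convergence), which is exactly what the two displays above establish. The key input is the pointwise continuity at $t=0$ provided by Assumption \ref{Pt-cts-pointwise}, which is precisely why that assumption was imposed: without it, one would only get convergence quasi-everywhere, which is insufficient to apply dominated convergence against a general compactly supported Radon measure $\nu$ (which may charge exceptional sets).
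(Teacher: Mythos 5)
Your proof is correct and follows essentially the same route as the paper: the Markovian property gives the total variation bound, Fubini plus symmetry of $P_t$ converts the pairing into $\int_X P_t f\,d\nu$, and Assumption \ref{Pt-cts-pointwise} with dominated convergence finishes. Your added remark about why pointwise (rather than quasi-everywhere) convergence is needed matches the paper's own comment following the lemma.
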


\begin{proof}
  It follows from the Markovian property of $P_t$ that $\norm{P_t
  \nu}_{L^1(X,\mu)} \le \nu(X)$, so $\{P_t \nu\}$ is bounded in total variation.
  
  For $f \in C_c(X)$, we have by Fubini's theorem and the symmetry of
  $P_t$ that $\int_X f(x) P_t \nu(x) \mu(dx) = \int_X P_t f(y)
  \nu(dy)$.  By Assumption \ref{Pt-cts-pointwise}, $P_t f \to f$
  pointwise, and the Markovian property gives $\norm{P_t f}_\infty \le
  \norm{f}_\infty < \infty$.  Thus by dominated convergence, $\int_X
  f(x) P_t \nu(x) \mu(dx) \to \int_X f(x) \nu(dx)$.
\end{proof}

Note that Assumption \ref{Pt-cts-pointwise} asserts that $P_t f(x) \to
f(x)$ for \emph{every} $x \in X$.  ``$\mu$-almost every $x$'' would not be
sufficient to establish Lemma \ref{Pt-weak-lim-zero}, as the measure $\nu$
could charge $\mu$-null sets.

\begin{lemma}\label{Pt-nu-solution}
  If $\nu$ is a compactly supported positive Radon measure, then $u(t,x) = P_t
  \nu(x)$ is a local weak solution on $(0,\infty) \times X$, and
  moreover $P_t \nu \in \mathbb{D}$ for each $t > 0$.
\end{lemma}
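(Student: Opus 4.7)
The plan is to exploit the semigroup factorization $P_t \nu = P_{t-\epsilon}(P_\epsilon \nu)$ for any $0 < \epsilon < t$, in which the right-hand side is $P_{t-\epsilon}$ applied to a genuine $L^2$ function $P_\epsilon \nu$. Once this is set up, both conclusions follow directly from Lemma~\ref{Pt-lws}.

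The critical first step is to prove $P_s \nu \in L^2(X,\mu)$ for every $s > 0$. Applying the Cauchy--Schwarz inequality with respect to the finite measure $\nu$ gives $(P_s \nu(x))^2 \le \nu(X) \int_X p(s,x,y)^2 \, \nu(dy)$. Integrating against $\mu$, swapping the order of integration (the integrand is nonnegative), and using the identity $\int_X p(s,x,y)^2 \, \mu(dx) = p(2s, y, y)$ (which combines symmetry of $p$ with Chapman--Kolmogorov) yields $\|P_s \nu\|_{L^2(X,\mu)}^2 \le \nu(X) \int_X p(2s, y, y) \, \nu(dy)$. Since $p$ is continuous by Assumption~\ref{cts-heat-kernel} and $\nu$ has compact support, $y \mapsto p(2s, y, y)$ is bounded on $\operatorname{supp} \nu$, and the integral is finite.

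Next, the pointwise semigroup identity $P_t \nu(x) = P_{t-\epsilon}(P_\epsilon \nu)(x)$ for a.e.\ $x$ follows from Fubini and the Chapman--Kolmogorov equation for $p$ (now legitimate since $P_\epsilon \nu \in L^2$). Taking $\epsilon = t/2$ gives $P_t \nu = P_{t/2}(P_{t/2} \nu)$, and Lemma~\ref{Pt-lws} yields $P_{t/2}(P_{t/2} \nu) \in D(L) \subset \mathbb{D}$, so $P_t \nu \in \mathbb{D}$. For the local weak solution claim, fix any compact $K \subset (0, \infty) \times X$ with time projection inside some $[a, b]$, $a > 0$, and pick $\epsilon \in (0, a)$. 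By Lemma~\ref{Pt-lws}, $\tilde u(s, x) := P_s(P_\epsilon \nu)(x)$ is a local weak solution on $(0, \infty) \times X$, and the semigroup identity shows it equals $P_{s + \epsilon} \nu(x)$. A time translation (combined, if needed, with a smooth time cutoff near the left endpoint) produces the required $u_K \in W^{1,2}([0, T]; \mathbb{D}, \mathbb{D}^*)$ agreeing with $P_t \nu$ on $K$ and satisfying the weak equation, so $P_t \nu$ is a local weak solution on $(0, \infty) \times X$.

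I do not expect a serious obstacle: the argument reduces to routine applications of Lemma~\ref{Pt-lws} once the $L^2$ bound is secured. The delicate point is really this first step, where one genuinely uses both the compact support of $\nu$ and the continuity of the heat kernel to convert a singular object (a measure) into a square-integrable function; without compact support of $\nu$, $P_s \nu$ could easily fail to be globally in $L^2$.
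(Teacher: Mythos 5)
Your proposal is correct and follows essentially the same route as the paper: establish $P_t\nu \in L^2(X,\mu)$ via Cauchy--Schwarz, the symmetry of $p$, and the Chapman--Kolmogorov identity (using compact support of $\nu$ and continuity of $p$), then factor $P_t\nu = P_{t-\epsilon}P_\epsilon\nu$ and invoke Lemma \ref{Pt-lws} with a time shift, letting $\epsilon$ be arbitrary. Your extra remarks on the a.e.\ semigroup identity and the choice $\epsilon = t/2$ for membership in $\mathbb{D}$ are fine elaborations of the same argument.
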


\begin{proof}
  It is sufficient to show that $P_t \nu \in L^2(X, \mu)$ for any $t >
  0$, since then we can write $P_t \nu = P_{t-\epsilon} P_{\epsilon}
  \nu$ and apply Lemma \ref{Pt-lws} (replacing $t$ by $t-\epsilon$) to
  get that $u$ is a local weak solution on $(\epsilon,\infty) \times
  X$, for arbitrary $\epsilon$.  If $\nu$ is supported on the compact
  set $K$, we have
  \begin{align*}
    \int_X |P_t \nu(x)|^2 \mu(dx) &= \int_X \abs{\int_K p(t,x,y)
    \nu(dy)}^2 \mu(dx) \\
    &\le \nu(K) \int_X \int_K p(t,x,y)^2 \nu(dy) \mu(dx) &&
    \text{(Cauchy--Schwarz)} \\
    &= \nu(K) \int_K \int_X p(t,x,y) p(t,y,x) \mu(dx) \nu(dy) &&
    \text{(by symmetry of $p$)} \\
    &= \nu(K) \int_K p(2t,y,y) \nu(dy) \\
    &\le \nu(K)^2 \sup_{y \in K} p(2t, y,y) < \infty
  \end{align*}
  since $p$ is continuous.
\end{proof}

\begin{lemma}\label{Pt-weak-cts}
  Let $\nu_n$ be a sequence of positive Radon measures supported in a single
  compact set $K \subset X$.  Suppose $\nu_n \to \nu$ weakly, and $t_n
  \to t \in (0,\infty)$.  Then $P_{t_n} \nu_n (x) \to
  P_t \nu(x)$ for each $x \in X$.
\end{lemma}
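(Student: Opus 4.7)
The plan is to control the difference $P_{t_n}\nu_n(x) - P_t\nu(x)$ by splitting it as
\begin{equation*}
P_{t_n}\nu_n(x) - P_t\nu(x) = \bigl[P_{t_n}\nu_n(x) - P_t\nu_n(x)\bigr] + \bigl[P_t\nu_n(x) - P_t\nu(x)\bigr],
\end{equation*}
and showing each bracket tends to zero. The first bracket isolates the effect of varying the time parameter against a bounded family of measures, while the second isolates weak convergence tested against the fixed continuous function $p(t,x,\cdot)$.

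For the first bracket, since $p$ is continuous on $(0,\infty) \times X \times X$ by Assumption \ref{cts-heat-kernel}, it is uniformly continuous on the compact set $[t/2, 2t] \times \{x\} \times K$. Because $t_n \to t > 0$, eventually $t_n \in [t/2, 2t]$, so $\sup_{y \in K} |p(t_n,x,y) - p(t,x,y)| \to 0$. On the other hand, since $\nu_n \to \nu$ weakly in $C_0(X)^*$, the Banach--Steinhaus principle yields $\sup_n \nu_n(X) < \infty$, and as each $\nu_n$ is supported in $K$ this gives a uniform bound on $\nu_n(K)$. The first bracket is at most this uniform bound times $\sup_{y \in K}|p(t_n,x,y) - p(t,x,y)|$, hence goes to $0$.

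For the second bracket, I first check that $\nu$ is also supported in $K$: if $f \in C_0(X)$ vanishes on a neighborhood of $K$, then $\int f\,d\nu_n = 0$ for every $n$, so weak convergence gives $\int f\,d\nu = 0$. Next, choose a cutoff $\chi \in C_c(X)$ with $\chi \equiv 1$ on $K$, and set $g(y) = \chi(y)\,p(t,x,y)$. Since $p(t,x,\cdot)$ is continuous and $\chi$ is compactly supported, $g \in C_c(X) \subset C_0(X)$. Because both $\nu_n$ and $\nu$ are concentrated on $K$,
\begin{equation*}
P_t\nu_n(x) = \int_X g\,d\nu_n \quad \text{and} \quad P_t\nu(x) = \int_X g\,d\nu,
\end{equation*}
so the definition of weak convergence directly yields $P_t\nu_n(x) \to P_t\nu(x)$.

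No single step is a genuine obstacle: the argument amounts to \emph{uniform continuity of the kernel on a compact space-time window} combined with \emph{weak convergence tested against a compactly supported continuous function}. The only mild subtlety is confirming the uniform mass bound on $\{\nu_n\}$ and that $\operatorname{supp}\nu \subset K$, both of which follow immediately from the paper's definition of weak convergence as weak-$\ast$ convergence in $C_0(X)^*$.
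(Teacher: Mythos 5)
Your proof is correct and follows essentially the same route as the paper: the same two-term decomposition, bounding the first term by $\sup_n\nu_n(K)$ times the uniform convergence of $p(t_n,x,\cdot)\to p(t,x,\cdot)$ on $K$, and handling the second term by weak convergence against the continuous kernel. Your additional care in checking $\operatorname{supp}\nu\subset K$ and multiplying by a cutoff to obtain a genuine $C_c(X)$ test function is a small detail the paper's terser argument leaves implicit, but it is not a different approach.
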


\begin{proof}
  Fix $x \in X$ and $\epsilon > 0$.  Write
  \begin{equation*}
    P_{t_n} \nu_n(x) - P_t \nu(x) = (P_{t_n} \nu_n(x) - P_t \nu_n(x))
    + (P_t \nu_n(x) - P_t \nu(x)).
  \end{equation*}
  For the first term, we have
  \begin{equation*}
    \abs{P_{t_n} \nu_n(x) - P_t \nu_n(x)} = \abs{\int_K
    (p(t_n,x,y)-p(t,x,y)) \nu_n(dy)} \le \nu_n(K) \sup_{y \in K} \abs{p(t_n,x,y)-p(t,x,y)}.
  \end{equation*}
  Since $\nu_n$ converges weakly, we have $\sup_n \nu_n(K) < \infty$.  And since $p$ is continuous, we
  have $p(t_n, x, \cdot) \to p(t,x,\cdot)$ uniformly on $K$ as $t_n
  \to t$.  So this term goes to zero.  The second term goes to zero by
  definition of weak convergence, since $p(t,x,\cdot)$ is a continuous
  function on $K$.
\end{proof}




An important fact about the heat semigroup is that $P_t f$ is the smallest of all nonnegative
local weak solutions which equal $f$ at time $t=0$.  Intuitively, $P_t
f$ is the solution which imposes Dirichlet conditions at the boundary
of $X$, so that heat flows out of $X$ as much as possible, and no heat
flows in.

\begin{proposition}\label{minimal}
  Let $u$ be a nonnegative local weak solution on $(-a,T) \times X$
  for some $-a < 0$,
  and suppose $f \in L^2(X, \mu)$ satisfies $f \le u(0)$ a.e.  Then
  $P_t f \le u$ a.e. on $[0, T) \times X$.
\end{proposition}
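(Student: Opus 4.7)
The plan is to establish the dual inequality
\begin{equation*}
\int_X P_t f \cdot g \, d\mu \le \int_X u(t)\, g \, d\mu
\end{equation*}
for every nonnegative $g \in C_c(X)$ and every $t \in (0, T)$; varying $g$ then yields $P_t f \le u(t)$ $\mu$-a.e. By symmetry of $P_t$, the LHS equals $\int f \cdot P_t g\, d\mu \le \int u(0)\, P_t g\, d\mu$, using $f \le u(0)$ and $P_t g \ge 0$. So it suffices to prove the ``conservation'' inequality $\int u(0)\, P_t g\, d\mu \le \int u(t)\, g\, d\mu$.

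Two easy reductions come first: since $P_t f \le P_t f^+$ and $f^+ \le u(0)$ (because $u(0) \ge 0$), I may assume $f \ge 0$. Writing $f_n = \min(f, n)\, \mathbf{1}_{K_n}$ for compact $K_n \uparrow X$, monotone convergence gives $P_t f_n \uparrow P_t f$, so I may further assume $f$ is bounded and compactly supported.

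The main step is a duality computation. Introduce the backward solution $h(s, x) := P_{t-s} g(x)$ for $s \in [0, t]$. By Lemma \ref{Pt-lws} (or the spectral theorem), $h(s) \in D(L)$ for $s < t$ with $h'(s) = L h(s)$, and $h \ge 0$. Formally $\Phi(s) := \int u(s)\, h(s)\, d\mu$ is conserved: $\Phi'(s) = -\mathcal{E}(u, h) + \mathcal{E}(h, u) = 0$. To make this rigorous when $u$ is not globally in $L^2$, let $\psi$ be a nice cutoff supplied by Assumption \ref{union-of-nice}, define $\Phi_\psi(s) := \int \psi\, u(s) \,h(s)\, d\mu$, and apply Lemma \ref{weaker-solution} with the test function $\chi(s) \psi(x) h(s,x)$ for $\chi \in C_c^\infty((0, t))$. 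Using that $\psi u(s) \in \mathbb{D}$ by niceness (via the construction of $u_K$), that $h(s) \in D(L)$ so $\int \psi u(s) \cdot L h(s)\,d\mu = \mathcal{E}(\psi u(s), h(s))$, and the Leibniz rule $\Gamma(\psi w, v) = \psi\, \Gamma(w,v) + w\, \Gamma(\psi, v)$, the bulk terms cancel and I obtain the distributional identity
\begin{equation*}
\Phi_\psi'(s) = \int_X u(s)\, d\Gamma(\psi, h(s)) - \int_X h(s)\, d\Gamma(\psi, u(s)),
\end{equation*}
whose right side is supported where $\psi$ is non-constant.

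The main obstacle is then to take $\psi_R \uparrow 1$ and control the annular error. Monotone convergence immediately gives $\Phi_{\psi_R}(s) \uparrow \Phi(s) \in [0, \infty]$, but showing the error $\Phi_{\psi_R}'$ vanishes in an integrated sense requires combining the local $L^\infty$ bound on $u$ from Harnack (Assumption \ref{harnack}), the Caccioppoli inequality (Lemma \ref{caccioppoli}) to control $\mathcal{E}(\psi_R u)$ in terms of $\sup u$, semigroup control on $h$, and a judicious choice of cutoffs $\psi_R$ (e.g., Lipschitz cutoffs in the intrinsic distance under (ID1), or CS$(\beta)$ cutoffs in the fractal-like setting). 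Once this is established, $\Phi$ is constant on $(0, t)$; letting $s \to 0^+$ and $s \to t^-$, using continuity of $u(s)$ as an $L^2_{\mathrm{loc}}$-valued function (from the hypothesis that $u$ extends below $s = 0$) together with $h(s) \to g$ at $s = t$ and $h(s) \to P_t g$ at $s = 0$ (via Lemma \ref{Pt-weak-lim-zero} or Assumption \ref{Pt-cts-pointwise}), yields the desired inequality as equality, completing the proof.
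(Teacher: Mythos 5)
There is a genuine gap, and it sits exactly where you wave your hands: the passage $\psi_R \uparrow 1$. Your error term $\int_X u(s)\,d\Gamma(\psi_R, h(s)) - \int_X h(s)\,d\Gamma(\psi_R, u(s))$ lives on the region where $\psi_R$ is non-constant, and nothing in the paper's hypotheses lets you kill it. A nonnegative local weak solution carries no global growth or integrability control whatsoever (avoiding such assumptions is the whole point of a Widder-type theorem), and the tools you invoke are all local and non-quantitative: the Harnack inequality of Assumption \ref{harnack} has a constant $C(U,K,a,b,c,d)$ with completely uncontrolled dependence on the set, the Caccioppoli constant in Lemma \ref{caccioppoli} likewise depends on $U,V,\psi,T$ in an unspecified way, and there is no assumed off-diagonal decay of $h(s)=P_{t-s}g$ (no Gaussian-type upper bounds; the cutoffs supplied by CS$(\beta)_{\mathrm{loc}}$ only exist at scales $R\le 1$). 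So $\esssup u$ and $\mathcal{E}_1(\psi_R u)$ on the annuli can grow arbitrarily fast relative to any decay you can extract for $h$, and the ``integrated error'' need not tend to zero. Worse, the statement you are driving at is false: $\Phi(s)=\int_X u(s)\,P_{t-s}g\,d\mu$ is in general \emph{not} constant, and the decomposition cannot end in equality. For the half-line example of Section \ref{unique-sec} (or the punctured plane), heat enters ``through the boundary at infinity'' and $\Phi$ is strictly increasing; only the inequality $\Phi(0^+)\le\Phi(t^-)$ can hold, and obtaining it requires giving the boundary contribution a sign, which a symmetric cutoff argument does not do.

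The paper's proof is built precisely to avoid this global integration by parts: it compares $u$ with the Dirichlet-restricted semigroups $P_t^U f$ on precompact open $U$, where the sign of the boundary term is encoded in the condition $v^+(t)\in\mathbb{D}(U)$ (via Lemma \ref{control-bdy-cond}), applies the parabolic maximum principle (Theorem \ref{max-principle}) to $v=P^U_t f-u_K$ using only a local representative $u_K$ of $u$, and then lets $U\uparrow X$ by monotonicity and Lemma \ref{exhaustion-semigroup} to recover $P_t f\le u$. If you want to salvage a duality argument, you would have to run it against $P^{U}_{t-s}g$ for $g\in C_c(U)$ with the test function supported in $U$ (so the cutoff error is replaced by a signed Dirichlet boundary term) and then exhaust $X$ --- at which point you have essentially reconstructed the paper's proof.
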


The proof requires considering what happens when we restrict our
attention to some open subset $U$ and impose Dirichlet boundary
conditions on $\partial U$.  We make use of the following results
which can be found in \cite{grigoryan-hu-off-diagonal}.  

\begin{definition}
  For $U \subset X$ open, let $\mathbb{D}(U) \subset \mathbb{D}$
  denote the $\mathcal{E}_1$-closure of $\mathbb{D} \cap C_c(U)$.
\end{definition}

Morally these are the functions from $\mathbb{D}$ satisfying Dirichlet
boundary conditions on $\partial U$.  There are several other possible
equivalent definitions.  Note in particular that if $f \in
\mathbb{D}(U)$, then $f = 0$ $\mu$-a.e. on $U^c$.  (This follows
because $f$ is an $\mathcal{E}_1$-limit of functions $f_n \in C_c(U)$,
and so a subsequence converges to $f$ almost everywhere.  In fact,
this can be upgraded to quasi-everywhere convergence.)

\begin{lemma}\label{control-bdy-cond}
  Suppose $U \subset X$ is open, and we have $f \in \mathbb{D}$, $g \in
  \mathbb{D}(U)$ with $0 \le f \le g$ a.e.  Then $f \in \mathbb{D}(U)$.
\end{lemma}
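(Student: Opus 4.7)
The plan is to use the standard potential-theoretic characterization of $\mathbb{D}(U)$: under Assumption~\ref{dirichlet-space-ass}, an element $h \in \mathbb{D}$ belongs to $\mathbb{D}(U)$ if and only if (any) quasi-continuous modification $\tilde h$ vanishes quasi-everywhere on $X \setminus U$. This equivalence is one of the basic facts about regular Dirichlet spaces (see Theorem~4.4.3 and Corollary~2.3.1 in \cite{fukushima-et-al}, and is also used in \cite{grigoryan-hu-off-diagonal}, which the statement cites). Once this characterization is available, the lemma becomes essentially a transfer of the a.e.\ inequality $0 \le f \le g$ to a q.e.\ inequality between quasi-continuous representatives.

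Concretely, I would pick quasi-continuous modifications $\tilde f$ and $\tilde g$ of $f$ and $g$. Since $g \in \mathbb{D}(U)$, the characterization gives $\tilde g = 0$ q.e.\ on $X \setminus U$. The hypothesis $0 \le f \le g$ $\mu$-a.e.\ upgrades to $0 \le \tilde f \le \tilde g$ q.e.; this is a standard consequence of the fact that, because capacity dominates $\mu$ on Borel sets in a regular Dirichlet space, any $\mu$-null set of the form $\{\tilde f > \tilde g\}$ (which, being the difference of two quasi-continuous functions, is quasi-open and $\mu$-null since $f \le g$ $\mu$-a.e.) has capacity zero. Combining the two displays, $\tilde f = 0$ q.e.\ on $X \setminus U$, and invoking the characterization again yields $f \in \mathbb{D}(U)$.

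The main obstacle here is entirely technical, namely, justifying the upgrade from $\mu$-a.e.\ to q.e.\ for inequalities between elements of $\mathbb{D}$; in the Fukushima framework this uses the Borel regularity of capacity together with the q.c.\ property of $\tilde f - \tilde g$, but is a one-line invocation in our setting. If one wished to avoid capacity altogether, an alternative route would be to pick $g_n \in C_c(U) \cap \mathbb{D}$ with $g_n \to g$ in $\mathcal{E}_1$ (and, after replacing by $g_n^+$, with $g_n \ge 0$), and to consider $f_n := f \wedge g_n \in \mathbb{D}$, which is supported inside the compact set $\operatorname{supp} g_n \subset U$ since $f \ge 0$; one would then show $f_n \to f$ in $\mathcal{E}_1$ using that $f \wedge g = f$ together with the fact that truncations by a fixed function are $\mathcal{E}_1$-continuous on the approximation argument. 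The drawback of this more elementary route is that the $f_n$ need not be continuous, so one must instead use an equivalent characterization of $\mathbb{D}(U)$ allowing approximation by not-necessarily-continuous elements of $\mathbb{D}$ with compact support inside $U$, and in a regular Dirichlet space this reduces back to the quasi-continuous description above. For this reason I would present the short q.e.\ argument.
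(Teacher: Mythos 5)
Your argument is correct, but it is not the route the paper takes: the paper simply cites Lemma~4.4 of \cite{grigoryan-hu-off-diagonal}, whose internal proof is essentially your ``elementary alternative'' (approximate $g$ by $g_n \in \mathbb{D}\cap C_c(U)$, truncate $f\wedge g_n^+$, note these lie in $\mathbb{D}(U)$ because a function of $\mathbb{D}$ with compact support in $U$ belongs to $\mathbb{D}(U)$ by regularity, and then pass to the limit via an energy bound and a weak-compactness/Banach--Saks step rather than by direct $\mathcal{E}_1$-convergence). Your primary argument instead runs through the potential-theoretic characterization $\mathbb{D}(U)=\{h\in\mathbb{D}: \tilde h=0 \text{ q.e.\ on } X\setminus U\}$ together with the upgrade of $0\le f\le g$ from $\mu$-a.e.\ to q.e.\ for quasi-continuous versions; both ingredients are standard for regular Dirichlet forms (\cite{fukushima-et-al}), so this yields a shorter proof at the price of importing the quasi-continuity machinery, whereas the truncation route is more self-contained and is what the cited reference actually does. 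One imprecision worth fixing in your write-up: the parenthetical justification ``capacity dominates $\mu$ on Borel sets'' argues in the wrong direction --- $\Cap\ge\mu$ only gives that polar sets are $\mu$-null, whereas what you need is that the $\mu$-null quasi-open set $\{\tilde f>\tilde g\}$ is polar. The correct ingredient is the standard lemma that a quasi-continuous function which is nonnegative $\mu$-a.e.\ is nonnegative q.e.\ (\cite[Lemma~2.1.4]{fukushima-et-al}); equivalently, one can argue directly that a $\mu$-negligible quasi-open set $A$ is polar, because the $1$-equilibrium potential of a small open set $\omega$ with $A\cup\omega$ open is $\ge 1$ $\mu$-a.e.\ on $A\cup\omega$, so $\Cap(A)\le\Cap(A\cup\omega)\le\Cap(\omega)$, and the definition of capacity of an open set only involves $\mu$-a.e.\ lower bounds. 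With that citation or argument substituted, your proof is complete.
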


\begin{proof}
See Lemma 4.4 of \cite{grigoryan-hu-off-diagonal}.
\end{proof}


The following fact can easily be verified:

\begin{proposition}
  The restriction $(\mathcal{E}, \mathbb{D}(U))$ of $\mathcal{E}$ to
  $\mathbb{D}(U)$ defines a regular, strictly local, symmetric Dirichlet form on
  $L^2(U, \mu) \subset L^2(X, \mu)$.
\end{proposition}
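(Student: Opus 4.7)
The plan is to verify each required property of $(\mathcal{E}, \mathbb{D}(U))$ by reducing it to the corresponding property of $(\mathcal{E}, \mathbb{D})$, exploiting the defining relation that $\mathbb{D}(U)$ is the $\mathcal{E}_1$-closure of $\mathbb{D} \cap C_c(U)$. Closedness and symmetry come essentially for free: $\mathbb{D}(U)$ is by definition a closed subspace of the Hilbert space $(\mathbb{D}, \mathcal{E}_1)$, and the $L^2(U,\mu)$ and $L^2(X,\mu)$ inner products agree on $\mathbb{D}(U)$ (since its elements vanish $\mu$-a.e.\ off $U$), so $(\mathbb{D}(U), \mathcal{E}_1)$ is complete when $\mathbb{D}(U)$ is regarded as a subspace of $L^2(U,\mu)$. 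For denseness of $\mathbb{D}(U)$ in $L^2(U,\mu)$, it suffices to show that $\mathbb{D} \cap C_c(U)$ is sup-norm dense in $C_c(U)$: given $f \in C_c(U)$ with support $K \subset U$, I would use regularity of $(\mathcal{E}, \mathbb{D})$ to approximate $f$ uniformly by some $g \in \mathbb{D} \cap C_c(X)$, then produce a Urysohn-type cutoff $\phi \in \mathbb{D} \cap C_c(X)$ with $\phi \equiv 1$ on $K$ and $\phi \equiv 0$ off $U$, and form $\phi g \in \mathbb{D} \cap C_c(U)$ using that $\mathbb{D} \cap L^\infty$ is an algebra; since $\phi \equiv 1$ on $\operatorname{supp} f$, the product $\phi g$ approximates $f$ uniformly.

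For the Markovian property, fix a normal contraction $T$ with $T(0) = 0$ and $f \in \mathbb{D}(U)$, and pick $f_n \in \mathbb{D} \cap C_c(U)$ with $f_n \to f$ in $\mathcal{E}_1$. Since $T(0) = 0$, the function $T \circ f_n$ has compact support contained in $\operatorname{supp} f_n \subset U$; it lies in $\mathbb{D}$ and satisfies $\mathcal{E}_1(T \circ f_n) \le \mathcal{E}_1(f_n)$ by the Markov property of $(\mathcal{E}, \mathbb{D})$. The sequence $\{T \circ f_n\}$ is therefore $\mathcal{E}_1$-bounded in $\mathbb{D}(U)$, so after passing to a subsequence it converges weakly in $\mathbb{D}$ to some $h$; since $T \circ f_n \to T \circ f$ in $L^2(X,\mu)$, we must have $h = T \circ f$. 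Thus $T \circ f \in \mathbb{D}(U)$, and $\mathcal{E}(T \circ f) \le \mathcal{E}(f)$ by lower semicontinuity.

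Regularity of $(\mathcal{E}, \mathbb{D}(U))$ then follows at once: the algebra $\mathbb{D} \cap C_c(U)$ sits inside $\mathbb{D}(U) \cap C_c(U)$, is $\mathcal{E}_1$-dense in $\mathbb{D}(U)$ by definition, and is uniformly dense in $C_c(U)$ by the cutoff argument of the first paragraph. Strict locality is inherited by inclusion: any $f, g \in \mathbb{D}(U) \subset \mathbb{D}$ with $f$ compactly supported and $g$ constant on a neighborhood of $\operatorname{supp} f$ already satisfy $\mathcal{E}(f,g) = 0$ by strict locality of $(\mathcal{E}, \mathbb{D})$. The main obstacle in this plan is the Urysohn-type construction of the cutoff $\phi$; this is a standard consequence of regularity of the original Dirichlet form combined with the algebra and lattice structure of $\mathbb{D} \cap L^\infty$, but it is the one place where a genuine external input is needed, and all other steps are essentially bookkeeping.
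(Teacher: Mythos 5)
Your proposal is correct: closedness and symmetry are inherited from $(\mathcal{E},\mathbb{D})$, the cutoff construction gives both the $L^2(U,\mu)$-density and the regularity core $\mathbb{D}\cap C_c(U)$, the weak-compactness argument shows normal contractions (in particular the unit contraction, which is all that is needed, and for which $T(0)=0$ is automatic) operate on $\mathbb{D}(U)$, and strict locality passes to the restricted form by inclusion. The paper gives no written proof here --- it simply asserts the fact "can easily be verified" --- and your argument is precisely the standard verification that is being left to the reader, with the only external input being the existence, for a regular Dirichlet form, of continuous cutoffs in $\mathbb{D}\cap C_c(X)$ equal to $1$ on $K$ and supported in $U$, which is indeed classical (Fukushima et al.).
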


\begin{proposition}
  Suppose $U \subset X$ is open and nice.  If $u$ is a local weak
  solution on $(0, T) \times U$ with respect to $(\mathcal{E},
  \mathbb{D})$, then it is also a local weak solution on $(0,T) \times
  U$ with respect to $(\mathcal{E}, \mathbb{D}(U))$.
\end{proposition}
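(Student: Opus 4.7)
The plan is to build the required localized functions by multiplying in space by a nice cutoff function supplied by the niceness of $U$. Fix a compact $K \subset (0,T) \times U$. I choose a compact $K_1 \subset U$ whose interior contains the spatial projection of $K$, and $0 < a < b < T$ with the time projection of $K$ contained in $(a,b)$. By Definition \ref{nice-def}, let $\psi$ be a nice cutoff function for $K_1$ inside $U$, and set $K_\psi = \operatorname{supp} \psi \subset U$. Then $K' := [a,b] \times K_\psi$ is compact in $(0,T) \times U$, and by hypothesis there is $u_{K'} \in W^{1,2}([0,T]; \mathbb{D}, \mathbb{D}^*)$ with $u_{K'} = u$ a.e.\ on $K'$ satisfying (\ref{lws-eq}) against any test function supported in $K'$. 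I take $\tilde u_K(t) := \psi\, u_{K'}(t)$ as the candidate.

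For the basic structural properties: $\tilde u_K = u$ a.e.\ on $K$, since $\psi \equiv 1$ on $K_1$ and $u_{K'} = u$ on $K'$. The nice cutoff property (Definition \ref{cutoff-def}(\ref{cutoff-preserve-domain})) gives $\psi u_{K'}(t) \in \mathbb{D}$ with $\mathcal{E}_1(\psi u_{K'}(t)) \le C_\psi \mathcal{E}_1(u_{K'}(t))$, so $\tilde u_K \in L^2([0,T]; \mathbb{D})$. I define the distributional time derivative by $(\tilde u_K'(t), f)_{\mathbb{D}^*, \mathbb{D}} := (u_{K'}'(t), \psi f)_{\mathbb{D}^*, \mathbb{D}}$ for $f \in \mathbb{D}$; the same cutoff bound makes this bounded on $\mathbb{D}$, so $\tilde u_K' \in L^2([0,T]; \mathbb{D}^*)$. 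Since $\mathbb{D}(U) \hookrightarrow \mathbb{D}$ continuously, restriction puts $\tilde u_K' \in L^2([0,T]; \mathbb{D}(U)^*)$.

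For the weak equation: let $\phi \in W^{1,2}([0,T]; \mathbb{D}(U), \mathbb{D}(U)^*)$ be compactly supported in $K$. Regarding $\phi$ as $\mathbb{D}$-valued via $\mathbb{D}(U) \subset \mathbb{D}$, it is a legitimate test function for $u_{K'}$. Since $\operatorname{supp}\phi(t) \subset \operatorname{int}(K_1) \subset \{\psi = 1\}$, we have $\psi \phi(t) = \phi(t)$, giving $(\tilde u_K'(t), \phi(t))_{\mathbb{D}(U)^*, \mathbb{D}(U)} = (u_{K'}'(t), \phi(t))_{\mathbb{D}^*, \mathbb{D}}$. By strict locality of $\mathcal{E}$, the energy measure $\Gamma(\psi u_{K'}(t) - u_{K'}(t), \phi(t))$ vanishes on $\operatorname{int}(K_1)$ (the first argument is locally constant there) and on $X \setminus \operatorname{supp}\phi(t)$ (the second argument vanishes there); these open sets cover $X$, so $\mathcal{E}(\tilde u_K(t), \phi(t)) = \mathcal{E}(u_{K'}(t), \phi(t))$. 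Integrating over $t$ and invoking (\ref{lws-eq}) for $u_{K'}$ yields the desired identity for $\tilde u_K$.

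The main obstacle is showing $\tilde u_K(t) \in \mathbb{D}(U)$ (not merely $\mathbb{D}$) for a.e.\ $t$. This reduces to the standard fact that any $f \in \mathbb{D}$ with compact support inside $U$ belongs to $\mathbb{D}(U)$. One approach uses regularity of $(\mathcal{E}, \mathbb{D})$ to approximate $u_{K'}(t)$ in $\mathcal{E}_1$-norm by $h_n \in \mathbb{D} \cap C_c(X)$; the nice cutoff bound gives $\psi h_n \to \psi u_{K'}(t)$ in $\mathcal{E}_1$, and provided $\psi$ can be chosen continuous (as is standard in all examples of Section \ref{example-sec}), each $\psi h_n$ lies in $\mathbb{D} \cap C_c(U)$, so $\psi u_{K'}(t) \in \mathbb{D}(U)$ by definition. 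Alternatively, Lemma \ref{control-bdy-cond} can be invoked after first verifying that $\psi$ itself lies in $\mathbb{D}(U)$ and reducing to bounded $u_{K'}(t)$ by truncation, to dominate the positive and negative parts of $\psi u_{K'}(t)$ by multiples of $\psi$.
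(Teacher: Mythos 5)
Your argument follows essentially the same route as the paper's proof: take the localized function on a slightly larger compact set, multiply by a nice cutoff $\psi$ of the spatial part of $K$ inside $U$, check that $\psi u_{K'}$ lies in $W^{1,2}([0,T];\mathbb{D}(U),\mathbb{D}(U)^*)$ and agrees with $u$ a.e.\ on $K$, and observe that test functions for $(\mathcal{E},\mathbb{D}(U))$ compactly supported in $K$ are also admissible test functions for $(\mathcal{E},\mathbb{D})$, so the weak equation transfers by locality. The only difference is that you spell out the membership $\psi u_{K'}(t)\in\mathbb{D}(U)$ (which the paper dismisses as ``easy to see'') by reducing it to the standard fact that an element of $\mathbb{D}$ with compact support inside $U$ belongs to $\mathbb{D}(U)$, a fact most cleanly justified via the quasi-everywhere-vanishing characterization of $\mathbb{D}(U)$ rather than via continuity of $\psi$, which Definition \ref{cutoff-def} does not guarantee.
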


\begin{proof}
  Let $K \subset (0,T) \times U$ be compact; without loss of
  generality we can take $K = [a,b] \times K_1$.  Then there exists
  $u_K \in W^{1,2}([0,T]; \mathbb{D}, \mathbb{D}^*)$ with $u_K = u$
  a.e. on $K$.  Set $\tilde{u}_K = \psi u_K$, where $\psi$ is a nice
  cutoff function of $K_1$ inside $U$.  Then it is easy to see that
  $\tilde{u}_K \in W^{1,2}([0,T]; \mathbb{D}(U), \mathbb{D}(U)^*)$.
  If $\phi \in W^{1,2}([0,T]; \mathbb{D}(U), \mathbb{D}(U)^*)$ with
  compact support inside $K$, then we also have $\phi \in
  W^{1,2}([0,T]; \mathbb{D}, \mathbb{D}^*)$, and so (\ref{lws-eq})
  holds.
\end{proof}

$\mathbb{D}(U)$ gives us a notion of ``functions vanishing on
$\partial U$'', which allows one to state the following parabolic
maximum principle.  A similar statement is proved in \cite[Proposition
4.11]{grigoryan-hu-off-diagonal}.

\begin{theorem}
  \label{max-principle}
  Let $U \subset X$ be open.  Suppose $u \in W^{1,2}([0,T];
  \mathbb{D}, \mathbb{D}^*)$ satisfies:
  \begin{enumerate}
    \item $u(0) \le 0$ a.e. (recall $u \in C([0,T]; L^2(X,\mu))$ so
    $u(0)$ is well defined as an $L^2(X,\mu)$ function);
    \item $u(t)^+ \in \mathbb{D}(U)$ for a.e. $t$;
    \item \label{max-test-function} For every $\phi \in L^2([0,T]; \mathbb{D}(U))$ which
    vanishes outside some $(\delta, T-\delta)$, we have
    \begin{equation}\label{u-max-weak}
      \int_0^T \left[(u'(t), \phi(t))_{\mathbb{D}^*, \mathbb{D}} +
      \mathcal{E}(u(t), \phi(t))\right]\,dt = 0.
    \end{equation}
  \end{enumerate}
  Then $u \le 0$ a.e. on $[0,T] \times U$.
\end{theorem}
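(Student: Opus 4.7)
The plan is to test equation~(\ref{u-max-weak}) with $\phi(t) = \chi_\delta(t) u^+(t)$, where $\chi_\delta$ is a piecewise linear time cutoff approximating $\mathbf{1}_{(0,s)}$, and deduce that $t \mapsto \|u^+(t)\|_{L^2(X,\mu)}^2$ is non-increasing.  Combined with $\|u^+(0)\|_{L^2}^2 = 0$ from hypothesis~(1), this will force $u \le 0$ almost everywhere.

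To check admissibility of $\phi$: since $s \mapsto s^+$ is a normal contraction, the Markovian property of $(\mathcal{E}, \mathbb{D})$ gives $\mathcal{E}_1(u^+(t)) \le \mathcal{E}_1(u(t))$, so $u^+ \in L^2([0,T]; \mathbb{D})$; hypothesis~(2) then places $u^+ \in L^2([0,T]; \mathbb{D}(U))$.  Fix $s \in (0,T)$ and let $\chi_\delta \in W^{1,\infty}([0,T])$ equal $1$ on $[3\delta/2, s-\delta]$, interpolate linearly on $[\delta/2, 3\delta/2]$ and $[s-\delta, s]$, and vanish outside $[\delta/2, s]$.  For $\delta$ small enough, $\phi = \chi_\delta u^+ \in L^2([0,T]; \mathbb{D}(U))$ vanishes outside $(\delta/2, T - \delta/2)$, so it is an admissible test function in~(\ref{u-max-weak}).

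The computation now rests on two ingredients.  First, a Gelfand-triple chain rule for $\Phi(u) = (u^+)^2$: for $u \in W^{1,2}([0,T]; \mathbb{D}, \mathbb{D}^*)$, the function $t \mapsto \|u^+(t)\|_{L^2}^2$ is absolutely continuous with $\frac{d}{dt}\|u^+(t)\|_{L^2}^2 = 2(u'(t), u^+(t))_{\mathbb{D}^*, \mathbb{D}}$.  This generalizes the product rule~(\ref{product-rule}) and is obtained by mollifying $u$ in time, in the manner of \cite[\S 25]{wloka-book}.  Second, strict locality of $(\mathcal{E}, \mathbb{D})$ gives the truncation identity $\mathcal{E}(u^+, u^-) = 0$, a standard consequence of the chain rule for energy measures (see Appendix~\ref{app-energy-measure}); since $u = u^+ - u^-$, this yields $\mathcal{E}(u, u^+) = \mathcal{E}(u^+, u^+) \ge 0$.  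Inserting $\phi = \chi_\delta u^+$ into~(\ref{u-max-weak}) and integrating by parts in $t$ using $\chi_\delta(0) = \chi_\delta(T) = 0$ gives
\begin{equation*}
\int_0^T \chi_\delta'(t) \|u^+(t)\|_{L^2}^2\,dt = 2\int_0^T \chi_\delta(t) \mathcal{E}(u^+(t), u^+(t))\,dt \ge 0.
\end{equation*}

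Finally, since $W^{1,2} \hookrightarrow C([0,T]; L^2(X,\mu))$ makes $t \mapsto \|u^+(t)\|_{L^2}^2$ continuous, letting $\delta \to 0$ converts the two narrow spikes of $\chi_\delta'$ on $[\delta/2, 3\delta/2]$ and $[s-\delta, s]$ into approximate delta functions at $t=0$ and $t=s$, so the left side above tends to $\|u^+(0)\|_{L^2}^2 - \|u^+(s)\|_{L^2}^2$.  Hypothesis~(1) gives $\|u^+(0)\|_{L^2}^2 = 0$, hence $\|u^+(s)\|_{L^2}^2 \le 0$ for every $s \in (0,T)$, which is the desired conclusion.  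The main obstacle is the nonlinear chain rule for $\|u^+(t)\|_{L^2}^2$: though classical in concrete PDE settings, its justification for the non-smooth truncation $s \mapsto (s^+)^2$ in the abstract Gelfand triple requires a careful mollification and passage-to-the-limit argument that is somewhat more delicate than the linear product rule~(\ref{product-rule}).
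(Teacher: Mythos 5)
Your argument is correct, but it takes a genuinely different route from the paper's. You run the classical truncation method: test (\ref{u-max-weak}) with $\chi_\delta(t)\,u^+(t)$, use $\mathcal{E}(u(t),u^+(t))\ge 0$, and the Gelfand-triple chain rule $\tfrac{d}{dt}\|u^+(t)\|_{L^2}^2=2(u'(t),u^+(t))_{\mathbb{D}^*,\mathbb{D}}$. The paper instead follows Grigor'yan--Hu: it tests with $\chi(t)\varphi(u(t))$ for the \emph{smooth} auxiliary pair $\varphi,\Phi$ (with $\varphi=2\Phi\Phi'$), proves absolute continuity of $t\mapsto\|\Phi(u(t))\|_{L^2}^2$ (Lemma \ref{Phi-lemma}) via the $C^1$-composition chain rule (Lemma \ref{chain-rule}) together with (\ref{gh43}), and concludes from $a'\le 0$, $a(0)=0$, $a\ge 0$. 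The smooth $\varphi$ is introduced precisely to sidestep the non-smooth truncation chain rule that you defer; so the one statement you leave unproved is exactly the crux, and you should be aware of where the delicacy lies: $v\mapsto v^+$ is \emph{not} known to be strongly $\mathcal{E}_1$-continuous on $\mathbb{D}$ in this generality, so in the mollification limit you can only use that $u_\epsilon(t)\to u(t)$ in $\mathcal{E}_1$-norm implies $u_\epsilon^+(t)\to u^+(t)$ $\mathcal{E}_1$-\emph{weakly} (\cite[Theorem 1.4.2 (v)]{fukushima-et-al}), paired with strong $\mathbb{D}^*$-convergence of $u_\epsilon'(t)$ and a Pratt-type dominated convergence using $\mathcal{E}_1(u_\epsilon^+(t))\le\mathcal{E}_1(u_\epsilon(t))$. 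With that device (which is exactly the mechanism in the paper's proof of Lemma \ref{Phi-lemma}), and replacing the paper's Lemma \ref{chain-rule} by the elementary fact that $v\mapsto\|v^+\|_{L^2}^2$ is Fr\'echet differentiable on $L^2(X,\mu)$ with derivative $2v^+$ (valid since $s\mapsto(s^+)^2$ has a $2$-Lipschitz derivative), your deferred lemma does hold and your proof closes; so your route is viable and arguably more elementary, at the cost of re-proving a version of Lemma \ref{Phi-lemma} for the non-smooth truncation, whereas the paper's route needs only the $C^1$ chain rule. Two minor points: you do not need strict locality here, since for any Dirichlet form $\mathcal{E}(u^+,u^-)\le 0$ (expand $\mathcal{E}(|u|)\le\mathcal{E}(u)$), which already gives $\mathcal{E}(u,u^+)\ge\mathcal{E}(u^+)\ge 0$ -- and if you do invoke the energy-measure identity $\Gamma(u^+,u^-)=0$, note that the facts in Appendix \ref{app-energy-measure} are stated for bounded functions, so a truncation step would be required; and you should record that $t\mapsto u^+(t)$ is strongly measurable as a $\mathbb{D}$-valued map (via Pettis' theorem and the same weak-convergence fact) before asserting $u^+\in L^2([0,T];\mathbb{D}(U))$.
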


The proof is a fairly straightforward adaptation of the argument in
\cite{grigoryan-hu-off-diagonal} and is relegated to Appendix
\ref{app-max-principle}.

Let $P_t^U$ be the semigroup generated by $(\mathcal{E},
\mathbb{D}(U))$.  Technically it is only a semigroup on $L^2(U, \mu)$,
but it can be extended to $L^2(X, \mu)$ in the obvious way (by
defining $P_t^U f = P_t^U (f|_U)$).  It is strongly continuous only on
$L^2(U, \mu)$.  By the spectral theorem we have $P_t^U f \in
\mathbb{D}(U)$ for all $f \in L^2(X, \mu)$.

Regularity says that the semigroup $P_t$ (and hence the Dirichlet
form) is determined by its behavior on sufficiently large open sets,
as made precise in the following lemma.

\begin{lemma}[{\cite[Lemma 4.17]{grigoryan-hu-off-diagonal}}]\label{exhaustion-semigroup}
  If $\{U_i\}_{i=1}^\infty$ is an increasing sequence of open subsets
  of $X$ with $X = \bigcup_{i=1}^\infty U_i$, then for any $t > 0$ and $0 \le f \in L^2(X, \mu)$, we have
  $P_t^{U_i} f \to P_t f$ $\mu$-a.e.
\end{lemma}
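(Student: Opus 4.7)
The plan is to reduce the claimed a.e.\ convergence to an $L^2(X,\mu)$-convergence by exploiting monotonicity in $i$, and then to identify the $L^2$-limit with $P_t f$ via strong resolvent convergence of the generators.

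First, I would show that for $0 \le f \in L^2(X,\mu)$ and $t > 0$, the sequence $v_i := P_t^{U_i} f$ is monotone increasing in $i$ and pointwise dominated by $P_t f$. Since $U_i \subset U_{i+1}$ implies $\mathbb{D}(U_i) \subset \mathbb{D}(U_{i+1})$, both $P_t^{U_i} f$ and $P_t^{U_{i+1}} f$ lie in $\mathbb{D}$, and $w := P_t^{U_{i+1}} f - P_t^{U_i} f$ satisfies $w(0) = 0$ in $L^2$. The inequality $w \ge -P_t^{U_i} f$ gives $0 \le w^- \le P_t^{U_i} f \in \mathbb{D}(U_i)$, so Lemma \ref{control-bdy-cond} yields $w^-(t) \in \mathbb{D}(U_i)$ for a.e.\ $t$. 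Moreover, since $\mathbb{D}(U_i) \subset \mathbb{D}(U_{i+1})$, the weak heat equations satisfied by $P_t^{U_i} f$ and $P_t^{U_{i+1}} f$ both apply to test functions in $L^2([0,T]; \mathbb{D}(U_i))$, and so does their difference. Applying Theorem \ref{max-principle} to $-w$ on $U = U_i$ yields $w \ge 0$ a.e.\ on $[0,T] \times U_i$; since $P_t^{U_i} f = 0$ a.e.\ on $U_i^c$ (as $P_t^{U_i} f \in \mathbb{D}(U_i)$), we obtain $v_i \le v_{i+1}$ a.e.\ on $X$. The same argument with $U_{i+1}$ replaced by $X$ gives $v_i \le P_t f$ a.e. By dominated convergence, $v_i \uparrow v$ both pointwise a.e.\ and in $L^2(X,\mu)$ for some $v \le P_t f$.

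The key step is to identify $v$ with $P_t f$, which I would do by proving strong $L^2$-convergence $P_t^{U_i} f \to P_t f$. For nonnegative self-adjoint operators this reduces, via the Trotter--Kato theorem (equivalently, by applying the spectral theorem to $\lambda \mapsto e^{-t\lambda}$), to strong resolvent convergence $(L_{U_i}+\lambda)^{-1} g \to (L+\lambda)^{-1} g$ in $L^2$ for every $g \in L^2(X,\mu)$ and $\lambda > 0$. The resolvent $(L+\lambda)^{-1} g$ is the unique minimizer over $\mathbb{D}$ of the strictly convex coercive functional
\[
\Phi(u) = \mathcal{E}(u,u) + \lambda \|u\|_{L^2}^2 - 2 (g, u)_{L^2},
\]
while $(L_{U_i}+\lambda)^{-1} g$ is the minimizer of $\Phi$ over $\mathbb{D}(U_i) \subset \mathbb{D}$. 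Regularity of $(\mathcal{E}, \mathbb{D})$ gives $\mathcal{E}_1$-density of $C_c(X) \cap \mathbb{D}$ in $\mathbb{D}$, and any $h \in C_c(X) \cap \mathbb{D}$ lies in $\mathbb{D}(U_i)$ once $\operatorname{supp} h \subset U_i$, which happens for all sufficiently large $i$ by compactness and $U_i \uparrow X$. Hence $\bigcup_i \mathbb{D}(U_i)$ is $\mathcal{E}_1$-dense in $\mathbb{D}$, and a standard direct-method argument (using that $\Phi(u) - \Phi(u^*) \ge \min(1,\lambda) \mathcal{E}_1(u-u^*, u-u^*)$ for the minimizer $u^*$) then yields convergence of the minimizers in $\mathcal{E}_1$-norm, hence in $L^2$.

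Combining the two $L^2$-limits forces $v = P_t f$ a.e., and together with the pointwise monotonicity this is the claimed a.e.\ convergence $P_t^{U_i} f \uparrow P_t f$. The main obstacle is the monotonicity step: applying Theorem \ref{max-principle} requires carefully verifying that $w^-(t) \in \mathbb{D}(U_i)$ for a.e.\ $t$ (where Lemma \ref{control-bdy-cond} is essential) and that $w$ satisfies the correct weak equation against all test functions in $L^2([0,T]; \mathbb{D}(U_i))$. Once this is done, the identification of the limit is a standard variational-convergence exercise.
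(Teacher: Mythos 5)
Your argument is a genuine substitute for a proof the paper does not give: Lemma \ref{exhaustion-semigroup} is simply quoted from \cite[Lemma 4.17]{grigoryan-hu-off-diagonal}, so there is no in-text argument to compare against, and your two-step strategy (monotonicity via the parabolic maximum principle, then identification of the limit through the variational characterization of the resolvents) is a correct and fairly standard way to prove it with the tools the paper does develop. The monotonicity step is exactly parallel to the paper's proof of Proposition \ref{minimal}: the use of Lemma \ref{control-bdy-cond} to get $w^-(t)\in\mathbb{D}(U_i)$ and the observation that test functions in $L^2([0,T];\mathbb{D}(U_i))$ kill both weak equations are the right checks. One small slip: $w(0)=f1_{U_{i+1}}-f1_{U_i}=f1_{U_{i+1}\setminus U_i}$ is not $0$ in general, but it is nonnegative, which is precisely what applying Theorem \ref{max-principle} to $-w$ requires, so nothing is lost. (Verifying $w\in W^{1,2}([0,T];\mathbb{D},\mathbb{D}^*)$ up to $t=0$ rests on the same inclusion $W^{1,2}([0,T];\mathbb{D}(U),\mathbb{D}(U)^*)\subset W^{1,2}([0,T];\mathbb{D},\mathbb{D}^*)$ that the paper itself invokes in proving Proposition \ref{minimal}, so you are on the same footing as the text there.)

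The one step you should not wave through is the passage from strong resolvent convergence to convergence of the semigroups at a fixed $t>0$. The operators $L_{U_i}$ act on $L^2(U_i,\mu)$, not on $L^2(X,\mu)$, and the zero-extended semigroups $P_t^{U_i}$ are not strongly continuous at $t=0$ on $L^2(X,\mu)$ (they converge to $f1_{U_i}$, not $f$), so the textbook Trotter--Kato theorem does not apply verbatim, and ``applying the spectral theorem to $\lambda\mapsto e^{-t\lambda}$'' is not by itself an argument when two different operators are involved. Two standard repairs are available. Either invoke the monotone convergence theorem for quadratic forms (Kato): the forms $\mathcal{E}$ with increasing domains $\mathbb{D}(U_i)$ decrease, and their limit form is $(\mathcal{E},\mathbb{D})$ precisely because of the $\mathcal{E}_1$-density of $\bigcup_i\mathbb{D}(U_i)$ that you establish from regularity, which yields strong convergence of the zero-extended semigroups for each $t>0$. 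Or upgrade your resolvent convergence by hand: for fixed $t>0$ write $e^{-tL_{U_i}}$ as the limit of $(I+(t/n)L_{U_i})^{-n}$, use that $\sup_{s\ge0}|(1+s/n)^{-n}-e^{-s}|\to0$ uniformly over all nonnegative self-adjoint operators, and pass to the limit in $i$ for each fixed $n$, since powers of strongly convergent contractions converge strongly and $(I+(t/n)L_{U_i})^{-1}$ (zero-extended) is exactly the resolvent you have already shown to converge. With either patch, combining the monotone a.e.\ convergence with the $L^2$-identification of the limit does prove the lemma; the resulting argument is in the same spirit as the proof in \cite{grigoryan-hu-off-diagonal}.
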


Combining these, we may prove Proposition \ref{minimal}.

\begin{proof}[Proof of Proposition \ref{minimal}]
  Let $u$ be a nonnegative local weak solution on $(-a,T) \times
  X$, and suppose $f \in L^2(X, \mu)$ satisfies $f \le u(0)$ a.e.
  Since $P_t f \le P_t f_+$, we can assume without loss of generality
  that $f \ge 0$ a.e.  (We shall only apply the theorem with
  nonnegative $f$ anyway.)

  Let $U$ be an open precompact set, and let $T' < T$ be arbitrary.
  Choose a nonnegative function $u_K \in W^{1,2}([0,T']; \mathbb{D},
  \mathbb{D}^*)$ which agrees with $u$ on some compact neighborhood of
  $[0, T'] \times \closure{U}$ and set $v(t) = P^U_t f -
  u_K(t) \in W^{1,2}([0,T']; \mathbb{D}, \mathbb{D}^*)$ (because
  $W^{1,2}([0,T']; \mathbb{D}(U), \mathbb{D}(U)^*) \subset
  W^{1,2}([0,T']; \mathbb{D}, \mathbb{D}^*)$).
  We claim $v$ satisfies the hypotheses of Theorem
  \ref{max-principle}.  $v(0) \le 0$ is clear since $u_K(t)$ and
  $P^U_t f$ are both continuous in $L^2$ as $t \downarrow 0$, and $f
  \le u(0)$.  Since $0 \le v^+(t) \le P^U_t f$ for almost every $t$,
  and $P_t^U f \in \mathbb{D}(U)$, we
  have $v^+(t) \in \mathbb{D}(U)$ for almost every $t \in [0,T']$ by Lemma
  \ref{control-bdy-cond}.  Finally, suppose $\phi \in L^2([0,T];
  \mathbb{D}(U))$ with $\phi(t) = 0$ outside some $(\delta,
  T'-\delta)$.  
  Since $\phi(t)$ is supported inside $\closure{U}$ for almost every
  $t$, $\phi$ is compactly supported inside $(0,T') \times X$.  Thus
  (\ref{u-max-weak}), with $T$ replaced by $T'$, holds for $u_K$
  (since $u$ is a local weak solution) and also for $P^U_t f$ (by
  spectral theory), so it holds for $v$.
  Thus Theorem \ref{max-principle} applies and we have $v \le
  0$ on $[0,T'] \times U$, which is to say $u(t) \ge P^U_t f$.
  Letting $T'  \to T$ we have the same on $[0,T) \times U$, and
  outside of $U$ we have $u(t) \ge 0 = P_t^U f$, so in fact $u(t) \ge
  P^U_t f$ on $[0,T) \times X$.

  Now take an increasing sequence of open precompact subsets $U_i
  \uparrow X$ (this is possible in any separable locally compact
  metric space).  We have $P_t^{U_i} f \le u$ for each $i$, and
  $P_t^{U_i} f \to P_t f$ a.e. by Theorem \ref{exhaustion-semigroup},
  so the proof is complete.
\end{proof}

In the following lemma, we show that nonnegative local weak solutions
have bounded $L^1$ norms on compact sets $K$ near the initial time.
This says, in some sense, that heat cannot flow out of $K$ too
rapidly.  We will use this fact in conjunction with weak compactness
to produce the measure $\nu$ in the main theorem.

\begin{proposition}\label{u-local-L1-bounded}
  Let $u$ be a nonnegative local weak solution on $(0,T) \times X$,
  and let $K \subset X$ be compact.  Then for any $T' < T$, we have
  \begin{equation*}
    \sup_{t \in (0, T')} \int_{K} u(t,y)\,\mu(dy) < \infty.
  \end{equation*}
\end{proposition}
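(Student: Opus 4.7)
I will use Proposition~\ref{minimal}, which provides a one-sided comparison between nonnegative local weak solutions and the heat semigroup, to propagate a bound on $u$ at a fixed later time $t_0 \in (T',T)$ backwards to times $t_* \in (0,T')$. The mechanism combines the shift-invariance of the local-weak-solution property with the strict positivity and continuity of the heat kernel $p$, which together yield a uniform positive lower bound on $P_\tau \mathbf{1}_K$ when $\tau$ is kept away from $0$.

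Concretely, first fix $t_0 \in (T', T)$ for which $\int_K u(t_0,\cdot)\,d\mu < \infty$; such $t_0$ exists because Assumption~\ref{harnack} applied with $U = X$, the compact set $K$, and times $[a,b] \ni t_0$, $[c,d] \subset (b,T)$ shows $u$ is essentially bounded on $[a,b] \times K$, so $\int_K u(t_0,\cdot)\,d\mu$ is finite for a.e.\ $t_0 \in [a,b]$. For each $t_* \in (0,T')$, the time-shifted function $v(\tau, y) := u(t_* + \tau, y)$ is a nonnegative local weak solution on $(-t_*, T - t_*) \times X$ with $v(0) = u(t_*)$. For $N > 0$, the truncation $f_N := (u(t_*) \wedge N)\mathbf{1}_K$ is bounded and compactly supported, hence in $L^2(X,\mu)$, and satisfies $f_N \le v(0)$ a.e.; Proposition~\ref{minimal} applied to $v$ then gives $P_\tau f_N(y) \le u(t_* + \tau, y)$ for a.e.\ $y$ and all $\tau \in [0, T - t_*)$. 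Integrating in $y$ over $K$ and using Fubini together with the symmetry $p(\tau, y, z) = p(\tau, z, y)$ transforms this to
\begin{equation*}
\int_K (u(t_*, z) \wedge N)\, P_\tau \mathbf{1}_K(z)\,d\mu(z) \;\le\; \int_K u(t_* + \tau, y)\,d\mu(y).
\end{equation*}

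Setting $\tau = t_0 - t_*$ makes the right-hand side equal to the fixed finite quantity $\int_K u(t_0)\,d\mu$. As $t_*$ varies in $(0,T')$, the parameter $\tau$ stays in the compact set $I := [t_0 - T', t_0] \subset (0,\infty)$. The map $(\tau, z) \mapsto P_\tau \mathbf{1}_K(z) = \int_K p(\tau,z,y)\,d\mu(y)$ is continuous on $(0,\infty)\times X$ by continuity of $p$ and dominated convergence, and strictly positive there since $p > 0$ (as noted just after Assumption~\ref{cts-heat-kernel}); hence it attains a strictly positive minimum $c > 0$ on $I \times K$. Dividing by $c$ and letting $N \to \infty$ by monotone convergence yields $\int_K u(t_*)\,d\mu \le c^{-1} \int_K u(t_0)\,d\mu$ uniformly in $t_* \in (0,T')$, which gives the desired bound. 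The main obstacle is precisely this uniform positive lower bound on $P_\tau \mathbf{1}_K$; it is where the strict positivity of the heat kernel (a consequence of Harnack) and the separation of $\tau$ from $0$ (ensured by choosing $t_0$ strictly greater than $T'$) are both essential. Everything else---Fubini, truncation, and monotone convergence---is routine.
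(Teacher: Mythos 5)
Your proof is correct and follows essentially the same route as the paper: shift time, apply Proposition \ref{minimal} with initial data $1_K u(t_*)$ (you truncate to ensure $L^2$ membership), and use continuity and strict positivity of the heat kernel on a time window bounded away from $0$ to extract a uniform constant. The only cosmetic difference is that the paper bounds $\int_K u(t_*)\,d\mu$ by $c^{-1}\essinf_{A} u(T'',\cdot)$ over an auxiliary compact set $A$ of positive measure, with $c=\inf p$ on $[T''-T',T'']\times A\times K$ (which is automatically finite and needs neither a Harnack-chosen time $t_0$ nor the implicit assumption $\mu(K)>0$ behind your positive lower bound on $P_\tau 1_K$), whereas you bound it by $c^{-1}\int_K u(t_0)\,d\mu$.
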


\begin{proof}

  Fix any $0 < T' < T'' < T$.  For any $t \in (0,T')$, let $v(s,x) =
  u(s+t, x)$, so that $v$ is a nonnegative local weak solution on
  $(-t, T-t) \times X$, and $v(0,x) = u(t,x) \ge 1_K(x) u(t,x)$.
  Applying Proposition \ref{minimal} to $v$ with $f(x) = 1_K(x)
  u(t,x)$ gives
  \begin{equation*}
    P_{s}[1_K u(t,\cdot)](x) \le v(s, x) = u(s+t, x), \quad
    \text{$\mu$-a.e. $x$}
  \end{equation*}
  for any $0 \le s < T-t$.  Taking $s = T'' - t$, this reads
  \begin{equation}\label{P-order}
    P_{T'' - t}[1_K u(t,\cdot)](x) \le u(T'', x), \quad \text{$\mu$-a.e. $x$}.
  \end{equation}
  Let $A \subset X$ be any compact set of positive measure, and set
  \begin{equation*}
    c = \inf\{p(s,x,y) : s \in [T'' - T', T''], x \in A, y \in K\}.
  \end{equation*}
  Since we have assumed that the heat kernel $p$ is positive and
  continuous on $(0, \infty) \times X \times X$, we have $c > 0$.
  Then for $\mu$-a.e. $x \in A$ we have
  \begin{align*}
    P_{T'' - t}[1_K u(t, \cdot)](x) &= \int_K u(t,y) p(T'' - t, x,
    y)\,\mu(dy) \\
    &\ge c \int_K u(t,y)\,\mu(dy)
  \end{align*}
  and so, combining this with (\ref{P-order})
  \begin{equation*}
    \int_{K} u(t, y)
    \mu(dy) \le \frac{1}{c} \essinf_{x \in A} u(T'',
    x).
  \end{equation*}
  The right side is finite and
  independent of $t \in (0,T')$, so the proof is complete.
\end{proof}

\section{Widder's theorem}\label{widder-sec}

In this section we prove our main result.

\begin{theorem}[Widder's theorem, local version]\label{widder-local}
  Let $U \subset X$ be open, connected, nice and precompact, and
  suppose $u$ is a nonnegative local weak solution on $(0,T) \times W$ for some
  open neighborhood $W$ of $\closure{U}$.  Then, there exists a unique
  positive Radon measure $\nu$ supported on $U$, and a unique
  nonnegative local weak solution $h$ on $(-\infty, T) \times U$ with
  $h(t,\cdot) = 0$ for $t \le 0$, such that
  \begin{equation}
    u(t,x) = P_t \nu(x) + h(t,x), \quad t \in (0,T).
  \end{equation}
\end{theorem}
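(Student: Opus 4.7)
The plan is to adapt the classical Widder construction. The measure $\nu$ will be extracted as a subsequential weak limit of the positive Radon measures $\mu_t := u(t,\cdot)\mathbf{1}_{\closure{U}}\mu$ as $t\downarrow 0$ (then restricted to $U$, to absorb any mass on $\partial U$ into $h$); the correction $h$ will be $u-P_t\nu$ on $(0,T)\times U$, extended by zero for $t\le 0$; and the extension principle (Lemma \ref{extension}) will then give that $h$ is a local weak solution on $(-\infty,T)\times U$. Uniqueness of $\nu$ will follow from Lemma \ref{Pt-weak-lim-zero}.

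For the construction of $\nu$, fix a nice open precompact set $V$ with $\closure{U}\subset V\subset\closure{V}\subset W$. Adapting the proof of Proposition \ref{u-local-L1-bounded} to the restricted Dirichlet form $(\mathcal{E},\mathbb{D}(V))$ and its continuous, strictly positive heat kernel yields the uniform $L^1$ bound $\sup_{0<t<T'}\int_{\closure{U}}u(t,\cdot)\,d\mu<\infty$ for every $T'<T$. Since $\{\mu_t\}$ is then bounded in total variation and supported on the fixed compact set $\closure{U}$, Banach--Alaoglu supplies a sequence $t_n\downarrow 0$ and a finite positive Radon measure on $\closure{U}$, whose restriction to $U$ is the candidate $\nu$.

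The key inequality $P_s\nu(x)\le u(s,x)$ on $(0,T)\times U$ is the crucial and hardest step. Time-shifting, $v_n(\sigma,\cdot):=u(\sigma+t_n,\cdot)$ is a nonnegative local weak solution on $(-t_n,T-t_n)\times W$, with $v_n(0,\cdot)\ge f_n:=\mathbf{1}_{\closure{U}}u(t_n,\cdot)$. The ``$U'$-step'' of the proof of Proposition \ref{minimal} applies with any nice open precompact $U'\subset W$ to give $P^{U'}_\sigma f_n\le v_n(\sigma,\cdot)$ on $U'$; letting $U'$ exhaust $W$ yields $P^W_\sigma f_n\le v_n(\sigma,\cdot)$ on $W$. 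Upgrading the Dirichlet semigroup $P^W_\sigma$ to the global $P_\sigma$ on $U$ is the principal obstacle of the proof: one needs to show that the discrepancy $P_\sigma f_n-P^W_\sigma f_n$, which represents Hunt-process paths that exit $W$ and return, is ultimately dominated by $u-P^W_\sigma f_n$, so that the full $P_\sigma f_n\le u$ on $U$ follows. Passing $n\to\infty$ using Lemma \ref{Pt-weak-cts} on the left and the continuity of $u$ afforded by the parabolic Harnack inequality on the right then yields $P_s\nu\le u$.

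Define $h(t,\cdot):=u(t,\cdot)-P_t\nu$ for $t\in(0,T)$ and $h(t,\cdot):=0$ for $t\le 0$. By the previous step $h\ge 0$, and by Lemma \ref{Pt-nu-solution} together with linearity $h$ is a nonnegative local weak solution on $(0,T)\times U$. To invoke Lemma \ref{extension}, one checks: for every nice cutoff $\psi$ supported in $U$, $\psi h\in L^2([0,T];\mathbb{D})$ and $\psi h(t)\to 0$ weakly in $L^2(X,\mu)$ as $t\downarrow 0$. Boundedness of $h$ up to $t=0$ follows from the parabolic Harnack inequality applied to $h$ (once the $L^1_{\loc}$ vanishing $\int_K h(t_n)\,d\mu\to 0$ is in hand), and the Caccioppoli inequality (Lemma \ref{caccioppoli}) then supplies the $L^2([0,T];\mathbb{D})$-bound; the weak $L^2$ convergence comes from $\psi u(t)\,\mu\to\psi\,d\nu$ (along $t_n$ by construction, promoted to the full limit via uniqueness) and $\psi P_t\nu\,\mu\to\psi\,d\nu$ (by Lemma \ref{Pt-weak-lim-zero}). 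Uniqueness: if $u=P_t\nu_1+h_1=P_t\nu_2+h_2$, then $P_t(\nu_1-\nu_2)=h_2-h_1$, and testing against $\phi\in C_c(U)$ and letting $t\downarrow 0$, the right side vanishes while the left tends to $\int\phi\,d(\nu_1-\nu_2)$ by Lemma \ref{Pt-weak-lim-zero}, forcing $\nu_1=\nu_2$ and hence $h_1=h_2$.
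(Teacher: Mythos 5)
Your outline reproduces the skeleton of the paper's argument (uniform $L^1$ bound on $\overline{U}$ plus weak-$*$ compactness to produce $\nu$, the comparison $P_t\nu\le u$ to get $h\ge 0$, an extension of $h$ across $t=0$, and uniqueness via Lemma \ref{Pt-weak-lim-zero}), but it leaves exactly the crux unproven: you obtain only the Dirichlet-in-$W$ inequality $P^{W}_\sigma f_n\le u$ and explicitly defer the upgrade to the global semigroup as ``the principal obstacle.'' That step cannot be closed along the route you sketch from a purely local-in-space hypothesis: for $X=\mathbb{R}$ with the classical form, $W=(-2,2)$, $U=(-1,1)$ and $u(t,x)=\mathbb{P}_x(\tau_W>t)$, any admissible decomposition forces $\nu$ to be Lebesgue measure on $U$, and then $P_t\nu(0)>u(t,0)$ for large $t<T$, so the return-path discrepancy $P_\sigma f_n-P^{W}_\sigma f_n$ is \emph{not} dominated by $u-P^{W}_\sigma f_n$ in general. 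The paper obtains the inequality differently: it compares $u(t)$ directly with $P_{t-\epsilon}[\mathbf{1}_U u(\epsilon)]$ by applying Proposition \ref{minimal} to the time-shifted solution, and then passes to the limit $\epsilon_n\downarrow 0$ with Lemma \ref{Pt-weak-cts}; note that Proposition \ref{minimal} and Proposition \ref{u-local-L1-bounded} (which the paper also uses for your $L^1$ bound, whereas your substitute requires continuity and positivity of the kernel of $(\mathcal{E},\mathbb{D}(V))$, neither of which is established here) are stated for solutions defined on all of $X$, which is how Theorem \ref{widder-local} is actually invoked in the proof of Theorem \ref{widder-global}. So the missing inequality is not a deferrable technicality; it is the point where the global-in-space information must enter.

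The second genuine gap is the passage of $h$ across $t=0$. You propose to apply Lemma \ref{extension} to $h$ itself, bounding $h$ near $t=0$ ``by the parabolic Harnack inequality applied to $h$''; but Assumption \ref{harnack} only provides constants $C(U,K,a,b,c,d)$ for cylinders with $a>0$ fixed, with no uniformity as $a\downarrow 0$, so it cannot bound $h$ on $(0,b]\times K$ unless $h$ is already known to be a solution on a time interval reaching below $0$ --- which is precisely what is being proved; the argument is circular. Similarly, the weak $L^2$ convergence $\psi h(t)\to 0$ does not follow from the two measure-level limits you cite when $\nu$ is singular, since neither $u(t)$ nor $P_t\nu$ need converge (or even stay bounded) in $L^2$ individually. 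The paper sidesteps both issues by never extending $h$ directly: each approximant $h_\epsilon(t)=u(t)-P_{t-\epsilon}[\mathbf{1}_U u(\epsilon)]$ is shown to be a nonnegative local weak solution on all of $(-\infty,T)\times U$ (Lemma \ref{Pt-lws}, strong $L^2$-continuity of $P_t$ at $t=\epsilon$, then Lemma \ref{extension}), the Harnack inequality is applied to the $h_\epsilon$ on cylinders straddling $t=0$ with constants independent of $\epsilon$ because $h_\epsilon\le u$, and Lemma \ref{weak-soln-limit} then transfers the solution property to the pointwise limit $h=u-P_t\nu$. If you rework your argument around this approximation scheme (and use Proposition \ref{minimal} for the comparison), the remaining pieces of your proposal --- the compactness construction of $\nu$ and the uniqueness argument via Lemma \ref{Pt-weak-lim-zero} --- do match the paper's proof.
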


\begin{proof}
  For $\epsilon > 0$, set
  \begin{equation*}
    h_\epsilon(t) = 
    \begin{cases}
      u(t) - P_{t-\epsilon} [ 1_U u(\epsilon)], & \epsilon < t < T \\
      0, & -\infty < t \le \epsilon.
    \end{cases}
  \end{equation*}
  Since $1_U u(\epsilon) \in L^2(X,\mu)$, $h_\epsilon$ is a local weak
  solution on $(\epsilon, T) \times U$ (Lemma \ref{Pt-lws}).  By
  Proposition \ref{minimal} (shifting time by $\epsilon$), we also
  have $h_\epsilon \ge 0$ on $(-\infty, T) \times U$.  

  For any nice cutoff function $\psi$ supported inside $U$,
  we have $\psi u \in L^2([\epsilon, T-\epsilon]; \mathbb{D})$ by
  definition of local weak solution.  We also have $P_t [1_U
  u(\epsilon)] \in L^2([\epsilon, T-\epsilon]; \mathbb{D})$ by Lemma
  \ref{Pt-lws}, so the same holds for $\psi P_t[1_U u(\epsilon)]$.
  And by strong continuity of the heat semigroup $P_t$, we have $\psi
  h_{\epsilon}(t) \to 0$ in $L^2(X,\mu)$ as $t \to \epsilon$.  So by Lemma
  \ref{extension}, $h_\epsilon$ is a local weak solution on $(-\infty, T)
  \times U$.

  Now by Proposition \ref{u-local-L1-bounded} with $K = \closure{U}$, we have that
  $1_U u(\epsilon)$ is bounded in $L^1$ norm as $\epsilon \downarrow 0$, or
  equivalently, that the Radon measures $1_U u(\epsilon) d\mu$ are bounded
  in total variation.  Hence by compactness, there is a sequence
  $\epsilon_n \downarrow 0$ and a positive Radon measure $\nu$ with
  $1_U u(\epsilon_n) d\mu \to d\nu$ weakly.  By Lemma
  \ref{Pt-weak-cts}, we have $P_{t - \epsilon_n} [1_U u(\epsilon_n)](x)
  \to P_t \nu(x)$ pointwise.  Thus $h_{\epsilon_n}(t,x) \to h(t,x) =
  u(t,x) - P_t \nu(x)$ pointwise, where we take $h(t,x) = 0$ for $t
  \le 0$.

  We now apply the parabolic Harnack inequality (Assumption
  \ref{harnack}).  Fix $-\infty < a < 0 < b < c < d < T$, and $V$ open and
  precompact with $\closure{V} \subset U$.  By the Harnack
  inequality, for each $h_{\epsilon}$ we have
  \begin{equation*}
    \esssup_{[a,b] \times \closure{V}} h_{\epsilon} \le C
    \essinf_{[c,d] \times \closure{V}} h_\epsilon \le C \essinf_{[c,d]
    \times \closure{V}} u
  \end{equation*}
  since $h_\epsilon \le u$.  Since the bound is independent of
  $\epsilon$, we can apply Lemma \ref{weak-soln-limit} to find that $h
  = \lim_{n \to \infty} h_{\epsilon_n}$ is a local weak solution on
  $(a,b) \times V$, and hence (since $a,b,V$ were arbitrary) on
  $(-\infty, T) \times U$.  This completes the proof of existence.

  To show uniqueness of $\nu$ and $h$, fix $f \in C_c(U)$.  Lemma
  \ref{Pt-weak-lim-zero} says that we have $\int_X  P_t \nu(x) f(x)\,
  \mu(dx) \to \int_X f\,d\nu$ as $t \to 0$.  Since $h$ is a local weak
  solution on $(-\infty, T) \times U$, $h$ is continuous in
  $L^2_\loc(U)$, and since $h$ vanishes for $t \le 0$, we have $\int_X
  h(t,x) f(x)\,\mu(dx) \to 0$.  Thus $\int_X u(t,x) f(x) \,\mu(dx) \to
  \int f \,d\nu$, which shows that $\nu$, and therefore also $h$, is
  uniquely determined by $u$.  In fact, we have shown that $u(t) \to
  \nu$ weakly on $U$ (i.e. in the weak-* topology of $C_c(U)^*$).

\end{proof}

\begin{theorem}[Widder's theorem, global version]\label{widder-global}
   Let $u$ be a nonnegative local weak solution on $(0,T) \times X$.
  There exists a unique positive Radon measure $\nu$ (possibly
  infinite), and a unique nonnegative local weak solution $h$ on
  $(-\infty, T) \times X$ with $h(t,x) = 0$ for $t \le 0$, such that
  \begin{equation}\label{widder-global-eq}
    u(t,x) = P_t \nu(x) + h(t,x), \quad t \in (0,T).
  \end{equation}
\end{theorem}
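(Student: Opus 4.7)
The plan is to exhaust $X$ by nice precompact sets, apply Theorem \ref{widder-local} on each, and glue the resulting data. By Assumption \ref{union-of-nice} fix an increasing sequence $U_n \uparrow X$ of open, precompact, connected, nice sets. Theorem \ref{widder-local} applied to $u$ on $(0,T) \times U_n$ produces a finite positive Radon measure $\nu_n$ supported in $U_n$ and a nonnegative local weak solution $h_n$ on $(-\infty, T) \times U_n$ with $h_n \equiv 0$ for $t \le 0$, such that $u = P_t \nu_n + h_n$ on $(0,T) \times U_n$. The uniqueness part of the local theorem characterizes $\nu_n$ by $\int f \, d\nu_n = \lim_{t \downarrow 0} \int u(t) f \, d\mu$ for $f \in C_c(U_n)$; applying this to $f \in C_c(U_m) \subset C_c(U_n)$ for $m \le n$ yields $\nu_n|_{U_m} = \nu_m$. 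The $\nu_n$ therefore patch to a single positive Radon measure $\nu$ on $X$, finite on compact sets but possibly infinite on $X$.

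Define $h(t,x) := u(t,x) - P_t \nu(x)$ for $t \in (0,T)$ and $h \equiv 0$ for $t \le 0$. Since $\nu_n \uparrow \nu$, monotone convergence gives $P_t \nu_n(x) \uparrow P_t \nu(x)$ pointwise, and $h_n \ge 0$ on $U_n$ yields $P_t \nu_n \le u$ there; passing to the limit, $P_t \nu \le u$ on $(0,T) \times X$, so $P_t \nu$ is finite, $h \ge 0$, and $h_n \to h$ pointwise on $(-\infty, T) \times X$ (each fixed $x$ lies eventually in $U_n$).

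The crux is that $h$ is a local weak solution on $(-\infty, T) \times X$. Any compact $K \subset (-\infty, T) \times X$ lies in some $[a,b] \times K_1$ with $b < T$ and $K_1 \subset U_m$; pick a nice precompact open $V$ with $K_1 \subset V$ and $\closure{V} \subset U_m$, and fix $c,d$ with $b < c < d < T$. For $n \ge m$, Assumption \ref{harnack}, applied to $h_n$ viewed (after a time translation) as a nonnegative local weak solution on $(-\infty, T) \times U_n$, gives
\begin{equation*}
  \esssup_{[a,b] \times \closure{V}} h_n \;\le\; C\, \essinf_{[c,d] \times \closure{V}} h_n \;\le\; C\, \essinf_{[c,d] \times \closure{V}} u,
\end{equation*}
with constant $C = C(U_m, \closure{V}, a, b, c, d)$ independent of $n \ge m$. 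Hence $\{h_n\}_{n \ge m}$ is a uniformly bounded sequence of nonnegative local weak solutions on $(a,b) \times V$ converging pointwise to $h$, and the time-translated form of Lemma \ref{weak-soln-limit} shows that $h$ is a local weak solution on $(a,b) \times V \supset K$. Since $K$ was arbitrary, $h$ is a local weak solution on $(-\infty, T) \times X$.

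Uniqueness repeats the argument of Theorem \ref{widder-local}: for any other representation $u = P_t \nu' + h'$ and any $f \in C_c(X)$ (supported in some $U_n$), Lemma \ref{Pt-weak-lim-zero} gives $\int P_t \nu'(x) f(x) \, \mu(dx) \to \int f \, d\nu'$, while the $L^2_{\loc}$-continuity of $h'$ at $t=0$ combined with $h'(0, \cdot) = 0$ gives $\int h'(t,x) f(x) \, \mu(dx) \to 0$, forcing $\int f\,d\nu' = \lim_{t \downarrow 0} \int u(t) f \, d\mu = \int f\,d\nu$, so $\nu' = \nu$ and $h' = h$. I expect the main technical obstacle to be the uniform Harnack bound on $h_n$ on an interval containing $t=0$: $u$ itself can blow up as $t \downarrow 0$, but because $h_n$ has been extended by zero to negative times inside the local theorem, Harnack can reach from an interval $[a,b]$ containing negative times to the late-time $\essinf$ of $u$, producing a bound independent of $n$ that never evaluates $u$ near the initial time.
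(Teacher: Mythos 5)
Your existence argument is essentially the paper's: exhaust by nice precompact sets, use the weak-$*$ characterization of $\nu_n$ from the local theorem to get consistency, pass to the increasing limit $\nu$ by monotone convergence, and upgrade the pointwise limit $h_n \to h$ to a local weak solution via the Harnack bound on a fixed domain $U_m$ (independent of $n$) together with Lemma \ref{weak-soln-limit}; this part is fine, including your observation that the Harnack rectangle $[a,b]$ may straddle $t=0$ because the $h_n$ vanish for $t\le 0$.

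The uniqueness step, however, has a genuine gap. You invoke Lemma \ref{Pt-weak-lim-zero} to claim $\int_X P_t\nu'(x) f(x)\,\mu(dx) \to \int_X f\,d\nu'$, but that lemma is proved only for \emph{compactly supported} positive Radon measures, whereas in the global theorem the competing measure $\nu'$ may have non-compact support and even infinite total mass. Writing $\int_X f\, P_t\nu'\,d\mu = \int_X P_t f\,d\nu'$, the pointwise bounded convergence $P_t f \to f$ does not yield the limit: there is no $\nu'$-integrable dominating function, and the contribution $\int_{K^c} P_t f\,d\nu'$ from the complement of a compact neighborhood of $\operatorname{supp} f$ (i.e.\ mass flowing in from far away) is exactly the quantity one cannot dismiss a priori --- indeed, controlling it is tantamount to the uniqueness assertion itself. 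What \emph{is} available cheaply is only one inequality: since $\int_X u(t)f\,d\mu \to \int f\,d\nu$ and $\int_X h'(t)f\,d\mu \to 0$, Fatou's lemma applied to $P_t f \to f$ gives $\int f\,d\nu' \le \int f\,d\nu$, hence $\nu' \le \nu$. The paper then closes the argument by setting $\eta = \nu - \nu' \ge 0$, noting $P_t\eta = h'(t) - h(t) \to 0$ in $L^2_{\loc}$, and for each compact $K$ using $0 \le P_t(\eta|_K) \le P_t\eta$ together with Lemma \ref{Pt-weak-lim-zero} applied to the now compactly supported measure $\eta|_K$ to conclude $\eta|_K = 0$, whence $\eta = 0$. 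Your proof needs this (or an equivalent) second step; as written, the asserted convergence for general $\nu'$ is unjustified.
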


\begin{proof}
  Let $U_n$ be an increasing exhaustion of $X$ by open, precompact,
  connected, nice sets, and for each $U_n$ let $u(t) = P_t \nu_n +
  h_n(t)$ be the unique decomposition produced by Theorem
  \ref{widder-local}.  As we previously argued, $u(t) \to \nu_n$
  weakly on $U_n$ as $t \downarrow 0$, and thus for $m > n$, we have
  $\nu_m = \nu_n$ on $U_n$.  In particular, the measures $\nu_n$ are
  increasing.  Their limit $\nu$ is another positive Radon measure,
  possibly infinite, and by monotone convergence we have $P_t \nu_n
  \uparrow P_t \nu$.  Thus $h_n \downarrow h = u - P_t \nu$.  $h$
  remains a nonnegative function which vanishes for $t \le 0$.

  Moreover, if $V$ is any precompact open set, we have $\closure{V}
  \subset U_n$ for sufficiently large $n$.  Fixing $-\infty < a < 0 <
  b < c < d$, we have by the Harnack inequality
  \begin{equation*}
    \esssup_{[a,b] \times \closure{V}} h_n \le C
    \essinf_{[c,d] \times \closure{V}} h_n \le C \essinf_{[c,d]
    \times \closure{V}} u
  \end{equation*}
  Thus applying Lemma \ref{weak-soln-limit}, $h$ is a local weak
  solution on $(a,b) \times V$, and hence on $(-\infty, T) \times X$.

  For uniqueness, suppose we have another decomposition $u(t) = P_t
  \tilde{\nu} + \tilde{h}(t)$.  Let $f \in C_c(X)$ be nonnegative; $f$
  is supported in one of the $U_n$, so as $t \to 0$ we have
  \begin{equation*}
    \int_X u(t,x) f(x)\, \mu(dx) \to \int_{U_n} f(x) \,d\nu_n = \int_X f\,d\nu.
  \end{equation*}
  Thus since $\tilde{h}(t) \to 0$ in $L^2_\loc(X)$, we also have 
  \begin{equation*}
    \int_X P_t f \,d \tilde{\nu} = \int_X P_t \tilde{\nu}(x)
    f(x)\,\mu(dx)  \to \int_X f \,d\nu.
\end{equation*}
  Since $P_t f \to f$ pointwise, Fatou's lemma gives $\int_X
  f\,d\tilde{\nu} \le \int_X f\,d\nu$.  Thus we have $\tilde{\nu} \le
  \nu$, so $\eta = \nu - \tilde{\nu}$ is a positive Radon measure.
  Since $P_t \eta = \tilde{h}(t) - h(t)$, we have that $P_t \eta \to
  0$ in $L^2_\loc(X,\mu)$ as $t \to 0$.  If $K$ is any compact set
  and $\eta|_K$ is the restriction of $\eta$ to $K$, then $0 \le
  P_t \eta|_K \le P_t \eta$, so we also have $P_t \eta|_K \to 0$ in
  $L^2_\loc$.  However, since $\eta|_K$ is a compactly supported Radon
  measure, Lemma \ref{Pt-weak-lim-zero} gives $P_t \eta|_K \to
  \eta|_K$ weakly, so $\eta|_K = 0$.  Letting $K \uparrow X$, we have
  $\eta = 0$, and thus $\nu = \tilde{\nu}$ so the decomposition is
  unique.
\end{proof}

\section{Conditions for uniqueness of nonnegative solutions}\label{unique-sec}

In the classical version of Widder's theorem (for the classical heat
equation on $\R^d$), the function $h$ appearing in
(\ref{widder-global-eq}) is actually zero, and the theorem just states
that $u(t,x) = P_t \nu(x)$.  Thus in the classical case, a nonnegative
solution of the heat equation is uniquely determined by its initial
values.  However, in our general setting, $h$ can certainly
be nonzero.  For example, let $X = (0,\infty)$ be the open half-line,
with the classical Dirichlet form $\mathcal{E}(f,g) = \frac{1}{2} \int_0^\infty f'
g' \,dm$ with its domain $\mathbb{D} = H^1_0((0, \infty))$.  Then
$u(t,x) = \frac{1}{\sqrt{2 \pi t}} e^{-x^2/2t}$ is a local weak
solution, but it is easy to see that the decomposition in
(\ref{widder-global-eq}) must have $\nu = 0$ and $h = u$.  

We record here some conditions that are, or are not, necessary or
sufficient to guarantee that $h=0$.

\begin{enumerate}
  \item The Dirichlet space $(X,\mu, \mathcal{E},\mathbb{D})$ is said
    to be \textbf{conservative} (or \textbf{stochastically complete})
    if $P_t 1 = 1$, or equivalently if the corresponding Hunt process
    $X_t$ has an infinite lifetime, almost surely.  This condition is
    necessary, but not sufficient, to ensure $h=0$.

    To see it is necessary, observe that $v(t,x) = 1 - (P_t 1)(x)$
    satisfies the hypotheses of Lemma \ref{extension}, and hence can
    be regarded as a nonnegative local weak solution on $(-\infty,
    \infty) \times X$.  Applying Theorem \ref{widder-global} to
    $u(t,x) = v(t-1, x)$, we have $v(t-1, x) = P_t \nu(x) + h(t,x)$.
    If $h=0$ then $v(t-1, x) = P_t \nu(x)$, but since this vanishes
    for all $0 \le t \le 1$, we must have $\nu = 0$ and hence $v=0$
    identically.

    To see it is not sufficient, see the next example.

  \item A stronger condition is that the Dirichlet space, or
    equivalently its corresponding Hunt process, be
    \textbf{recurrent}.  This is also not sufficient to ensure $h=0$.
    Consider $X = \R^2 \setminus \{0\}$ with the classical Dirichlet
    form $\mathcal{E}(f,g) = \frac{1}{2} \int \grad f \cdot \grad g \,dm$ and its
    domain $\mathbb{D} = H^1_0(X)$.  Since points are polar for
    Brownian motion in $\R^2$,
    this is a recurrent Dirichlet space.  However, it is not hard
    to see that
    \begin{equation*}
      u(t,x) = \begin{cases} \frac{1}{2 \pi (t-1)} e^{-|x|^2/2(t-1)}, & t > 1
      \\
      0, & t \le 1
      \end{cases}
    \end{equation*}
    is a nonnegative local weak solution on $(0, \infty) \times X$.
    Since it vanishes for $0 \le t \le 1$, its decomposition according
    to Theorem \ref{widder-global} must have $\nu = 0$, so $h$ cannot
    be $0$.
    
    Recurrence is not necessary, as can be seen by considering the
    classical Dirichlet form on $\R^d$, $d \ge 3$.  The fact that $h=0$
    in this case is Widder's original theorem; it is also included in
    the Harnack-type case below.
        

  \item Completeness under an intrinsic metric is not sufficient to
    ensure $h=0$.  See \cite[Section 7.7]{azencott74} for an example
    of a complete two-dimensional Riemannian manifold with unbounded
    negative curvature, such that the Brownian motion explodes in
    finite time with positive probability.  This corresponds to a
    strictly local Dirichlet space which is not conservative.
    
  \item If $X$ is compact, then any local weak solution $u$ on $I
    \times X$ is
    actually a global weak solution, because we can take $K = [a,b]
    \times X$ in Definition \ref{lws-def}.  In particular, we have
    $u(t) \in \mathbb{D}$ for all $t$.  So we can apply the maximum
    principle (Theorem \ref{max-principle}) to $h$ and immediately
    conclude that $h=0$.  In fact, when $X$ is compact, many of the
    arguments in this paper become much simpler.

\item Under the basic assumptions made in Section
\ref{assumptions-sec}, if we assume further that either (ID2) holds
locally uniformly or that (H-$\beta$) holds locally uniformly then we
can follow the elegant argument of \cite{koranyi-taylor}.  A
nonnegative local weak solution $u$ is said to be \textbf{minimal} if
the only local weak solutions $v$ satisfying $0 \le v \le u$ are of
the form $v = \lambda u$; the Choquet representation theorem says that
any solution has an integral representation in terms of minimal
solutions.  Suppose, then, that $h$ is a nonnegative local weak
solution which vanishes for $t \le 0$; it has a representation in
terms of minimal solutions $\tilde{h}$ that also vanish for $t \le 0$.
However, it follows from the locally uniform parabolic Harnack
inequality that for sufficiently small $\epsilon$, we have
$\tilde{h}(t-\epsilon, x) \le H_0 \tilde{h}(t,x)$; thus by minimality
$\tilde{h}(t-\epsilon, x) = \lambda \tilde{h}(t,x)$ and we conclude
that $\tilde{h}$ vanishes for $t \le \epsilon$.  By iteration,
$\tilde{h}$ vanishes everywhere, and so the same must be true of $h$.

In fact, \cite{koranyi-taylor} proves the much stronger statement that 
nonnegative minimal weak solutions $u$ of (\ref{heat-eqn}) on
$(-\infty,T)\times X$ are in fact of the form
$$u(t,x)=e^{\alpha t}v(x)$$
where $v$ is a nonnegative minimal weak solution of $Lv=\alpha v$ on $X$.

\end{enumerate}

The question of whether a nonnegative solution of the heat equation is
uniquely determined by its initial values has been studied by many
authors in various settings.  In addition to \cite{widder44} and
\cite{aronson68,aronson68add}, we mention 
\cite{ishige-murata, murata-2003, murata-2005,koranyi-taylor}.


\section{An application to projections} In this short section we outline 
what we think is a compelling application of our main result to the study 
of the projection of one Dirichlet space onto another. 

Let $(X_i,d_i,\mu_i,\mathcal E_i,\mathbb D_i)$, $i=1,2$ 
be two Dirichlet spaces satisfying the assumptions of Section 
\ref{assumptions-sec}. Assume further that $(X_i,d_i)$, $i=1,2$ are complete.
We are interested in considering the situation when there exists 
a continuous projection map $\pi: X_1 \rightarrow X_2$ 
with the following properties:
\begin{itemize}
\item[(P1)] For any $x,y,\tilde{x}$ with $x,y\in X_2$, $\tilde{x}\in X_1$ with
 $\pi(\tilde{x})=x$,
$$d_2(x,y)=\min\{d_1(\tilde{x},\tilde{y}): \tilde{y}\in \pi^{-1}(\{y\})\}.$$
\item[(P2)] If $u$ is a local weak solution of (\ref{heat-eqn}) in 
$(0,T)\times U$ on $X_2$ then $v(t,x)=u(t,\pi(x))$ is a local weak solution of
(\ref{heat-eqn}) in $(0,T)\times \tilde{U}$ on $X_1$ where 
$\tilde{U}=\pi^{-1}(U)$.
\item[(P3)] A Borel set $A\subset X_2$ is $\mu_2$-negligible if and only if 
$\pi^{-1}(A)$ is $\mu_1$-negligible.  
\end{itemize}
If $B_i(x,r)$ denotes the ball of radius $r$ around $x$ in $X_i$, $i=1,2$, 
then (P1) implies that $$\pi^{-1}(B_2(x,r))\supset B_1(\tilde{x},r)$$ 
and that 
$$\pi(B_1(\tilde{x},r))\subset B_2(x,r)$$
for any $\tilde{x}$ such that $\pi(\tilde{x})=x$. 

Condition (P3) implies that we can disintegrate $\mu_1$ with respect to $\mu_2$; 
that is, there exists a family of measures $\nu^\pi_z$ on $X_1$, $z\in X_2$,  
with $\nu^\pi_z$ supported in $\pi^{-1}(z)$ and such that
\begin{equation}
\int_{X_1}f(x)\,d\mu_1(x)=\int_{X_2}\int_{X_1}f(x)\,d\nu^\pi_z(x)\,d\mu_2(z)
\end{equation}
for any nonnegative measurable $f$ on $X_1$. We will assume that 
this disintegration formula has the following continuity property. 
\begin{itemize}
\item[(P4)] For $f\in C_c(X_1)$, the measurable 
compactly supported function
$$z\mapsto \int f\,d\nu^\pi_z$$
admits a continuous version.
\end{itemize}

\begin{theorem} \label{th-proj}
Referring to the setup introduced above, assume 
that {\em (P1), (P2), (P3)} and {\em (P4)} hold true and that 
$(X_1,d_1,\mu_1,\mathcal E_1,\mathbb D_1)$ 
satisfies the parabolic 
Harnack inequality {\em (H-$\beta$)} of {\em Section \ref{example-sec}}, 
locally uniformly. Then the same is true for 
$(X_2,d_2,\mu_2,\mathcal E_2,\mathbb D_2)$. Furthermore, the 
two heat kernels are related by
$$p_2(t,x,y)= \int p_1(t,\tilde{x},z)\,d\nu^\pi_{y}(z)$$
where $\tilde{x}$ is such that $\pi(\tilde{x})=x$. 
\end{theorem}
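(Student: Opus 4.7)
The plan is to lift problems on $X_2$ to $X_1$ via $\pi$ and invoke the hypotheses on $X_1$. Given a nonnegative local weak solution $u$ on $(0, r^\beta) \times B_2(x, r)$ in $X_2$, property (P2) yields that $v(t, \tilde{x}) := u(t, \pi(\tilde{x}))$ is a nonnegative local weak solution on $(0, r^\beta) \times \pi^{-1}(B_2(x, r))$ in $X_1$; property (P1) makes $\pi$ a $1$-Lipschitz surjection with $B_1(\tilde{x}, r) \subset \pi^{-1}(B_2(x, r))$ and $\pi(B_1(\tilde{x}, r/2)) = B_2(x, r/2)$ for any $\tilde{x} \in \pi^{-1}(x)$. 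Applying the locally uniform (H-$\beta$) on $X_1$ to $v$ on $B_1(\tilde{x}, r)$ for $r$ below the uniform scale $r_0$ gives $\esssup_{Q_-^{X_1}} v \le H_0 \essinf_{Q_+^{X_1}} v$. The H\"older regularity of $v$ on $X_1$ (a consequence of (H-$\beta$)) descends via the lifting in (P1) to a continuous representative of $u$ on $X_2$, so that essential and pointwise extrema agree on the open cylinders; combined with $\pi(B_1(\tilde{x}, r/2)) = B_2(x, r/2)$ and $v = u \circ \pi$, this yields $\esssup_{Q_\pm^{X_1}} v = \esssup_{Q_\pm^{X_2}} u$ and analogously for the infima, so (H-$\beta$) transfers to $X_2$ locally uniformly.

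For the kernel identity, I fix a nonnegative $f \in C_c(X_2)$ and let $u(t, y) = P_t^2 f(y)$, a bounded nonnegative local weak solution on $(0, \infty) \times X_2$ (Lemmas \ref{Pt-lws} and \ref{Pt-nu-solution}). Then $v(t, \tilde{x}) = u(t, \pi(\tilde{x}))$ is a bounded nonnegative local weak solution on $(0, \infty) \times X_1$ by (P2). The global Widder theorem (Theorem \ref{widder-global}) on $X_1$ gives a unique decomposition $v(t, \tilde{x}) = P_t^1 \nu(\tilde{x}) + h(t, \tilde{x})$ with $h$ a nonnegative local weak solution on $(-\infty, \infty) \times X_1$ vanishing for $t \le 0$, and the locally uniform (H-$\beta$) on $X_1$ allows the Kor\'anyi--Taylor argument of Section \ref{unique-sec}(5) to force $h \equiv 0$. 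The proof of Theorem \ref{widder-global} characterizes $\nu$ as the weak-$*$ limit of $v(t)\,d\mu_1$ in $C_c(X_1)^*$; since Assumption \ref{Pt-cts-pointwise} on $X_2$ gives $P_t^2 f(\pi(\tilde{x})) \to f(\pi(\tilde{x}))$ pointwise and $|P_t^2 f| \le \|f\|_\infty$, dominated convergence identifies $\nu = (f \circ \pi)\,d\mu_1$. Disintegrating via (P3) and using that $\nu^\pi_y$ is supported in $\pi^{-1}(y)$ then recasts $v = P_t^1 \nu$ as
\[
P_t^2 f(\pi(\tilde{x})) = \int_{X_2} f(y) \Bigl( \int_{\pi^{-1}(y)} p_1(t, \tilde{x}, \tilde{z})\, d\nu^\pi_y(\tilde{z}) \Bigr) d\mu_2(y).
\]
Comparing with $P_t^2 f(\pi(\tilde{x})) = \int p_2(t, \pi(\tilde{x}), y) f(y)\, d\mu_2(y)$ and letting $f$ range over nonnegative $C_c(X_2)$ yields the stated formula for $\mu_2$-a.e.\ $y$; an approximation-and-monotone-convergence argument using (P4) applied to compactly supported truncations of $p_1(t, \tilde{x}, \cdot)$ shows the right side is continuous in $y$, so the equality extends to every $y$ since $\mu_2$ has full support.

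The main obstacle is the step $h \equiv 0$ in the Widder decomposition, which crucially requires the \emph{locally uniform} form of (H-$\beta$) on $X_1$; the merely local Assumption \ref{harnack} is insufficient for the Kor\'anyi--Taylor iteration. A secondary subtlety in the Harnack transfer is matching essential sups and infs across $\pi$: because $\pi^{-1}(B_2(x, r/2))$ can be strictly larger than $B_1(\tilde{x}, r/2)$, neither direction of comparison between $\esssup_{B_1} v$ and $\esssup_{B_2} u$ follows automatically from (P1)--(P3), so it is important to first promote $v$ and $u$ to continuous representatives using the H\"older regularity on $X_1$ and the lifting in (P1) before the essential extrema can be equated.
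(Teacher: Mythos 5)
Your treatment of the Harnack transfer is fine and in fact more careful than the paper, which dismisses it as ``by inspection'': you correctly identify that (P1) and (P3) alone do not let you compare essential extrema over $B_1(\tilde{x},r/2)$ and $B_2(x,r/2)$ (a null set in $X_1$ need not project to a null set), and that one should first pass to continuous representatives via the H\"older regularity coming from (H-$\beta$) on $X_1$. For the kernel identity you take a genuinely different route: the paper fixes $y$ and applies Theorem \ref{widder-global} (with $h\equiv 0$) directly to the lifted kernel $u_y(t,\tilde z)=p_2(t,\pi(\tilde z),y)$, obtaining a representing measure $\omega_y$ canonically for \emph{every} $y$, and then identifies $\int f\,d\omega_y$ with the (P4)-continuous version of $z\mapsto\int f\,d\nu^\pi_z$ evaluated at $y$, using Assumption \ref{Pt-cts-pointwise} on $X_2$. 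You instead apply Widder to the lift of $P_t^2 f$ for $f\in C_c(X_2)$, identify the representing measure as $(f\circ\pi)\,d\mu_1$, and compare the two integral representations of $P_t^2 f$ against all such $f$. Up to that point your argument is correct, but it only yields the identity
\[
p_2(t,\pi(\tilde x),y)=\int p_1(t,\tilde x,\tilde z)\,d\nu^\pi_y(\tilde z)
\quad\text{for $\mu_2$-a.e.\ } y ,
\]
with the exceptional set a priori depending on $(t,\tilde x)$.

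The final step, upgrading this to every $y$, is where your proof has a genuine gap. First, approximating $p_1(t,\tilde x,\cdot)$ from below by functions in $C_c(X_1)$ and using (P4) only exhibits the right-hand side as an increasing limit of (versions of) continuous functions, hence at best a lower semicontinuous function of $y$; lower semicontinuity plus a.e.\ equality with the continuous function $p_2(t,x,\cdot)$ gives an inequality everywhere, not equality. Second, (P4) only provides continuous versions, i.e.\ functions agreeing with $y\mapsto\int f\,d\nu^\pi_y$ up to a $\mu_2$-null set, so even these approximants are not controlled at an individual $y$. Third, and most fundamentally, the disintegration (P3) determines $\nu^\pi_y$ only for $\mu_2$-a.e.\ $y$: for $y$ in the exceptional set the measure $\nu^\pi_y$ could be modified arbitrarily without violating (P3), so no continuity argument applied to your a.e.\ identity can produce a statement valid at every $y$. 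This is precisely what the paper's per-$y$ argument buys: the Widder measure $\omega_y$ of the lifted kernel is defined for each individual $y$, and the weak-$*$ characterization of $\omega_y$ at time $0$, combined with the disintegration formula, Assumption \ref{Pt-cts-pointwise} on $X_2$, and the continuous version supplied by (P4), pins $\omega_y$ down as (the canonical version of) $\nu^\pi_y$ for every $y$ -- which is the form of the statement needed for the applications, e.g.\ the pointwise covering-space formula $p_2(t,\pi(x),\pi(y))=\sum_{g\in G}p_1(t,x,gy)$. To repair your proof you would essentially have to redo this fixed-$y$ Widder argument.
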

\begin{proof}
The first assertion follows immediately by inspection, using (P1) and (P2)
to lift a local solution on $X_2$ to a local solution on $X_1$.

Consider 
$$(t,\tilde{z})\mapsto u_y(t,\tilde{z})= p_2(t,\pi(\tilde{z}),y).$$ 
This is a nonnegative  
local weak solution of (\ref{heat-eqn}) on $(0,\infty)\times X_1$.
By Theorem \ref{widder-global} (with $h\equiv 0$ because of the validity 
of the local uniform parabolic Harnack inequality) there exists a nonnegative 
Radon measure $\omega_y$  on $X_1$ such that, for all $\tilde{z}\in X_1$ 
and $t>0$,
$$u_y(t,\tilde{z})=\int_{X_1}p_1(t,\tilde{z},\zeta)\,d\omega_y(\zeta)$$
and, for any $f\in C_c(X_1)$, as $t$ tends to $0$,
$$\int_{X_1}u_y(t,\zeta)f(\zeta)\,d\mu_1(\zeta)
=\int_{X_1}p_2(t,\pi(\zeta),y)f(\zeta)\,d\mu_1(\zeta)
\rightarrow \int_{X_1}f(\zeta)\,d\omega_y(\zeta).$$

Now,
$$\int_{X_1}p_2(t,\pi(\zeta),y)f(\zeta)\,d\mu_1(\zeta)
= \int_{X_2}\left(\int_{X_1}f\,d\nu^\pi_{z}\right)p_2(t,z,y)\,d\mu_2(z).$$
Since $z\mapsto \int_{X_1}fd\nu^\pi_z$ admits a continuous version, we 
see that
$$\int_{X_1}p_2(t,\pi(\zeta),y)f(\zeta)\,d\mu_1(\zeta))
\rightarrow \int_{X_1}f\,d\nu^\pi_{y}.$$
In other words, the Radon measure $\omega_y$ is, in fact, equal to $\nu^\pi_y$.
\end{proof}
 
Theorem \ref{th-proj} is surprising and interesting even in the simplest cases.
Consider for instance the case when $X_1$ and $X_2$ are complete Riemannian 
manifolds, each equipped with its natural Dirichlet space structure, and $\pi$ 
is the projection associated with a countable group $G$ of isometries
acting properly and freely on $X_1$.
The hypotheses (P1)--(P4) are clearly satisfied (the measure $\nu^\pi_z$ is 
the counting measure on the countable set $\pi^{-1}(z)$.) The theorem
says that the heat kernels on $X_1$ and $X_2$ are related by
$$p_2(t,\pi(x),\pi(y))=\sum_{g\in G}p_1(t,x,gy).$$
This statement includes the non-trivial fact that the sum 
on the right hand side is finite.  

Another illustrative  application of Theorem \ref{th-proj} is 
to relate the Gaussian heat kernel of Brownian motion on $\mathbb R^n$ 
to its radial part, the Bessel process, which is associated with an explicit
Dirichlet space on the semi-axis (this requires a proper treatment of the point 
$0$, depending on dimension). See  \cite[page 126]{chen-fukushima-book}. In this case, 
the group action is the action of the rotation group.

The setting of Dirichlet spaces allow us to treat in exactly the same
way the very natural case that arises when $X_1$ and $X_2$ are
polytopal complexes (satisfying mild assumptions, see
\cite{pivarski-saloff-coste-complex, eells-fuglede}) instead of
Riemannian manifolds. See also \cite{bendikov-saloff-coste-diagonal,
bendikov-saloff-coste-invariant} for examples involving the Dirichlet
forms of Section \ref{sec-torus} on $\mathbb R^k\times\mathbb
T^\infty$.

Theorem \ref{th-proj} can be applied in a wide variety of contexts where 
the projection $\pi$ is associated to the proper continuous action of a 
locally compact group $G$ on $X_1$ that preserves the distance $d_1$.
See for instance \cite{bssw-strip-complex}, especially Corollary 4.6
and Section 6, for descriptions of
concrete examples.

We end this section by specializing Theorem \ref{th-proj} in the 
context of sub-Riemannian diffusions on unimodular groups.

\begin{theorem} Let $G$ be a unimodular Lie group equipped with its 
Haar measure and a family $\{X_1,\dots,X_k\}$ of left invariant vector fields
generating the Lie algebra of $G$. Let $H$ be a closed subgroup of $G$ 
equipped with its Haar measure. Let
$\pi: G\rightarrow M=H\setminus G$ be the projection on the quotient space $M$
of right-cosets.
Equip $M$  with its natural $G$-invariant measure. 
Let $ L_G= \sum_1^k X_i^2$ and $L_M=\sum_1^k [d\pi(X_i)]^2$ be the 
associated hypoelliptic sub-Laplacians on $G$ and $M$, respectively. 
Then the heat kernels on $M$ and $G$ are related by
$$p_M(t,x,y)=\int_H p_G(g_x,hg_y) d_Hh$$
where $\pi(g_x)=x$, $\pi(g_y)=y$ and $d_Hh$ is the Haar measure on $H$. 
In particular, the right-hand side is finite.
\end{theorem}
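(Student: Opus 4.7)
The plan is to apply Theorem~\ref{th-proj} with $X_1 = G$ and $X_2 = M = H\setminus G$, each carrying its natural sub-Riemannian Dirichlet space structure: the Carnot--Carathéodory distance, Haar (resp.\ quotient-Haar) measure, and the Dirichlet form associated to $L_G$ (resp.\ $L_M$). Once the hypotheses of that theorem are verified, its conclusion reads
\[
p_M(t,x,y) \;=\; \int_{X_1} p_G(t,g_x,\zeta)\,d\nu^\pi_y(\zeta) \;=\; \int_H p_G(t,g_x,hg_y)\,d_H h,
\]
provided $\nu^\pi_y$ is identified, via $h \mapsto hg_y$, with Haar measure pushed onto the fibre $\pi^{-1}(y) = Hg_y$.

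First I would verify that $(G,\mathcal{E}_G,\mathbb{D}_G)$ satisfies (H-$\beta$) locally uniformly, and that both spaces satisfy Assumption~\ref{dirichlet-space-ass}. For a left-invariant hypoelliptic sub-Laplacian generated by left-invariant vector fields on a unimodular Lie group, local uniformity in (ID2) is automatic: left invariance forces doubling and Poincaré constants for CC-balls to be independent of the centre, and Theorem~\ref{th-H2} then supplies (H-$\beta$) with $\beta = 2$ locally uniformly (classical results of Nagel--Stein--Wainger, Varopoulos, Saloff-Coste, Jerison). The corresponding basic properties on $M$ descend from $G$ via $\pi$.

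The core of the work is checking (P1)--(P4). For (P1), I would identify the CC-distance on $M$ defined by the projected vector fields $d\pi(X_i)$ with the quotient distance $\inf_{h\in H} d_G(g_x,hg_y)$: horizontal curves lift and project between $G$ and $M$, and the infimum is attained because $H$ is closed, the left $H$-action is proper, and closed CC-balls in $G$ are compact. For (P2), the $X_i$ are $H$-left-invariant hence projectable, and $L_G(u\circ\pi) = (L_M u)\circ\pi$, so the pullback of a local weak solution on $M$ is a local weak solution on $G$; translating this to the Dirichlet-form formulation is routine since test functions pull back along $\pi$ and the energies are compatible. Property (P3) is Weil's integration formula
\[
\int_G f\,d\mu_G \;=\; \int_M\!\Bigl(\int_H f(hg_z)\,d_Hh\Bigr)\,d\mu_M(z),
\]
valid under the unimodularity hypotheses implicit in the existence of the quoted $G$-invariant measure on $M$; this simultaneously yields (P3) and identifies $\nu^\pi_z$ with the push of $d_Hh$ to $Hg_z$. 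Property (P4) reduces to continuity of $z \mapsto \int_H f(hg_z)\,d_Hh$ for $f \in C_c(G)$, which I would prove by choosing a continuous local section of the principal $H$-bundle $\pi\colon G\to M$ and applying dominated convergence, using properness of the $H$-action to bound the support of $h \mapsto f(hg_z)$ uniformly as $z$ ranges over a compact neighbourhood.

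The step I expect to be delicate is (P4), precisely because $H$ need not be compact: obtaining the claimed continuity demands a careful uniform control of the $h$-integrand as $z$ moves. Once (P1)--(P4) are in hand, Theorem~\ref{th-proj} applies directly, yielding both the stated identity and the finiteness of the right-hand side.
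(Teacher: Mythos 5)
Your proposal is correct and follows exactly the route the paper intends: the theorem is stated there as a direct specialization of Theorem~\ref{th-proj} (with no further proof given), and your verification of the locally uniform Harnack inequality on $G$ and of (P1)--(P4), together with the identification of $\nu^\pi_z$ with the push-forward of Haar measure on $H$ to the fibre $Hg_z$ via Weil's formula, is precisely the implicit argument. The point you flag as delicate, (P4), is in fact the standard continuity step in the proof of Weil's formula (for $f\in C_c(G)$ and $g$ in a compact set $C$, the integrand $h\mapsto f(hg)$ is supported in the compact set $(\operatorname{supp} f)C^{-1}\cap H$, so uniform continuity suffices), so no extra work beyond your sketch is needed.
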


For background information regarding the setting of this theorem, see
\cite{purplebook, maheux-heat-kernel}.

\section{Acknowledgements}

The authors wish to thank Benjamin Steinhurst for many helpful
discussions.  We would also like to thank the anonymous referee for a
very careful reading of the paper which led to numerous corrections
and improvements.

N.~Eldredge was partially supported by National Science
Foundation grant DMS-0739164.  L.~Saloff-Coste was partially supported
by National Science Foundation grant DMS-1004771.

\appendix

\section{Energy measures}\label{app-energy-measure}

Let $(\mathcal{E}, \mathbb{D})$ be a regular, strictly local Dirichlet
form on $L^2(X,\mu)$.

Recall that $\mathbb{D} \cap L^\infty(X,\mu)$ is an algebra, and for
$f,g \in \mathbb{D} \cap L^\infty$ we have
\begin{equation*}
  \sqrt{\mathcal{E}(fg)} \le \norm{f}_\infty
  \sqrt{\mathcal{E}(g)} + \norm{g}_\infty
  \sqrt{\mathcal{E}(f)}.
\end{equation*}
(See \cite[Theorem 1.4.2 (ii)]{fukushima-et-al}.)

For $f \in \mathbb{D} \cap L^\infty$, we can define a Radon measure
$\Gamma(f)$ on $X$ by taking
\begin{equation}\label{gamma-def}
  \int_X \phi d\Gamma(f) := 2 \mathcal{E}(\phi f, f) -
  \mathcal{E}(f^2, \phi)
\end{equation}
for $\phi \in C_c(X) \cap \mathbb{D}$.  (In the classical case,
$d\Gamma(f) = |\grad f|^2 dm$, where $m$ is Lebesgue measure.)

To see that (\ref{gamma-def}) in fact defines a Radon measure, i.e. a
continuous linear functional on $C_c(X)$, set
\begin{equation*}
  \alpha_t(\phi) := \frac{1}{t}[2 \inner{f \phi}{P_t f - f} -
    \inner{P_t(f^2) - f^2}{\phi}]
\end{equation*}
and note that
\begin{equation*}
  |\alpha_t(\phi)| \le \frac{1}{t} [2 |\inner{f}{P_t f - f}| + ||P_t
    (f^2) - f^2||_{L^1}] ||\phi||_\infty
\end{equation*}
where $||P_t (f^2) - f^2||_{L^1} < \infty$ because $f \in L^2$, so
$f^2 \in L^1$, and $P_t$ is a contraction on $L^1$ (which follows from
the Markovian property).  So $\alpha_t$ is a bounded linear functional
on $C_c(X)$, and as $t \to 0$, $\alpha_t(\phi) \to 2 \mathcal{E}(\phi
f, f) - \mathcal{E}(f^2, \phi)$.  By the uniform boundedness
principle, a pointwise limit of bounded linear functionals is another
bounded linear functional.

One may then define the signed measure $\Gamma(f,g) =
\frac{1}{2}(\Gamma(f+g) - \Gamma(f) - \Gamma(g))$ by polarization,
where $\Gamma(f,f) = \Gamma(f)$.

Note that for $f,g \in \mathbb{D} \cap L^\infty$, the integral $\int_X
f \,d\Gamma(g)$ needs some care to be well-defined, since $f$ is
technically only defined up to $\mu$-null sets, which $\Gamma(g)$ may
charge.  However, a quasi-continuous $\mu$-version $\tilde{f}$ of $f$
is uniquely defined up to polar sets, which $\Gamma(g)$ does not
charge.  So $\int_X f\,d\Gamma(g)$ should be interpreted as $\int_X
\tilde{f}\,d\Gamma(g)$.

Note that \cite{fukushima-classic} discusses energy measures for
additive functionals, but this is a generalization of the energy
measure of a function.  Note also in the strictly local case we have,
in the notation of \cite{fukushima-classic}, 
$\mathcal{E}^{\mathrm{res}} = \mathcal{E}$.

Some properties of $\Gamma$ which we shall use:

\begin{proposition}[{\cite[Lemma 5.4.2]{fukushima-classic}}]\label{energy-leibniz}
  For $f,g,h \in \mathbb{D} \cap L^\infty$, $d\Gamma(fg,h) = f
  d\Gamma(g,h) + g d\Gamma(f,h)$.
\end{proposition}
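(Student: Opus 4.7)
The plan is to reduce the Leibniz rule to a ``chain rule for squares'' and then derive the general Leibniz rule from it by polarization. The key identity to establish first is
\begin{equation*}
d\Gamma(f^2, h) = 2 f\, d\Gamma(f, h), \qquad f, h \in \mathbb{D} \cap L^\infty.
\end{equation*}
(Here $f$ is to be understood via its quasi-continuous version, which is well-defined since the energy measure charges no set of zero capacity.) Granting this, the general Leibniz rule follows by the standard polarization trick. Writing $(f+g)^2 = f^2 + 2fg + g^2$ and using the bilinearity of $\Gamma$ in each slot, the squares identity applied to $f+g$ gives
\begin{equation*}
d\Gamma((f+g)^2, h) = 2(f+g)\bigl[d\Gamma(f,h) + d\Gamma(g,h)\bigr],
\end{equation*}
while direct expansion gives
\begin{equation*}
d\Gamma((f+g)^2, h) = 2f\,d\Gamma(f,h) + 2\,d\Gamma(fg, h) + 2g\,d\Gamma(g,h).
\end{equation*}
Subtracting and dividing by $2$ yields $d\Gamma(fg, h) = f\,d\Gamma(g,h) + g\,d\Gamma(f,h)$, as required.

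To prove the squares identity, I would appeal to the more general chain rule
\begin{equation*}
d\Gamma(\Phi(f), h) = \Phi'(f)\, d\Gamma(f, h)
\end{equation*}
valid for $\Phi \in C^1(\mathbb{R})$ with $\Phi(0) = 0$ and bounded derivative, and $f, h \in \mathbb{D} \cap L^\infty$. One first checks that $\Phi(f) \in \mathbb{D} \cap L^\infty$ because, after normalizing by the Lipschitz constant, $\Phi$ is a normal contraction (this is the Markovian property of $\mathcal{E}$). The chain rule then can be established by first verifying it for polynomials via an induction argument using the definition of $\Gamma$, and then extending to all $C^1$ functions by uniform approximation of $\Phi$ and $\Phi'$ on a compact interval containing the essential range of $f$, using the bound $\mathcal{E}(\Phi(f) - \Psi(f)) \le \|\Phi' - \Psi'\|_\infty^2\, \mathcal{E}(f)$ which itself follows from the chain rule once known. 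Applying with $\Phi(x) = x^2$ (truncated smoothly outside $[-\|f\|_\infty - 1, \|f\|_\infty + 1]$) gives the squares identity. This is the approach laid out in detail in Fukushima's classical article and in Chapter 3 of \cite{fukushima-classic}.

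The main obstacle is the chain rule, and more specifically the fact that it depends crucially on the \emph{strict locality} of $(\mathcal{E}, \mathbb{D})$. Without strict locality, the Beurling--Deny decomposition of $\mathcal{E}$ may contain a nontrivial jump part, and for such a contribution the chain rule fails (one obtains instead $\Gamma(\Phi(f), h)$ expressed via a difference-quotient of $\Phi$ rather than $\Phi'(f)$). Consequently, establishing the chain rule amounts to using strict locality to rule out the jump part of the Beurling--Deny decomposition and to show that the ``diffusion'' contribution satisfies the desired $C^1$ functional calculus; this is the technical heart of the argument and is carried out in \cite[\S 3]{fukushima-classic}. Given that the entire infrastructure of energy measures developed in that paper assumes (and exploits) strict locality, citing this result is the efficient route.
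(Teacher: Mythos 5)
Your proposal is correct, and it is in essence the same route as the paper's: the paper gives no proof of this proposition but simply cites it as Lemma 5.4.2 of Fukushima's classic work, and your argument likewise defers the technical core (the $C^1$ chain rule $d\Gamma(\Phi(f),h)=\Phi'(f)\,d\Gamma(f,h)$, which indeed requires strict locality) to that same source, prefacing it with a correct and standard polarization step that recovers the Leibniz rule from the squares identity $d\Gamma(f^2,h)=2f\,d\Gamma(f,h)$ together with bilinearity and the algebra property of $\mathbb{D}\cap L^\infty$. One small remark: the estimate $\mathcal{E}(\Phi(f)-\Psi(f))\le \|\Phi'-\Psi'\|_\infty^2\,\mathcal{E}(f)$ used in your approximation step follows directly from the Markovian (normal contraction) property, since $(\Phi-\Psi)/\|\Phi'-\Psi'\|_\infty$ is a normal contraction vanishing at $0$; it does not need the chain rule ``once known,'' and presenting it that way would make the extension argument appear circular when it is not.
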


\begin{proposition}[{\cite[Lemma 5.4.3]{fukushima-classic}}]\label{energy-cauchy-schwarz}
For $f,g,h,k \in \mathbb{D} \cap L^\infty$, we have the
Cauchy--Schwarz inequality
\begin{equation*}
  \left(\int_X |fg|\, d|\Gamma(h,k)|\right)^2 \le \int_X f^2 \,d\Gamma(h)
  \int_X g^2 \,d\Gamma(k).
\end{equation*}
\end{proposition}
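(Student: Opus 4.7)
The plan is to reduce the stated inequality to the ordinary Cauchy--Schwarz inequality in $L^2$ of a single dominating measure, by first establishing a set-wise Cauchy--Schwarz for the energy measures and then upgrading it to a pointwise bound on Radon--Nikodym densities.

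First, I would establish the set-wise Cauchy--Schwarz inequality
\[
|\Gamma(h,k)(E)|\le \sqrt{\Gamma(h)(E)\,\Gamma(k)(E)}
\]
for every Borel set $E\subset X$. This is immediate from polarization: for $\lambda\in\mathbb R$, the identity
\[
\Gamma(h+\lambda k)=\Gamma(h)+2\lambda\,\Gamma(h,k)+\lambda^2\,\Gamma(k)
\]
exhibits the left side as a nonnegative Radon measure (since $\Gamma$ applied to a single function in $\mathbb D\cap L^\infty$ is a positive measure). Evaluating at $E$ produces a nonnegative quadratic polynomial in $\lambda$, whose discriminant must be nonpositive, which is exactly the claimed inequality.

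Next, I would convert this to a pointwise density estimate. Let $\nu=\Gamma(h)+\Gamma(k)+|\Gamma(h,k)|$, which dominates all three measures, and write $\alpha=d\Gamma(h)/d\nu$, $\beta=d\Gamma(k)/d\nu$, $\gamma=d\Gamma(h,k)/d\nu$, with $\alpha,\beta\ge 0$ and $\gamma$ real. The claim is that $\gamma^2\le \alpha\beta$ holds $\nu$-a.e. Granting this, the rest is a single application of ordinary Cauchy--Schwarz in $L^2(\nu)$:
\begin{align*}
\int_X |fg|\, d|\Gamma(h,k)|
&=\int_X |fg|\,|\gamma|\,d\nu
\le \int_X \bigl(|f|\sqrt{\alpha}\bigr)\bigl(|g|\sqrt{\beta}\bigr)\,d\nu \\
&\le \left(\int_X f^2\alpha\,d\nu\right)^{1/2}\!\left(\int_X g^2\beta\,d\nu\right)^{1/2}
=\left(\int_X f^2\,d\Gamma(h)\right)^{1/2}\!\left(\int_X g^2\,d\Gamma(k)\right)^{1/2},
\end{align*}
and squaring both sides gives the proposition.

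The main obstacle is justifying the pointwise bound $\gamma^2\le \alpha\beta$ $\nu$-a.e.\ from the set-wise inequality. The standard approach is by contradiction: if $\gamma^2>\alpha\beta$ on a set of positive $\nu$-measure, one refines to a Borel set $F$ of positive measure on which $\gamma$ has constant sign and on which $|\gamma|\ge p$, $\alpha\beta\le q$ for rationals with $p^2>q$. Applying the set-wise Cauchy--Schwarz to Borel subsets $E\subset F$ and invoking Lebesgue differentiation of $\alpha,\beta,\gamma$ with respect to $\nu$ (at a $\nu$-density point of $F$) produces a contradiction. This density argument is classical and is the same technique used to show that a positive matrix-valued measure admits pointwise Cauchy--Schwarz bounds on its entries; once it is in place, the rest of the proof is essentially a one-line application of ordinary Cauchy--Schwarz.
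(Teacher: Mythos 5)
The paper does not actually prove this proposition; it is quoted directly from Fukushima's book (Lemma 5.4.3 there), so there is no in-paper argument to compare against. Your overall strategy is the standard one and is essentially correct: the identity $\Gamma(h+\lambda k)=\Gamma(h)+2\lambda\Gamma(h,k)+\lambda^2\Gamma(k)$ is legitimate (the defining functional $\phi\mapsto \mathcal{E}(\phi f,g)+\mathcal{E}(\phi g,f)-\mathcal{E}(fg,\phi)$ is bilinear in $(f,g)$ and agrees with the polarized $\Gamma(f,g)$), all the measures involved are finite since $f,g,h,k\in\mathbb{D}\cap L^\infty$, and once you have the pointwise density bound $\gamma^2\le\alpha\beta$ $\nu$-a.e., the identification $d|\Gamma(h,k)|=|\gamma|\,d\nu$ and ordinary Cauchy--Schwarz in $L^2(\nu)$ finish the proof exactly as you wrote.

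The one step I would not accept as written is the appeal to ``Lebesgue differentiation of $\alpha,\beta,\gamma$ with respect to $\nu$ at a $\nu$-density point.'' The differentiation theorem for arbitrary Radon measures is a Besicovitch-type result; it holds in $\R^n$ but can fail on a general locally compact separable metric space, which is the setting here, so this tool is not available in the stated generality. Fortunately you do not need it: for each fixed rational $\lambda$, the measure $\Gamma(h+\lambda k)=\Gamma(h)+2\lambda\Gamma(h,k)+\lambda^2\Gamma(k)$ is nonnegative and absolutely continuous with respect to $\nu$, so its density $\alpha+2\lambda\gamma+\lambda^2\beta$ is $\ge 0$ $\nu$-a.e.; intersecting the exceptional sets over the countably many rational $\lambda$ and using continuity in $\lambda$ gives $\gamma^2\le\alpha\beta$ $\nu$-a.e.\ directly, with no covering or density-point argument at all. (Alternatively, one can bypass densities entirely: by a Hahn decomposition $E=(E\cap P)\cup(E\cap N)$ and the set-wise inequality on each piece, $|\Gamma(h,k)|(E)\le\sqrt{\Gamma(h)(E)\Gamma(k)(E)}$, and then approximate $|f|,|g|$ by simple functions over a common partition.) With that replacement your proof is complete and matches the classical argument behind the cited lemma.
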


We remark that using the AM-GM inequality, we have the useful form
\begin{equation}\label{energy-amgm}
  \abs{\int_X fg \,d\Gamma(h,k)} \le \int_X |fg| \,d|\Gamma(h,k)| 
  \le \frac{1}{2} \left(\int_X f^2 \,d\Gamma(h)
  +\int_X g^2 \,d\Gamma(k)\right).
\end{equation}
The latter form will be more useful to us.

\begin{corollary}\label{energy-product-bound}
  For $f, g \in \mathbb{D} \cap L^\infty$, 
  \begin{equation*}
    d\Gamma(fg) \le 2 (f^2 d\Gamma(g) + g^2 d\Gamma(f)).
  \end{equation*}
\end{corollary}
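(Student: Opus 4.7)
The plan is to expand $d\Gamma(fg)$ using the Leibniz rule (Proposition \ref{energy-leibniz}) and then control the resulting cross term by the AM--GM consequence of the energy-measure Cauchy--Schwarz inequality (\ref{energy-amgm}). More precisely, I would first apply Proposition \ref{energy-leibniz} twice — once in each argument of $\Gamma(fg, fg)$, using the symmetry of $\Gamma$ — to obtain the pointwise identity
\begin{equation*}
d\Gamma(fg) = f^2\, d\Gamma(g) + 2 fg\, d\Gamma(f,g) + g^2\, d\Gamma(f).
\end{equation*}

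The desired bound then reduces to showing that the signed measure $2 fg\, d\Gamma(f,g)$ is dominated by $f^2\, d\Gamma(g) + g^2\, d\Gamma(f)$. To establish this at the level of measures, I would test against an arbitrary nonnegative $\phi \in C_c(X) \cap \mathbb{D}$ and rewrite $\int \phi\, (2fg)\, d\Gamma(f,g) = 2\int (\sqrt{\phi}\, g)(\sqrt{\phi}\, f)\, d\Gamma(f,g)$. Applying (\ref{energy-amgm}) with $h = f$, $k = g$ and the replacements $f \mapsto \sqrt{\phi}\, g$, $g \mapsto \sqrt{\phi}\, f$ yields
\begin{equation*}
\left| 2 \int \phi\, fg\, d\Gamma(f,g) \right| \le \int \phi\, g^2\, d\Gamma(f) + \int \phi\, f^2\, d\Gamma(g).
\end{equation*}
Combining this with the Leibniz expansion and using the positivity of $\phi$ gives
\begin{equation*}
\int \phi\, d\Gamma(fg) \le 2 \int \phi\, f^2\, d\Gamma(g) + 2 \int \phi\, g^2\, d\Gamma(f)
\end{equation*}
for all nonnegative $\phi \in C_c(X) \cap \mathbb{D}$. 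Since $\mathcal{E}$ is regular, such $\phi$ are dense in $C_c(X)_+$ in the uniform topology on any compact set, which lets me promote the integrated inequality to the measure inequality $d\Gamma(fg) \le 2(f^2\, d\Gamma(g) + g^2\, d\Gamma(f))$.

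The one step that requires a little care is the handling of $fg$ and $f^2, g^2$ as functions against the energy measures: strictly, these should be interpreted through quasi-continuous versions, since $\Gamma(f), \Gamma(g), \Gamma(f,g)$ may charge $\mu$-null sets. This is the standard convention discussed in the appendix, and once it is adopted the Leibniz rule and Cauchy--Schwarz inequality are applied verbatim. The only real mild obstacle is the sign issue in the cross term $2fg\, d\Gamma(f,g)$, which is why I route through $|\cdot|$ and AM--GM rather than trying to compare signed measures directly; the density/regularity step at the end is then automatic.
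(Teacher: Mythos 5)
Your proposal is correct and follows essentially the same route as the paper's proof: expand $\int \phi\, d\Gamma(fg)$ by the Leibniz rule against a nonnegative test function, then control the cross term by writing $\phi f g = (f\sqrt{\phi})(g\sqrt{\phi})$ and applying the Cauchy--Schwarz/AM--GM bound (\ref{energy-amgm}). The only difference is cosmetic (the paper works directly with the integrated identity rather than stating a pointwise measure identity first), so there is nothing to add.
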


\begin{proof}
  Fix $h \in \mathbb{D} \cap C_c(X)$ with $h \ge 0$. By repeated
  application of the Leibniz rule (Proposition \ref{energy-leibniz}),
  \begin{align*}
    \int_X h \,d\Gamma(fg) &= \int_X h f^2\, d\Gamma(g) + \int_X h g^2
    \,d\Gamma(f) + 2 \int_X hfg\,d\Gamma(f,g). 
  \end{align*}
  Writing $hfg$ as $(f \sqrt{h})(g \sqrt{h})$ and applying
  (\ref{energy-amgm}), we have what we want.
\end{proof}

\section{Cutoff Sobolev inequalities}\label{cutoff-sobolev}

In \cite{bbk06}, the notion of a (local) cutoff Sobolev inequality is
defined.  Here $X$ is assumed to be a strictly local Dirichlet space.
$X$ was also assumed to be a metric space; we write $d$ for the metric
and $B(x,r)$ for the open metric balls.  Note that $d$ is not assumed
to be an intrinsic distance in the sense of the previous section.

\begin{definition}[{\cite{bbk06}}]\label{cutoff-sobolev-def}
  $X$ is said to satisfy a \textbf{local cutoff Sobolev inequality}
  $\mathrm{CS}(\beta)_{\mathrm{loc}}$ if there exists $\theta \in
  (0,1]$ and constants $c_1, c_2$ such that for every $x_0 \in X$, $0 <
  R \le 1$, there exists a cutoff function $\psi$ with the properties:
  \begin{enumerate}
  \item $\psi \ge 1$ on $B(x_0, R/2)$;\label{cutoff-sobolev-1}
  \item $\psi = 0$ on $B(x_0, R)^c$;\label{cutoff-sobolev-0}
  \item $|\psi(x) - \psi(y)| \le c_1 (d(x,y)/R)^\theta$ for all $x,y
    \in X$; \label{cutoff-sobolev-holder}
  \item \label{cutoff-sobolev-energy} For any ball $B(x,s)$ with $0 \le s \le R$ and $f \in
  \mathbb{D}$,
  \begin{equation}\label{cutoff-sobolev-energy-eqn}
    \int_{B(x,s)} f^2\,d\Gamma(\psi) \le c_2 (s/R)^{2\theta}
    \left(\int_{B(x,2s)} d\Gamma(f) + s^\beta \int_{B(x,2s)}
    f^2\,d\mu \right). 
  \end{equation}
  \end{enumerate}
\end{definition}

Note that by replacing $\psi$ with $\bar{\psi} = \psi \wedge 1 \vee
0$, we can assume $0 \le \psi \le 1$.  (If $\psi$ satisfies the above
condition, so does $\bar{\psi}$: we have $|\bar{\psi}(x) -
\bar{\psi}(y)| \le |\psi(x) - \psi(y)|$, and the Markovian properties
of $(\mathcal{E},\mathbb{D})$ implies $d\Gamma(\bar{\psi})
\le d\Gamma(\psi)$.)

We remark that if $\psi$ satisfies (\ref{cutoff-sobolev-energy-eqn}),
then for any $f \in \mathbb{D}$, we have $f \psi \in \mathbb{D}$.  We
clearly have $f \psi \in L^2(X,\mu)$ since $\psi$ is bounded.  If we
take $s=R$ in (\ref{cutoff-sobolev-energy-eqn}), then the left side
becomes $\int_X f^2 d\Gamma(\psi)$ and the right side is
controlled by $\mathcal{E}_1(f)$.  If $f \in \mathbb{D} \cap
L^\infty$ we can apply Corollary \ref{energy-product-bound} and see
that $\mathcal{E}_1(f \psi)$ is controlled by $\mathcal{E}_1(f)$.
Since by the Markovian property, $\mathbb{D} \cap L^\infty$ is
$\mathcal{E}_1$-dense in $\mathbb{D}$, an approximation argument shows
$f \psi \in \mathbb{D}$ for any $f \in \mathbb{D}$.

 \begin{lemma}
   If $X$ satisfies $\mathrm{CS}(\beta)_{\mathrm{loc}}$, then all open
  sets in $X$ are nice, and $X$ satisfies Assumption \ref{union-of-nice}.
\end{lemma}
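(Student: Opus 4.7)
The plan is to produce, for any compact $K \subset U$, a nice cutoff function by patching together finitely many local cutoffs supplied by $\mathrm{CS}(\beta)_{\mathrm{loc}}$. Fix such $K$ and $U$. For each $x \in K$, local compactness of $X$ together with openness of $U$ let us choose $R_x \in (0,1]$ such that $\overline{B(x, R_x)}$ is compact and contained in $U$. Applying $\mathrm{CS}(\beta)_{\mathrm{loc}}$ at $x_0 = x$ with this $R_x$ and replacing the resulting function by $0 \vee \psi_x \wedge 1$ (which preserves all four properties of Definition \ref{cutoff-sobolev-def}, since the Markovian inequality gives $d\Gamma(0 \vee \psi_x \wedge 1) \le d\Gamma(\psi_x)$), we obtain $\psi_x$ satisfying $0 \le \psi_x \le 1$, $\psi_x = 1$ on $B(x, R_x/2)$, and $\psi_x = 0$ outside $B(x, R_x)$.

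By compactness, select finitely many points $x_1,\dots,x_n \in K$ such that the half-balls $B(x_i, R_{x_i}/2)$ cover $K$. Writing $\psi_i = \psi_{x_i}$, define
\[
  \psi := 1 - \prod_{i=1}^n (1 - \psi_i).
\]
Then $0 \le \psi \le 1$, and $\psi = 1$ on $K$ because each $z \in K$ lies in some $B(x_i, R_{x_i}/2)$, where $\psi_i(z) = 1$. Moreover, $\operatorname{supp} \psi \subseteq \bigcup_{i=1}^n \overline{B(x_i, R_{x_i})}$ is a finite union of compact subsets of $U$, hence is itself compact and contained in $U$. This verifies properties (\ref{cutoff-1-on-K}) and (\ref{cutoff-inside-U}) of Definition \ref{cutoff-def}.

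For property (\ref{cutoff-preserve-domain}), we must show that $f \psi \in \mathbb{D}$ whenever $f \in \mathbb{D}$. Set $g_0 = f$ and inductively $g_k = g_{k-1}(1 - \psi_k) = g_{k-1} - \psi_k g_{k-1}$. The remark following Definition \ref{cutoff-sobolev-def}, applied to $\psi_k$, guarantees that $g_{k-1} \in \mathbb{D}$ implies $\psi_k g_{k-1} \in \mathbb{D}$, hence $g_k \in \mathbb{D}$. After $n$ steps, $f\prod_{i=1}^n(1 - \psi_i) = g_n \in \mathbb{D}$, and so $f \psi = f - g_n \in \mathbb{D}$. Thus $U$ is nice. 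Since this argument applies to every open $U \subset X$, Assumption \ref{union-of-nice} reduces to producing an exhaustion of $X$ by open, precompact, connected sets; the existence of such an exhaustion is a purely topological consequence of Assumption \ref{dirichlet-space-ass} and was already carried out as item 2 in the discussion following Assumption \ref{union-of-nice}.

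The main obstacle is property (\ref{cutoff-preserve-domain}); the rest follows rather directly from local compactness of $X$ and the shape of $\mathrm{CS}(\beta)_{\mathrm{loc}}$. The iterative argument here works cleanly because the key implication ``$g \in \mathbb{D} \Rightarrow \psi_k g \in \mathbb{D}$'' in the remark holds for \emph{every} $g \in \mathbb{D}$, not merely for $g \in \mathbb{D} \cap L^\infty$, so the possibly unbounded intermediate functions $g_k$ remain in $\mathbb{D}$ at each stage.
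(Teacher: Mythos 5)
Your proof is correct and follows essentially the same route as the paper's: cover $K$ by finitely many half-balls with $\overline{B(x_i,R_i)}\subset U$, take the truncated $\mathrm{CS}(\beta)_{\mathrm{loc}}$ cutoffs $\psi_i$, and invoke the remark that multiplication by each $\psi_i$ maps $\mathbb{D}$ into $\mathbb{D}$. The only (cosmetic) difference is the patching: the paper takes $\psi=\max_i\psi_i$ and checks $f\psi=\max\{f^+\psi_i\}-\max\{f^-\psi_i\}\in\mathbb{D}$ via the lattice property, whereas you take $\psi=1-\prod_i(1-\psi_i)$ and verify membership in $\mathbb{D}$ by the inductive product argument; both are valid.
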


\begin{proof}
  Let $U \subset X$ be open, and $K \subset U$ compact.  By
  compactness of $K$ and local compactness of $X$, we can cover $K$ by
  a finite number of balls $B(x_i, R_i/2)$ such that $R_i \le 1$ and
  $\closure{B(x_i, R_i)} \subset U$.  Let $\psi_i : X \to [0,1]$ be the
  corresponding cutoff functions as provided by Definition
  \ref{cutoff-sobolev-def}.  If $f \in \mathbb{D}$, then we have
  argued that $f \psi_i \in \mathbb{D}$ for each $i$.  Now taking
  $\psi = \max \psi_i$ gives a nice
  cutoff function for $K$ inside $U$, since $f \psi = \max\{f^+ \psi_i\}
  - \max\{f^- \psi_i\} \in \mathbb{D}$ as well.
\end{proof}

\section{Maximum principle}\label{app-max-principle}

In this appendix we give a proof of Theorem \ref{max-principle},
adapted from \cite{grigoryan-hu-off-diagonal}, for local weak solutions.

\begin{lemma}\label{chain-rule}
  Let $u \in C^1([0,T]; L^2(X,\mu))$, and suppose $\Phi \in C^1(\R)$
  satisfies $\Phi(0) = 0$ and $|\Phi'| \le 1$.  Then $\Phi \circ u \in
  C([0,T]; L^2(X,\mu))$, and $\Phi \circ u$ is differentiable with
  $(\Phi \circ u)'(t,x) = \Phi'(u(t,x))u'(t,x)$.
\end{lemma}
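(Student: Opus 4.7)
The plan is to handle continuity and differentiability separately, both exploiting the inequality $|\Phi(a) - \Phi(b)| \le |a-b|$ (which follows from $|\Phi'|\le 1$) and the companion estimate $|\Phi(s)| \le |s|$ (since $\Phi(0)=0$). These immediately give $\Phi \circ u(t) \in L^2(X,\mu)$ for each $t$, and
\[
\norm{\Phi \circ u(t) - \Phi \circ u(s)}_{L^2} \le \norm{u(t) - u(s)}_{L^2},
\]
so continuity in $L^2$ is trivial from the assumed continuity of $u$.

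For differentiability at a fixed $t \in [0,T]$, I would apply the mean value theorem pointwise in $x$: for each $h$ with $t+h \in [0,T]$, choose a measurable function $\xi_h$ with $\xi_h(x)$ lying between $u(t,x)$ and $u(t+h,x)$ such that
\[
\Phi(u(t+h,x)) - \Phi(u(t,x)) = \Phi'(\xi_h(x))(u(t+h,x) - u(t,x)).
\]
Then decompose the error as
\[
\frac{\Phi(u(t+h,x)) - \Phi(u(t,x))}{h} - \Phi'(u(t,x))\,u'(t,x) = A_h(x) + B_h(x),
\]
where
\[
A_h(x) = \Phi'(\xi_h(x))\!\left[\frac{u(t+h,x) - u(t,x)}{h} - u'(t,x)\right], \quad B_h(x) = \bigl[\Phi'(\xi_h(x)) - \Phi'(u(t,x))\bigr] u'(t,x).
\]

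The term $A_h$ is easy: since $|\Phi'(\xi_h)| \le 1$, its $L^2$ norm is dominated by $\bigl\|\tfrac{u(t+h)-u(t)}{h} - u'(t)\bigr\|_{L^2}$, which tends to $0$ as $h \to 0$ because $u \in C^1([0,T]; L^2(X,\mu))$. The main obstacle is the term $B_h$, because we only know $\Phi'$ is continuous (not uniformly continuous or Lipschitz), so we need to pass from almost-everywhere convergence to $L^2$ convergence. The plan is to use the domination $|B_h(x)| \le 2|u'(t,x)| \in L^2$ together with a subsequence argument: given any sequence $h_n \to 0$, we have $u(t+h_n) \to u(t)$ in $L^2$, hence some subsequence converges $\mu$-a.e., forcing $\xi_{h_{n_k}}(x) \to u(t,x)$ a.e. and hence $\Phi'(\xi_{h_{n_k}}(x)) \to \Phi'(u(t,x))$ a.e. by continuity of $\Phi'$. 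Dominated convergence yields $\norm{B_{h_{n_k}}}_{L^2} \to 0$, and the standard ``every subsequence has a further subsequence'' principle upgrades this to $\norm{B_h}_{L^2} \to 0$ as $h \to 0$. Combining the two estimates gives the claimed differentiability with $(\Phi \circ u)'(t) = \Phi'(u(t))\,u'(t)$ in $L^2(X,\mu)$.
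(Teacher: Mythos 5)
Your proof is correct, but it takes a genuinely different route from the paper's. You linearize $\Phi$ via the mean value theorem, writing $\Phi(u(t+h,x))-\Phi(u(t,x))=\Phi'(\xi_h(x))(u(t+h,x)-u(t,x))$, and then control the two error terms by the bound $\abs{\Phi'}\le 1$ (for $A_h$) and by an a.e.-subsequence plus dominated convergence argument (for $B_h$). The paper instead linearizes $u$: it adds and subtracts $\Phi(u(t,x)+\epsilon_n u'(t,x))$, so the main term converges pointwise to $\Phi'(u(t,x))u'(t,x)$ directly from differentiability of $\Phi$ at the fixed value $u(t,x)$ (then dominated convergence, with domination by $\abs{u'(t,x)}$ from the Lipschitz bound), while the remainder is dominated pointwise by $\abs{\tfrac{u(t+\epsilon_n,x)-u(t,x)-\epsilon_n u'(t,x)}{\epsilon_n}}$, which tends to $0$ in $L^2$ by differentiability of $u$. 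The paper's decomposition buys a cleaner argument: no subsequence extraction is needed, and no intermediate point $\xi_h$ appears. Your route has one technical wrinkle you should acknowledge: you must justify that $\xi_h$ can be chosen \emph{measurably} so that $A_h$ and $B_h$ are measurable functions whose $L^2$ norms make sense (only their sum is automatically measurable). This is fixable either by a measurable selection argument (the set of admissible $\xi$ is nonempty and closed since $\Phi'$ is continuous, and depends measurably on $x$) or, more simply, by replacing the mean value theorem with its integral form $\Phi(b)-\Phi(a)=(b-a)\int_0^1\Phi'(a+s(b-a))\,ds$, which makes the analogue of $\Phi'(\xi_h(x))$ manifestly measurable and leaves the rest of your argument (domination by $2\abs{u'(t,x)}$, a.e. convergence along subsequences, dominated convergence, and the subsequence principle) unchanged. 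With that repair your proof is complete; the continuity part coincides with the paper's.
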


\begin{proof}
  Note that $\Phi$ is Lipschitz, so if $f \in L^2$, then $|\Phi
  \circ f| \le |f|$ and so $\Phi \circ f \in L^2$.  Also, if $f,g \in
  L^2$, then $|\Phi \circ f - \Phi \circ g| \le |f-g|$ and so $f
  \mapsto \Phi \circ f$ is a continuous function on $L^2$ (indeed, Lipschitz).  Thus $\Phi
  \circ u \in C([0,T]; L^2)$.
  
  For the derivative, we must show that for each $t \in (0,T)$ and every real sequence
  $\epsilon_n \to 0$,
  \begin{equation}\label{deriv-toshow}
    \lim_{\epsilon_n \to 0} \frac{\Phi(u(t+\epsilon_n, \cdot)) -
    \Phi(u(t,\cdot))}{\epsilon_n} = \Phi'(u(t,\cdot))u'(t,\cdot)
  \end{equation}
   with the convergence in $L^2$.  
   Let us write
   \begin{align*}
     \frac{\Phi(u(t+\epsilon_n, x)) -
    \Phi(u(t,x))}{\epsilon_n} &= \frac{\Phi(u(t,x) + \epsilon_n
    u'(t,x)) - \Phi(u(t,x))}{\epsilon_n} \\ 
     &\quad + \frac{\Phi(u(t+\epsilon_n,
    x)) - \Phi(u(t,x) + \epsilon_n u'(t,x))}{\epsilon_n}.
   \end{align*}
   The first term converges to $\Phi'(u(t,x)) u'(t,x)$ pointwise, and
   since $\Phi$ is Lipschitz we also have 
   \begin{equation*}
     \abs{\frac{\Phi(u(t,x) + \epsilon_n
    u'(t,x)) - \Phi(u(t,x))}{\epsilon_n}} \le |u'(t,x)| \in L^2
   \end{equation*}
   so by dominated convergence, this convergence is also in $L^2$.
   For the second term, we have, again because $\Phi$ is Lipschitz, that
   \begin{equation*}
     \abs{\frac{\Phi(u(t+\epsilon_n,
       x)) - \Phi(u(t,x) + \epsilon_n u'(t,x))}{\epsilon_n}} \le
       \abs{\frac{u(t+\epsilon_n, x) - u(t,x) -
       \epsilon_n u'(t,x)}{\epsilon_n}}
   \end{equation*}
   which goes to $0$ in $L^2$ by differentiability of $u$.
\end{proof}

\begin{notation}
  As in \cite[Proposition 4.11]{grigoryan-hu-off-diagonal}, set
  \begin{align*}
    \varphi(s) = \begin{cases} e^{-s^{-2}}, & s > 0 \\
      0, & s \le 0
      \end{cases}
  \end{align*}
  and $\Phi(s) = \left(\int_{-\infty}^s
  \varphi(\xi)\,d\xi\right)^{1/2}$, so that $\varphi = 2 \Phi \Phi'$.  Then one can
  verify that:
  \begin{itemize}
    \item $\varphi, \Phi \in C^1(\R)$;
    \item $\varphi > 0$ and $\Phi > 0$ on $(0, \infty)$;
    \item $0 \le \varphi' \le 1$ and $0 \le \Phi' \le 1$ on $\R$.
  \end{itemize}
  We remark in particular that for $f \in \mathbb{D}$, $\varphi \circ
  f$ is a normal contraction of $f$, and thus $\mathcal{E}_1(\varphi
  \circ f) \le \mathcal{E}_1(f)$.  Also, it is shown in \cite[Lemma
  4.3]{grigoryan-hu-off-diagonal} that 
  \begin{equation}\label{gh43}
    \mathcal{E}(f, \varphi \circ
  f) \ge \mathcal{E}(\varphi \circ f) \ge 0.
  \end{equation}
\end{notation}

\begin{lemma}\label{Phi-lemma}
For any $v \in W^{1,2}([0,T]; \mathbb{D}, \mathbb{D}^*)$, the function
$t \mapsto \norm{\Phi(v(t, \cdot))}_{L^2}^2$ is absolutely continuous,
and 
\begin{equation*}
  \frac{d}{dt} \norm{\Phi(v(t, \cdot))}_{L^2}^2 
  = (v'(t), \varphi(v(t, \cdot)))_{\mathbb{D}^*, \mathbb{D}}.
\end{equation*}
\end{lemma}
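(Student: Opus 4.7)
The goal is to establish the integral identity
\begin{equation*}
\|\Phi(v(b))\|_{L^2}^2 - \|\Phi(v(a))\|_{L^2}^2 = \int_a^b (v'(t), \varphi(v(t)))_{\mathbb{D}^*, \mathbb{D}}\,dt \qquad (0 \le a \le b \le T).
\end{equation*}
This identity immediately implies absolute continuity of $t \mapsto \|\Phi(v(t))\|_{L^2}^2$ and the pointwise a.e. derivative formula. To make sense of the right-hand side, note that $\varphi$ is a normal contraction with $\varphi(0)=0$ (since $|\varphi'|\le 1$), so the Markovian property gives $\varphi(v(t)) \in \mathbb{D}$ with $\mathcal{E}_1(\varphi(v(t))) \le \mathcal{E}_1(v(t))$. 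Thus $\varphi \circ v \in L^2([0,T]; \mathbb{D})$ and, paired against $v' \in L^2([0,T]; \mathbb{D}^*)$, the integrand lies in $L^1([0,T])$.

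I would first reduce to the smooth case by mollifying $v$ in time. Let $\rho_n$ be a standard mollifier, extend $v$ beyond $[0,T]$ by reflection, and set $v_n = \rho_n * v$. Then $v_n \in C^\infty([0,T]; \mathbb{D}) \subset C^1([0,T]; L^2(X,\mu))$, and as $n \to \infty$ we have $v_n \to v$ in $L^2([0,T]; \mathbb{D})$, $v_n' \to v'$ in $L^2([0,T]; \mathbb{D}^*)$, and, using the Sobolev embedding $W^{1,2} \hookrightarrow C([0,T]; L^2)$ cited in the text, $v_n \to v$ in $C([0,T]; L^2(X,\mu))$.

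For each fixed $n$, Lemma \ref{chain-rule} applied to $\Phi$ gives $\Phi \circ v_n \in C^1([0,T]; L^2)$ with $(\Phi \circ v_n)'(t) = \Phi'(v_n(t))\, v_n'(t)$; note this lies in $L^2$ because $|\Phi'|\le 1$. The elementary $L^2$ product rule for $C^1([0,T]; L^2)$-valued functions then yields
\begin{equation*}
\frac{d}{dt} \|\Phi(v_n(t))\|_{L^2}^2 = 2\bigl(\Phi(v_n(t)),\, \Phi'(v_n(t)) v_n'(t)\bigr)_{L^2} = (v_n'(t), \varphi(v_n(t)))_{L^2},
\end{equation*}
where the last equality uses $\varphi = 2\Phi\Phi'$. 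Integrating and noting that $v_n'(t) \in L^2(X,\mu) \subset \mathbb{D}^*$ makes the $L^2$ inner product agree with the $\mathbb{D}^*, \mathbb{D}$ pairing, we obtain the desired identity for every $v_n$.

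Finally, I would pass to the limit. The left side converges uniformly in $a,b$ because $\Phi$ is Lipschitz and $v_n \to v$ in $C([0,T]; L^2)$, so $\Phi(v_n) \to \Phi(v)$ in $C([0,T]; L^2)$. For the right side, $v_n' \to v'$ strongly in $L^2([0,T]; \mathbb{D}^*)$; meanwhile $\{\varphi(v_n)\}$ is bounded in $L^2([0,T]; \mathbb{D})$ (by the normal-contraction estimate), and $\varphi(v_n) \to \varphi(v)$ in $L^2([0,T]; L^2)$ by the Lipschitz property of $\varphi$. Any weak cluster point of $\varphi(v_n)$ in $L^2([0,T]; \mathbb{D})$ must coincide with $\varphi(v)$ by uniqueness in the weaker topology, so $\varphi(v_n) \rightharpoonup \varphi(v)$ weakly in $L^2([0,T]; \mathbb{D})$. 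Strong-weak duality of the Gelfand-triple pairing then yields convergence of the right-hand integrals, completing the proof. The main technical point requiring care is the weak convergence in $L^2([0,T]; \mathbb{D})$ and the identification of its limit; everything else is a routine combination of time-mollification with the chain rule of Lemma \ref{chain-rule}.
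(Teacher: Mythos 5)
Your proposal is correct, and its skeleton is the same as the paper's: prove the identity first for $v$ smooth in time, using Lemma \ref{chain-rule} together with the $L^2$ product rule and $\varphi = 2\Phi\Phi'$, and then approximate a general $v \in W^{1,2}$ by smooth functions (the paper simply quotes density of $C^1([0,T];\mathbb{D})$ in $W^{1,2}$, whereas you make the mollification/reflection explicit). The genuine difference is in the limit passage for the term $\int (v_n'(t), \varphi(v_n(t)))_{\mathbb{D}^*,\mathbb{D}}\,dt$. The paper argues pointwise in $t$: along a subsequence $v_n(t) \to v(t)$ in $\mathcal{E}_1$-norm and $v_n'(t) \to v'(t)$ in $\mathbb{D}^*$ for a.e.\ $t$, then the weak continuity of normal contractions (Theorem 1.4.2(v) of \cite{fukushima-et-al}) gives $\varphi(v_n(t)) \to \varphi(v(t))$ $\mathcal{E}_1$-weakly for a.e.\ $t$, and a variant of dominated convergence with dominating functions $\norm{v_n'(t)}_{\mathbb{D}^*}\sqrt{\mathcal{E}_1(v_n(t))}$ (convergent in $L^1([0,T])$) closes the argument. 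You instead argue globally in time: the normal-contraction bound $\mathcal{E}_1(\varphi(v_n(t))) \le \mathcal{E}_1(v_n(t))$ gives boundedness of $\varphi(v_n)$ in $L^2([0,T];\mathbb{D})$, strong convergence $\varphi(v_n) \to \varphi(v)$ in $L^2([0,T]\times X)$ identifies every weak cluster point, hence $\varphi(v_n) \rightharpoonup \varphi(v)$ weakly in $L^2([0,T];\mathbb{D})$, and pairing this against the strongly convergent $v_n' \to v'$ in $L^2([0,T];\mathbb{D}^*)$ yields the limit (for the integral over $[a,b]$ one just restricts, or pairs against $1_{[a,b]}v_n'$). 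Your route avoids the citation to the $\mathcal{E}_1$-weak continuity theorem and the a.e.-subsequence bookkeeping, essentially reproving that fact at the level of the time-integrated spaces, and it delivers the fundamental-theorem-of-calculus identity on $[a,b]$ directly rather than the paper's weak-in-$t$ formulation against $\chi \in C_c^\infty((0,T))$; both conclusions give absolute continuity plus the a.e.\ derivative, and both proofs rest on the same inputs ($\varphi$ is a normal contraction vanishing at $0$, $\varphi = 2\Phi\Phi'$, and the Gelfand-triple identification of the pairing with the $L^2$ inner product).
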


\begin{proof}
  Suppose first that $v \in C^1([0,T]; \mathbb{D})$.  We have by
  the product rule
 and Lemma \ref{chain-rule} that
  \begin{align*}
    \frac{d}{dt} \norm{\Phi(v(t, \cdot))}_{L^2}^2 &= 2 \int_X
    \Phi'(v(t,x)) v'(t,x) \Phi(v(t,x))\,\mu(dx) \\ 
    &= \int_X v'(t,x) \varphi(v(t,x))\,\mu(dx) = (v'(t), \varphi(v(t,
    \cdot)))_{\mathbb{D}^*, \mathbb{D}}.
  \end{align*}
  Thus the lemma holds for such $v$.
  Integrating by parts gives, for any $\chi \in C^\infty_c((0,T))$,
  \begin{equation}\label{Phi-eq}
    \int_0^T \norm{\Phi(v(t, \cdot))}_{L^2}^2 \chi'(t)\,dt = -
    \int_0^T \chi(t) (v'(t), \varphi(v(t, \cdot)))_{\mathbb{D}^*,
    \mathbb{D}}\,dt.
  \end{equation}

    Now suppose $v \in W^{1,2}([0,T]; \mathbb{D}, \mathbb{D}^*)$.  
    Since $C^1([0,T]; \mathbb{D})$ is dense in $W^{1,2}$, we can find a sequence $v_n \in
    C^1([0,T]; \mathbb{D})$ with $v_n \to v$ in $W^{1,2}$, and hence
    also in $C([0,T];
    L^2(X,\mu))$, which is to say $\norm{v(t) - v_n(t)}_{L^2} \to 0$
    uniformly in $t$.  Since $\Phi$ is Lipschitz, we also have
    $\norm{\Phi(v(t,\cdot)) - \Phi(v_n(t,\cdot))}_{L^2} \to 0$
    uniformly in $t$, and it follows that
    \begin{equation*}
      \int_0^T \norm{\Phi(v_n(t,\cdot))}_{L^2}^2 \chi'(t)\,dt \to 
      \int_0^T \norm{\Phi(v(t,\cdot))}_{L^2}^2 \chi'(t)\,dt.
    \end{equation*}

    We also have $v_n' \to v'$ in $L^2([0,T];\mathbb{D}^*)$ and $v_n
    \to v$ in $L^2([0,T]; \mathbb{D})$.  In particular, passing to a
    subsequence, we have $v_n(t) \to v(t)$ in $\mathcal{E}_1^{1/2}$-norm and
    $v_n'(t) \to v'(t)$ in $\mathbb{D}^*$-norm for
    almost every $t$.  Now by \cite[Theorem 1.4.2
    (v)]{fukushima-et-al}, we have $\varphi(v_n(t,\cdot)) \to
    \varphi(v(t,\cdot))$ $\mathcal{E}_1$-weakly for almost every $t$.
    Thus $(v_n'(t), \varphi(v_n(t, \cdot)))_{\mathbb{D}^*, \mathbb{D}}
    \to (v'(t), \varphi(v(t,\cdot)))_{\mathbb{D}^*, \mathbb{D}}$ for
    almost every $t$.  Now since $\varphi(v_n(t,\cdot))$ is a normal
    contraction of $v_n(t, \cdot)$, we have
    \begin{align*}
      \abs{(v_n'(t), \varphi(v_n(t, \cdot)))_{\mathbb{D}^*,
      \mathbb{D}}}
      \le \norm{v_n'(t)}_{\mathbb{D}^*}
      \sqrt{\mathcal{E}_1(\varphi(v_n(t, \cdot))) }
      \le
      \norm{v_n'(t)}_{\mathbb{D}^*} 
	   \sqrt{\mathcal{E}_1(v_n(t))}.
    \end{align*}
    Since $v_n \to v$ in $W^{1,2}$, it follows that the expression on
    the right side converges in $L^1([0,T])$.  Thus by a variant of
    the dominated convergence theorem, we have
    \begin{equation*}
       \int_0^T \chi(t) (v_n'(t), \varphi(v_n(t, \cdot)))_{\mathbb{D}^*,
     \mathbb{D}}\,dt \to \int_0^T \chi(t) (v'(t), \varphi(v(t, \cdot)))_{\mathbb{D}^*,
     \mathbb{D}}\,dt.
    \end{equation*}
    By passing to the limit, we have shown that (\ref{Phi-eq})
    holds for all $v \in W^{1,2}([0,T]; \mathbb{D}, \mathbb{D}^*)$,
    which implies the desired result.
\end{proof}




\begin{proof}[Proof of Theorem \ref{max-principle}]
  Let $\chi \in C^\infty_c((0,T))$, and set $\phi(t,x) = \chi(t)
  \varphi(u(t,x))$.  Since $\varphi$ is a normal contraction and $\varphi(u) = \varphi(u^+)$, we have
  $\phi(t) \in \mathbb{D}(U)$ for a.e. $t$.  Also, since
  $\mathcal{E}_1(\varphi(u(t))) \le \mathcal{E}_1(u(t))$, we have
  $\phi \in L^2([0,T]; \mathbb{D}(U))$, and $\phi(t)$ vanishes outside
  the support of $\chi$.  Thus (\ref{u-max-weak}) holds for $\phi$;
  that is,
  \begin{equation*}
      \int_0^T \chi(t) (u'(t), \varphi(u(t, \cdot)) )_{\mathbb{D}^*,
      \mathbb{D}}\,dt = -\int_0^T \chi(t) \mathcal{E}(u(t), \varphi(u(t, \cdot)))\,dt.
  \end{equation*}
  Now $\chi$ was arbitrary, so we must have $(u'(t), \varphi(u(t, \cdot)) )_{\mathbb{D}^*,
      \mathbb{D}} = -\mathcal{E}(u(t), \varphi(u(t)))$ for a.e. $t$.
  By Lemma \ref{Phi-lemma} and (\ref{gh43}), if we write $a(t) =
      \norm{\Phi(u(t, \cdot))}_{L^2}^2$, this says $a'(t) \le 0$.
      But $a(0) = 0$ since $u(0) \le 0$, and $a \ge 0$ by definition, so
      we must have $a = 0$ identically.  So we have $\Phi(u(t, \cdot))
      = 0$ a.e.  Since $\Phi(s) > 0$ for all $s > 0$, it must be that $u
      \le 0$ a.e.
\end{proof}

\bibliographystyle{plainnat}
\bibliography{allpapers}
\end{document}